\documentclass[11pt, article]{elsarticle}
\usepackage{lineno}
\modulolinenumbers[1]

%\journal{}

%%%%%%%%%%%%%%%%%%%
% User Define Package
%%%%%%%%%%%%%%%%%%%%
%----------------------------------------
%    Page Dimension
%----------------------------------------
\usepackage{geometry}
\geometry{
	a4paper,
	total={170mm,257mm},
	left=20mm,
	top=20mm,
}
\parskip=.2cm

%----------------------------------------
% Show Page grid line
%----------------------------------------
%\usepackage[grid, gridunit=mm, gridcolor=blue!40, subgridcolor=blue!20]{eso-pic}
%----------------------------------------
 
\usepackage{amsfonts}
\usepackage{amsmath}
\usepackage{amssymb}
\usepackage[mathscr]{eucal}
\usepackage{blkarray}
\usepackage{epsfig}
\usepackage{fullpage}
\usepackage{times}
\usepackage{multirow}
\usepackage{float}
\usepackage[usenames]{color}
\usepackage[dvipsnames]{xcolor}
\usepackage{hyperref} 

\usepackage{xcolor}
\usepackage{framed}
\definecolor{shadecolor}{rgb}{1,0,0}
\newcommand{\fby}{\fcolorbox{blue}{yellow}}

\allowdisplaybreaks

\def\eod{\vrule height 6pt width 5pt depth 0pt}
\newenvironment{proof}{\noindent {\bf Proof:} \hspace{.2em}}
{\hspace*{\fill}{\eod}}

\newcommand{\rk}{\mathrm{rank}}

\newcommand{\VV}{\mathcal{V}_2}
\newcommand{\Max}{\mathrm{Max4PC}}
\newcommand{\Min}{\mathrm{Min4PC}}
\newcommand{\LG}{\mathrm{LG}}

\newcommand{\bone}{\mathbf{1}}

\newcommand{\BB}{\mathfrak{B}}

\newcommand{\DD}{\mathfrak{D}}

\newcommand{\RR}{\mathbb{R}}

\newcommand{\ZZ}{ \mathbb{Z}}
\newcommand{\Tr}{ \mathrm{Trace}}
\newcommand{\charpoly}{ \mathrm{CharPoly}}
\newcommand{\Inertia}{ \mathrm{Inertia}}

\newtheorem{theorem}{Theorem}

\newtheorem{corollary}[theorem]{Corollary}
\newtheorem{conjecture}[theorem]{Conjecture}

\newtheorem{remark}[theorem]{Remark}

\newtheorem{definition}[theorem]{Definition}
\newtheorem{lemma}[theorem]{Lemma}

\newcommand{\comment}[1]{}
\newcommand{\deleted}[1]{}
\newcommand{\red}[1]{\textcolor{red}{#1}}

\newcommand{\blue}[1]{\textcolor{blue}{#1}}

%% `Elsevier LaTeX' style
%\bibliographystyle{elsarticle-num}
\bibliographystyle{abbrv}
%%%%%%%%%%%%%%%%%%%%%%%

\begin{document}
	
	\begin{frontmatter}		
		
		% \title{Unimodality and Peak Location in the  Characteristic Polynomials of Tree-Based Matrices}
		\title{Unimodality and peak location of the  characteristic polynomials of two distance matrices of trees} 
		
% \title{Further extensions of the Graham and Lov{\'a}sz conjecture on the 
% characteristic polynomial of distance matrices}
		\author{Rakesh Jana%\corref{cor1}
		}
		\ead{rjana@math.iitb.ac.in}
		\author{Iswar Mahato}
		\ead{iswar@math.iitb.ac.in}
		\author{Sivaramakrishnan Sivasubramanian}
		\ead{krishnan@math.iitb.ac.in}
		\address{Department of Mathematics, Indian Institute of Technology Bombay, Mumbai 400076, India}
	
		\begin{abstract} 
%A solution to a conjecture of Graham and Lov{\'a}sz about the
Unimodality of the normalized coefficients of the  characteristic polynomial of distance matrices of trees are known and bounds on the location of its peak (the largest coefficient) are also known. Recently, an extension of these results to distance matrices of block graphs was given. In this work, we extend these results to two additional distance-type matrices associated with trees: the Min-4PC matrix and the 2-Steiner distance matrix. We show that the sequences of coefficients of the characteristic polynomials of these matrices are both unimodal and log-concave. Moreover, we find the peak location for the coefficients of the characteristic polynomials of the Min-4PC matrix of any tree on $n$ vertices. Further, we show that the Min-4PC matrix of any tree on $n$ vertices is isometrically embeddable in $\RR^{n-1}$ equipped with the $\ell_1$ norm. 
% two more matrices associated to trees: the Min-4PC matrix and the 2-Steiner  distance matrix.
		\end{abstract}
		
		\begin{keyword}
Unimodal, Log-concave, Characteristic polynomial, Four-point condition, Steiner distance.
			\MSC[2020] 05C50\sep  05C05\sep  05C12\sep  15A18. 
		\end{keyword}
	\end{frontmatter}
	
	%	\linenumbers
	
%%%%%%%%%%%%%%%%%%%%%%%%%%%%%%%%%%%%	
	\section{Introduction}
	\label{sec:intro}
Let $A = a_0,a_1,\cdots,a_m$ be a sequence of real numbers. The sequence $A$ is called  
	\emph{unimodal} if there exists an index $k$ with $1 \leq k \leq m-1$ 
	such that $a_{j-1}\leq a_j$ 
	when $j\leq k$ and $a_j\geq a_{j+1}$ when $j\geq k$.  The  sequence $A$
	%$a_0,a_1,\cdots,a_n$ 
is called \emph{log-concave} if 
	$a_i^2\geq a_{i-1}a_{i+1}$ when $1 \leq i \leq m-1$. Log-concavity and unimodality are significant properties with application across various areas; for example, algebra (see \cite{branden-unimodality_log_concavity} by Br{\"a}nd{\'e}n and \cite{stanley1989log} by Stanley), probability theory  (see \cite{prekopa1971logarithmic} by Prekopa), combinatorics and geometry \cite{stanley1989log}. These applications emphasize the importance of understanding and identifying log-concave sequences in different mathematical contexts. For the distance matrix of a tree, Graham and Lov{\'a}sz in \cite[page 83]{graham-lovasz} conjectured unimodality of the normalized coefficients of the characteristic polynomial of the distance matrix of trees and also conjectured the location of the peak(s). The unimodality part was proved by Aalipour et. al. in 
	\cite{aalipour-hogben-etal-conjecture-graham-lovasz} and the peak location was disproved by Collins in \cite{collins-disproof-graham-lovasz-peak}.
	
	Two points are noteworthy.  Firstly, 
	for a square matrix $M$, the definition of its characteristic 
	polynomial used in all earlier papers is $\chi_M(x) = \det(M - xI)$ 
	and this does not always make 
	$\chi_M(x)$ monic.  We thus change the definition slightly
	and define 
	\begin{equation}
		\label{eqn:charpoly_monic}
		\charpoly_M(x) = \det(xI - M).  
	\end{equation}
	Doing this small change helps us get rid of the need to multiply 
	its coefficients by a power of $(-1)$ with the exponent depending on $n$.  
	%The reason for doing so in their conjecture with the earlier
	%definition is mentioned by 
	%Graham and Lovasz in \cite[Equation 44]{graham-lovasz}.  
	Hence, for the rest of this paper, for square matrices $M$, we define
	$\charpoly_M(x)$ using \eqref{eqn:charpoly_monic} and make the 
	needed small changes to results before  quoting them 
	from the literature.
	
	Secondly, normalizing the coefficients of the  characteristic 
	polynomial is not important for unimodality as we get similar 
	results by scaling the coefficients $c_k$ of the characteristic polynomial 
	by $\alpha^k$, where $\alpha$ is a positive real number.  This point is also mentioned by both
	Abiad et. al 
	in \cite[Section 4]{Abiad-Brimkov-Hayat-Khramova-Koolen-dist-char-poly} 
	and by Aalipour et. al in
	\cite[Observation 1.3]{aalipour-hogben-etal-conjecture-graham-lovasz}.  
	However, to determine the peak location of a unimodal sequence (or for
	bounds on the peak location), it is important to know whether we 
	take the coefficients of the characteristic polynomial or its 
	normalized version.  
	In this paper, for results on the peak location, we consider the sequence
	of coefficients of $\charpoly_M(x)$ without any normalization.  
	Abiad et. al in 
	\cite{Abiad-Brimkov-Hayat-Khramova-Koolen-dist-char-poly} also 
	give their results for the coefficients of the un-normalized characteristic polynomial.
	%though we mention appropriate 
	%changes that need to be made for its normalized version.

	Let $T$ be a tree with $n$ vertices and let $D$ be its distance
	matrix.  Let $\charpoly_D(x) = \det (xI - D) = \sum_{k=0}^n c_k x^k$
	be $D$'s characteristic polynomial.  By definition, as
	$\charpoly_D(x)$ is a monic polynomial, $c_n = 1$.  
	We further have $c_{n-1} = 0$ as $c_{n-1} = \Tr(D)$ which is 
	zero (as all diagonal entries of distance matrices are zero). In \cite{aalipour-hogben-etal-conjecture-graham-lovasz}, Aalipour et. al proved the following result. 
	
	\begin{theorem}[Aalipour et. al]
		\label{thm:unimodal-log-conc}
		With the notation above, let $\displaystyle d_k = \frac{-1}{2^{n-k-2}} 
		c_k$ be the normalized coefficients of $\charpoly_D(x)$.  Then, the 
		sequence $d_k$ as $k$ varies from $0$ to $n-2$ is unimodal and log concave.
	\end{theorem}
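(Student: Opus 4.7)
The plan is to combine the spectral structure of $D$ with a tree-specific combinatorial formula for its characteristic polynomial, since either ingredient alone is insufficient. A first reduction uses the Graham--Lov\'asz inertia theorem: $D$ has exactly one positive eigenvalue $\lambda_1$ and $n-1$ negative eigenvalues. This factorizes $\charpoly_D(x)=(x-\lambda_1)\,q(x)$ where $q(x)=\prod_{i=2}^{n}(x-\lambda_i)=\sum_{j=0}^{n-1}q_j x^j$ has only negative roots, hence positive coefficients $q_j$ that are log-concave by Newton's inequalities for real-rooted polynomials. Multiplying out gives $c_k = q_{k-1}-\lambda_1 q_k$ (with the convention $q_{-1}=q_n=0$), so $d_k=(\lambda_1 q_k-q_{k-1})/2^{n-k-2}$.

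Positivity of $d_k$ for $0\le k\le n-2$ amounts to $\lambda_1 q_k > q_{k-1}$, while the log-concavity inequality $d_k^2\ge d_{k-1}d_{k+1}$ reduces, because the denominators $2^{n-k-2}$ are geometric in $k$ and cancel exactly, to
\begin{equation*}
(\lambda_1 q_k - q_{k-1})^2 \ge (\lambda_1 q_{k-1}-q_{k-2})(\lambda_1 q_{k+1}-q_k),
\end{equation*}
i.e., $\lambda_1^2(q_k^2-q_{k-1}q_{k+1})+(q_{k-1}^2-q_{k-2}q_k) \ge \lambda_1(q_{k-1}q_k-q_{k-2}q_{k+1})$. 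The two summands on the left are non-negative by Newton's inequality on $(q_j)$, but the right-hand side is also non-negative (log-concavity of $(q_j)$ forces $q_{k-1}q_k\ge q_{k-2}q_{k+1}$), so the estimate does not follow from real-rootedness alone; the Perron root $\lambda_1$ must be controlled by tree-specific means.

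To close the gap, I would bring in an explicit combinatorial formula for $c_k$. Starting from $c_k = (-1)^{n-k}\sum_{|S|=n-k}\det(D[S])$ and applying a Graham--Hoffman--Hosoya-type reduction of $\det(D[S])$ along the block structure of the induced forest on $S$ (together with the Graham--Pollak formula $\det(D)=(-1)^{n-1}(n-1)2^{n-2}$ valid for any tree distance matrix), one expects an identity $c_k = (-1)^{n-k-1}\, 2^{n-k-2}\,s_k(T)$ for a positive tree-statistic $s_k(T)$ enumerating certain forest- or edge-configurations of $T$. This makes $d_k = s_k(T)>0$ immediate and reduces log-concavity to $s_k(T)^2\ge s_{k-1}(T)\,s_{k+1}(T)$. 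The main obstacle is this last step: I would attempt it inductively, pivoting on a leaf $\ell$ of $T$, relating $s_k(T)$ to statistics of $T-\ell$, and constructing a weight-preserving injection from pairs of $(k-1,k+1)$-configurations into pairs of $(k,k)$-configurations. Designing this injection uniformly in $T$ and verifying that it is respected by the leaf-deletion recurrence is the technical heart of the argument.
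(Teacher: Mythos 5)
Your key obstruction is self‑inflicted. You split $\charpoly_D(x)=(x-\lambda_1)q(x)$ and then try to deduce log-concavity of the $c_k$ from Newton's inequalities applied to $q$ alone, concluding that ``the estimate does not follow from real-rootedness alone; the Perron root $\lambda_1$ must be controlled by tree-specific means.'' This is incorrect. The polynomial $\charpoly_D(x)$ is \emph{itself} real-rooted — $D$ is a real symmetric matrix, so this is immediate from the spectral theorem — and Newton's inequalities (the paper's Lemma \ref{unimodal-condition}, item 1, i.e.\ Br\"and\'en's Lemma 7.1) apply directly to $\charpoly_D(x)$, giving $c_k^2\geq c_{k-1}c_{k+1}$ with no reference to how $\lambda_1$ sits relative to the $q_j$. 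Equivalently: the product $(x-\lambda_1)q(x)$ has all real roots (one positive, the rest negative), and that is all Newton needs. Since $d_k=-c_k/2^{n-k-2}$ differs from $c_k$ by a sign and a geometric factor $2^{-(n-k-2)}$, and $d_k^2/(d_{k-1}d_{k+1})=c_k^2/(c_{k-1}c_{k+1})$ as you correctly computed, the sequence $(d_k)$ inherits log-concavity. There is nothing left to prove here, and in particular there is no need to bound $\lambda_1$ or to establish the inequality $\lambda_1^2(q_k^2-q_{k-1}q_{k+1})+(q_{k-1}^2-q_{k-2}q_k)\geq\lambda_1(q_{k-1}q_k-q_{k-2}q_{k+1})$ from scratch; that inequality is \emph{equivalent} to Newton's inequality for $\charpoly_D$ and hence already known.

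The positivity $d_k>0$ for $0\leq k\leq n-2$ (equivalently $c_k<0$) is genuinely a tree-specific input, and the intended route — stated explicitly by the paper right after Theorem \ref{thm:unimodal-log-conc} — is the Edelberg--Garey--Graham theorem (Theorem \ref{thm:edelberg-garey-graham}). Your alternative plan, namely to derive an explicit formula $c_k=(-1)^{n-k-1}2^{n-k-2}s_k(T)$ via Graham--Hoffman--Hosoya type cofactor expansions and then prove $s_k(T)>0$ combinatorially, is a reasonable direction but is left entirely as a sketch; and the further ambition of proving log-concavity of $s_k(T)$ by a leaf-deletion injection is unnecessary once you observe the real-rootedness point above. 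The efficient argument is: real-rootedness of $\charpoly_D$ gives log-concavity of $(c_k)$ and hence of $(d_k)$; Edelberg--Garey--Graham gives positivity of $(d_k)_{k=0}^{n-2}$; positivity plus log-concavity gives unimodality (Lemma \ref{unimodal-condition}, item 2).
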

	
	The proof of Theorem \ref{thm:unimodal-log-conc} uses real 
	rootedness of $\charpoly_D(x)$ to show log 
	concavity. For unimodality, they need the following result 
of Edelberg, Garey and Graham (see \cite[Theorem 2.3]{edelberg-garey-graham}) 
which states that when $0 \leq k \leq n-2$, 
	$c_k$ is negative (and hence $d_k$ is positive).
	
\begin{theorem}[Edelberg, Garey and Graham]
\label{thm:edelberg-garey-graham}
With $T$ and $D$ as above, let $\displaystyle \charpoly_D(x) = \sum_{k=0}^n c_k x^k.$  
Then, for $0 \leq k \leq n-2$, we have $c_k < 0$.
\end{theorem}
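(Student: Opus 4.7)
The key input I would use is the inertia theorem of Graham and Pollak: for the distance matrix $D$ of a tree on $n$ vertices, $D$ has exactly one positive eigenvalue $\mu > 0$ and $n-1$ negative eigenvalues $\nu_1, \ldots, \nu_{n-1} < 0$. This lets me factor
\[
\charpoly_D(x) \;=\; (x - \mu)\, p(x), \qquad p(x) \;=\; \prod_{i=1}^{n-1}(x + |\nu_i|) \;=\; \sum_{j=0}^{n-1} b_j\, x^j,
\]
where the coefficients $b_j = e_{n-1-j}(|\nu_1|, \ldots, |\nu_{n-1}|)$ are strictly positive, being elementary symmetric polynomials evaluated at positive reals.

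Multiplying out this factorization gives $c_k = b_{k-1} - \mu\, b_k$, with the convention $b_{-1} = b_n = 0$. The case $k = 0$ is immediate: $c_0 = -\mu\, b_0 < 0$. For $1 \leq k \leq n-2$, the desired inequality $c_k < 0$ becomes
\[
e_{n-k}(|\nu_1|, \ldots, |\nu_{n-1}|) \;<\; \mu \cdot e_{n-1-k}(|\nu_1|, \ldots, |\nu_{n-1}|).
\]
Since $\Tr(D) = 0$ forces $\mu = \sum_i |\nu_i| = e_1$, after substituting $j = n - 1 - k$ this reduces to the claim $e_{j+1} < e_1\, e_j$ on positive reals, for $1 \leq j \leq n - 2$.

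This last inequality follows from the standard identity $e_1\, e_j = (j+1)\, e_{j+1} + \sum_i x_i^2\, e_{j-1}^{(i)}$, where $e_{j-1}^{(i)}$ denotes the elementary symmetric polynomial in the variables other than $x_i$. Every term on the right-hand side is non-negative, and the second sum is strictly positive for $j \geq 1$; hence $e_1\, e_j > (j+1)\, e_{j+1} \geq e_{j+1}$, which is exactly the strict inequality needed.

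The main obstacle, in my view, is Graham-Pollak itself: without the inertia theorem, controlling the signs of the $c_k$ directly from the principal-minor expansion $c_k = (-1)^{n-k}\sum_{|S| = n-k}\det(D[S])$ is awkward, because each $\det(D[S])$ depends on the Steiner hull of $S$ in $T$ in a nontrivial way. Once the inertia is in place, the argument reduces cleanly to the elementary symmetric-function inequality above.
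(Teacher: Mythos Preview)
Your proof is correct and follows essentially the same line as the paper's proof of its generalization, Theorem~\ref{coeff-same-sign}: factor out the unique positive eigenvalue, use $\Tr(D)=0$ to identify it with $e_1$ of the remaining (absolute) eigenvalues, and reduce to the elementary symmetric inequality $e_{j+1}<e_1 e_j$. The paper merely cites Theorem~\ref{thm:edelberg-garey-graham} from the literature, but your argument---including the explicit identity $e_1 e_j=(j+1)e_{j+1}+\sum_i x_i^2 e_{j-1}^{(i)}$---matches (and slightly fleshes out) the proof of Theorem~\ref{coeff-same-sign}.
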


	An extension of these results to distance matrices
	of block graphs was obtained by 
	Abiad et al. 
	in \cite{Abiad-Brimkov-Hayat-Khramova-Koolen-dist-char-poly}.  
	The authors showed unimodality results for 
	 coefficients in the characteristic polynomial of distance 
	matrices  of block graphs along similar lines and gave bounds on 
	the peak location for some block graphs.

	In this paper, we extend such results to two other matrices.  Both
	matrices are defined for trees $T$.  The first, $\Min_T$ is very similar to the
	distance matrix $D_T$ of trees $T$ while the second is 2-Steiner
	distance matrix $\DD_2(T)$ of trees $T$.   This second matrix does 
	not have diagonal elements that are zero but our proof goes through nonetheless.
	Both of these are $\binom{n}{2} \times \binom{n}{2}$ matrices but are 
	not full rank matrices (see \cite{aliazimi-siva-steiner-2-dist, bapat-siva-snf-4PC}), so our results are for the restriction of these
	matrices to a basis for their respective row spaces.  
	
	Let $T$ be a tree on $n$ vertices and let $\VV$ be the set of 2-element 
	subsets of the vertices of $T$.  Clearly $|\VV| = \binom{n}{2}$.  Define the following 
	$\binom{n}{2} \times \binom{n}{2}$ matrices whose rows and columns are indexed by
	elements of $\VV$.  
	Let $d_{i,j}$ be the distance between $i$ and $j$ in $T$.  For four
	vertices $i,j,k$ and $l$ from the vertex set of $T$, define the 
	(multi)set 
	$$S_{i,j,k,l} = \{ d_{i,l} + d_{j,k}, d_{i,k} + d_{j,l}, 
	 d_{i,j} + d_{k,l}\}.$$  
	Tree distances are special and Buneman 
	in \cite{buneman-4PC} showed that for all choices $i,j,k,l$ of four 
	vertices, among the three terms in $S_{i,j,k,l}$, the second maximum value 
	equals the maximum value.  This inspired the definition of 
	the following $\binom{n}{2} \times \binom{n}{2}$ matrices.  
	
	Define the $\Min_T$ matrix as follows.  For $\{i,j\}, \{k,l\} \in \VV$,
	the entry of $\Min_T$ corresponding to the row $\{i,j\}$ and the column $\{k,l\}$ 
	is the minimum entry of $S_{i,j,k,l}$.  One can also define 
	the $\Max_T$ matrix by changing the word ``minimum" in the 
	previous sentence to ``maximum".    For a tree $T$, Azimi and Sivasubramanian 
	in \cite{aliazimi-siva-steiner-2-dist} defined $\DD_2(T)$, 
	the 2-Steiner distance matrix of a tree $T$ as follows. For $\{i,j\}, \{k,l\} \in \VV$, the entry of
	$\DD_2(T)$ corresponding to the row $\{i,j\}$ and column $\{k,l\}$ 
is the minimum number of edges among all connected subtrees of $T$ whose vertex set contains 
the four vertices $i,j,k$ and $l$.
	Azimi and Sivasubramanian showed that $\DD_2(T)$ is the average of the $\Max_T$ and $\Min_T$ matrix, that is, $\DD_2(T) = \frac{1}{2} \Big(\Max_T + \Min_T \Big)$.

Bapat and Sivasubramanian in
\cite{bapat-siva-snf-4PC} studied the $\Min_T$ matrix and showed 
results on its rank and  its invariant factors.  
Consider a tree $T = (V,E)$ on $n$ vertices.  Let $j,k \in V$ with
$j\not= k$ be two vertices and let $f = \{j,k\} 
\not\in E$ be a non edge of $T$ with $d_{j,k} = d > 1$ 
(that is, the distance in $T$ between $j$ and $k$ is $d$).
Bapat and Sivasubramanian in \cite{bapat-siva-snf-4PC} proved that 
$\rk(\Min_T) = n$ and  
the set $B = E \cup \{f\}$ forms basis of $\Min_T$'s row space.  
Our first result is the following about $\Min_T[B,B]$, the 
submatrix of $\Min_T$ restricted to both the rows and columns in $B$.

\begin{theorem}
\label{thm:unimodal-log-conc-min4pc} 
With the notation above, let $N = \Min_T[B,B]$ and  $\charpoly_N(x) =  \sum_{k=0}^n a_kx^k$. Then, the sequence $|a_k|$  as $k$ varies from $0$ to $n-2$ is unimodal and log-concave. If $|a_t| = \max_{0 \leq k \leq n-2} |a_k|$  is the largest coefficient in absolute value, then  $\lfloor \frac{n-2}{3} \rfloor \leq t \leq \lceil \frac{n+1}{3}\rceil$.
\end{theorem}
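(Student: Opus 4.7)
The plan is to follow the blueprint of \cite{aalipour-hogben-etal-conjecture-graham-lovasz}, adapted to $N = \Min_T[B,B]$. Since $\Min_T$ is symmetric in its two two-element index sets, $N$ is a real symmetric $n \times n$ matrix (note $|B| = |E(T)| + 1 = n$), so $\charpoly_N(x)$ is real-rooted. By Newton's inequalities, the elementary symmetric functions of the eigenvalues of $N$ form a log-concave sequence. This provides the ``real-rootedness'' backbone of the argument; to pass from log-concavity of symmetric functions to log-concavity and unimodality of the sequence $|a_k|$, sign information on $N$'s eigenvalues is needed.

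The crucial structural input I would establish next is that $N$ has inertia $(1, n-1, 0)$: one positive eigenvalue $\lambda$ and $n-1$ negative eigenvalues $-\mu_1, \ldots, -\mu_{n-1}$ with each $\mu_j > 0$. The $\ell_1$-embeddability statement advertised in the abstract is morally equivalent to $-N$ being conditionally negative definite (via a Schoenberg-type criterion), which, together with the rank identity $\rk(\Min_T) = n$ of \cite{bapat-siva-snf-4PC}, forces this inertia. Factoring $\charpoly_N(x) = (x - \lambda)\,Q(x)$ with $Q(x) = \sum_{j=0}^{n-1} q_j x^j$ having only negative roots gives $q_j > 0$ for all $j$, and comparing coefficients produces the fundamental identity $a_k = q_{k-1} - \lambda q_k$ (with the convention $q_{-1} = q_n = 0$). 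Next I would prove an analog of Theorem~\ref{thm:edelberg-garey-graham} for $N$: $a_k < 0$ for $0 \le k \le n-2$, i.e.\ $\lambda q_k > q_{k-1}$. I would approach this by expanding $\charpoly_N(x)$ as a signed sum of principal minors of $N$ and using the four-point condition underlying $\Min_T$ — together with the fact that $B = E \cup \{f\}$ is an edge set plus one chosen non-edge — to control the sign of each minor. With this in hand, $|a_k| = \lambda q_k - q_{k-1}$ on $0 \le k \le n-2$; log-concavity of $(q_j)$ (from real-rootedness of $Q$) combined with $\lambda q_j > q_{j-1}$ then yields $a_k^2 \ge a_{k-1} a_{k+1}$ via the same short algebraic identity used in \cite{aalipour-hogben-etal-conjecture-graham-lovasz}, and positivity forces unimodality.

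For the peak location, I would compute the trace-type invariants $\Tr(N)$ and $\Tr(N^2)$ and the first few low-order coefficients of $\charpoly_N(x)$ explicitly as combinatorial invariants of $T$ depending on its degree sequence and pairwise distances. These translate into effective constraints on $\lambda$ and on the first symmetric functions $e_j(\mu) = q_{n-1-j}$. The peak index $t$ is characterized by the sign change of
\[
|a_{k+1}| - |a_k| = (\lambda + 1)\,q_k - q_{k-1} - \lambda\, q_{k+1},
\]
and combining the trace estimates with log-concavity of $(q_j)$ and the relation $e_j(\mu) = q_{n-1-j}$ should place this sign change into the window $[\lfloor (n-2)/3 \rfloor, \lceil (n+1)/3 \rceil]$. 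The principal obstacle I anticipate is the Edelberg--Garey--Graham analog in step two, which requires a combinatorial minor analysis specific to $\Min_T[B,B]$ and the basis $B$; pinning down the exact constant $1/3$ in the peak-location bounds is the other delicate point, needing sharp trace estimates that exploit the tree structure rather than generic log-concavity.
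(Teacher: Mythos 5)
Your outline for the unimodality/log-concavity half is in the right spirit but takes a detour the paper avoids. You correctly identify that the key structural fact is $\Inertia(N) = (1, n-1, 0)$ (the paper proves this by a two-line Schur-complement computation using $N[E,E] = 2(J-I)$ and the known inverse and Schur complement from \cite{bapat-siva-snf-4PC}). But the Edelberg--Garey--Graham-type sign statement $a_k < 0$ for $0 \le k \le n-2$ does \emph{not} require the ``combinatorial minor analysis specific to $\Min_T[B,B]$'' you anticipate as the main obstacle: once the inertia is $(1,n-1,0)$ and $\Tr(N) = 0$, the negativity of $a_0,\ldots,a_{n-2}$ follows by pure linear algebra, factoring $\charpoly_N(x) = (x-\lambda_1)\prod_{i\ge 2}(x+\lambda_i)$ and observing $g_k - \lambda_1 g_{k-1} < 0$ because $\lambda_1 = g_1$ and $g_1 g_{k-1} \ge g_k$. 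This is exactly the paper's Theorem \ref{coeff-same-sign} and Corollary \ref{cor:dist-matrix-unim}, and it is a mild generalization of \cite[Lemma 4.1]{Abiad-Brimkov-Hayat-Khramova-Koolen-dist-char-poly}; no four-point-condition minor bookkeeping is needed.

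The peak-location half is where your proposal has a genuine gap. You plan to extract the window $\bigl[\lfloor (n-2)/3\rfloor,\, \lceil (n+1)/3\rceil\bigr]$ from $\Tr(N)$, $\Tr(N^2)$, log-concavity of $(q_j)$, and the sign change of $|a_{k+1}|-|a_k|$, but these generic symmetric-function constraints are far too weak to pin down the peak to within a constant-size window, let alone to produce the specific constant $1/3$. The paper's route is sharper: because $N[E,E] = 2(J-I)$ is a constant matrix off the diagonal, the vectors $e(j,j+1)$ along the $ij$-path and off it give $n-3$ eigenvectors for eigenvalue $-2$; the remaining three eigenvalues come from an explicit $3\times 3$ quotient matrix of an equitable partition (Theorem \ref{spectra-min4pc}). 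Hence $\charpoly_N(x) = (x+2)^{n-3}\bigl(x^3 + b_1 x^2 + c_1 x + d_1\bigr)$ with $b_1, c_1, d_1$ explicit in $n$ and $d = d_{i,j}$, and the coefficients $a_k$ admit a closed formula in binomial coefficients. The $1/3$ in the bound then falls out of an explicit computation of $|a_k| - |a_{k-1}|$, with the cutoffs $k \le (n-2)/3$ and $k \ge (n+4)/3$ emerging from factors such as $n - 3k + 1$ and $n - 3k - 2$. Without this near-total diagonalization of $N$ your approach would not reach the stated bound.
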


	When $T$ is a tree of order 
	$n$ with $p$ leaves and $B \in \BB$ is a basis of $\DD_2(T)$'s row space, the authors in 
	\cite[Theorem 18]{aliazimi-siva-steiner-2-dist} proved that 
	$\DD_2(T)[B,B]$ has $2n-p-2$ negative eigenvalues and one positive eigenvalue. In this paper, we show that when $B$ is a basis of $\Min_T$'s row space, we get an analogous statement 
	for the matrix $\Min_T[B,B]$.   This is proved in two ways
	with our first proof being Theorem \ref{thm:inertia-min4pc} proved in Section \ref{sec:min4pc-results}. Our second proof is more general and is of independent interest 
	as it gives some corollaries about hypermetricity and negative-type
	metric spaces which we do not get from our first proof.
 	In Section \ref{sec:ell1}, we 
	give an isometric embedding of $T$'s $\binom{n}{2} \times 
	\binom{n}{2}$  $\Min_T$ matrix into $\RR^{n-1}$ equipped with 
	the $\ell_1$ norm.  We prove the following result.
	
	\begin{theorem}
		\label{thm:l1_embedding}.  
		Let $T$ be a tree having $n$ vertices.
		%and let $B = E \cup \{f\}$
		%where $f = \{i,j\}$ and $f \not\in E$.  Let $\Min_T[B,B]$ be 
		%the matrix $\Min_T$ restricted to the rows and columns in $B$. 
		Then, $\Min_T$ is isometrically $\ell_1$-embeddable in
		$\RR^{n-1}$.
	\end{theorem}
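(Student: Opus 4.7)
The plan is to exhibit an explicit isometric embedding $\phi:\VV \to \RR^{n-1}$ by identifying the $n-1$ coordinates of $\RR^{n-1}$ with the edges of $T$. For each pair $\{i,j\} \in \VV$, let $\phi(\{i,j\})$ be the $0/1$-indicator vector of the edge set of the unique path $P(i,j)$ from $i$ to $j$ in $T$. This is the natural analogue of the classical $\ell_1$-embedding of a tree metric, but now applied to pairs of vertices rather than to individual vertices.

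With this definition, each coordinate of $\phi(\{i,j\}) - \phi(\{k,l\})$ lies in $\{-1,0,1\}$, so
\[
\|\phi(\{i,j\}) - \phi(\{k,l\})\|_1 \;=\; |P(i,j) \,\triangle\, P(k,l)|,
\]
the number of edges lying on exactly one of the two paths. The theorem then reduces to showing that this symmetric-difference size equals $\min S_{i,j,k,l}$, the $(\{i,j\},\{k,l\})$-entry of $\Min_T$. Since $T$ is a tree, the intersection $P(i,j) \cap P(k,l)$ is connected and is therefore either empty or a subpath $P(a,b)$ of $T$; this naturally splits the argument into two cases.

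In the empty-intersection case, $|P(i,j) \triangle P(k,l)| = d(i,j) + d(k,l)$, and using the Steiner tree of $\{i,j,k,l\}$ (which must contain a connecting segment of length $\ell \geq 0$ joining $P(i,j)$ to $P(k,l)$) one sees that the other two sums in $S_{i,j,k,l}$ both equal $d(i,j)+d(k,l)+2\ell$; hence $d(i,j)+d(k,l)$ is indeed the minimum. In the non-empty case, after relabeling $k \leftrightarrow l$ if needed, we may assume $P(k,l) = P(k,a) \cup P(a,b) \cup P(b,l)$ with $k$ on the same side of $P(a,b)$ as $i$ on $P(i,j)$. Then the symmetric difference decomposes as four edge-disjoint paths $P(i,a), P(k,a), P(j,b), P(l,b)$, giving $|P(i,j) \triangle P(k,l)| = d(i,k) + d(j,l)$, while a direct computation shows that both $d(i,j)+d(k,l)$ and $d(i,l)+d(j,k)$ exceed this quantity by $2\,d(a,b)$.

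The main obstacle is executing the decomposition step cleanly. The key point is that the two edges incident to $a$ used to reach $i$ and $k$ must be distinct, for otherwise the common subpath $P(i,j) \cap P(k,l)$ would extend past $a$, contradicting its maximality. This distinctness forces $d(i,k) = d(i,a) + d(a,k)$, and the symmetric statement at $b$ gives $d(j,l) = d(j,b) + d(b,l)$; together these yield the identity $|P(i,j) \triangle P(k,l)| = d(i,k)+d(j,l)$, and hence equal to $\min S_{i,j,k,l}$ by the four-point condition.
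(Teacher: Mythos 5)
Your proof is correct and takes essentially the same approach as the paper's: both send $\{i,j\}$ to the $0/1$ edge-incidence vector of the unique $i$--$j$ path in $T$ and split the verification of $\lVert\phi_{i,j}-\phi_{k,l}\rVert_1 = \min S_{i,j,k,l}$ into the disjoint-paths and overlapping-paths cases. Your write-up is somewhat more explicit in the overlapping case (isolating the distinct branches at the endpoints $a,b$ of the shared subpath and decomposing the symmetric difference into the four subpaths $P(i,a),P(k,a),P(j,b),P(l,b)$), whereas the paper states Case~2 more tersely.
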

	
	Our proof is surprisingly easy and appears in Section \ref{sec:ell1}. For all trees $T$, it follows from the theory of isometrically $\ell_1$-embeddable finite metric spaces (see Deza and Laurent 
	\cite[Chapter 19]{deza-laurent-geometry-cuts-metrics}) that the $\Min_T$ matrix has exactly one positive eigenvalue. By standard interlacing arguments, restricting $\Min_T$ to elements 
	from a basis $B$, if $\Min_T[B,B]$ is a full rank matrix having
	rank $r$,  one infers that $\Min_T[B,B]$ has $r-1$ negative eigenvalues
and 1 positive eigenvalue.

	A distance matrix $D = (d_{i,j})_{1 \leq i,j \leq n}$ is said to be 
	a {\sl hypermetric} if 
	\begin{equation}
		\label{eqn:ineq-for-metr}
		\sum_{1 \leq i < j \leq n} x_ix_jd_{i,j} \leq 0 
	\end{equation}
	for all $x \in \ZZ^n$ with $\sum_{i=1}^n x_i = 1$ ($x_i$ here is the $i$-th
	component of $x$).  If inequality 
	\eqref{eqn:ineq-for-metr} holds for  all $x \in \ZZ^n$ 
	with $\sum_{i=1}^n x_i = 0$, 
	then $D$ is said to be a {\sl negative type metric.}  It is known (see
	\cite[Chapter 6]{deza-laurent-geometry-cuts-metrics}) that if a distance 
	matrix $D$ is isometrically embeddable in an $\ell_1$ space, 
	then it is both {\sl hypermetric} and {\sl of negative type.}
	For any tree $T$, though the matrix $\Min_T$ satisfies the 
	triangle inequality, proving this takes some work.  Remark
	\ref{rem:triangle-ineq} shows that this can be 
	obtained as a simple consequence of our isometric embedding.

	Azimi and Sivasubramanian in \cite{aliazimi-siva-steiner-2-dist} 
	considered the matrix $\DD_2(T)$. 
	Note that the diagonal entry of $\DD_2(T)$ corresponding to
	 the row and column indexed by $\{i,j\} \in \VV$ equals $d_{i,j}$, 
	 which is the tree distance between $i$ and $j$.  
	 Hence, $\DD_2(T)$
	does not have zero entries in its diagonal (indeed all its 
	main diagonal entries are positive).  For a tree $T$ of order $n$ with $p$ leaves
	Azimi and Sivasubramanian
	showed that $\rk(\DD_2(T)) = 2n-p-1$, gave a class $\BB$ of bases for 
	its row space and obtained the determinant of 
	$\DD_2(T)[B,B]$,  the restriction of $\DD_2(T)$ to the entries in rows and columns from $B \in \BB$. In this article, we obtain the following result about $\DD_2(T)[B,B]$.

\begin{theorem}
		\label{thm:unimodal-log-conc-2steiner-dist} 
		Let $T$ be a tree on $n$ vertices and let $T$ have $p$ leaves.   With the 
		notation above, for any $B \in \BB$, consider $P = \DD_2(T)[B,B]$ 
		and let $\charpoly_P(x) = \sum_{k=0}^{2n-p-1} a_kx^k$.
		Then, the sequence $|a_k|$ as $k$ varies from $0$ to $2n-p-2$
		is unimodal and log-concave.   
	\end{theorem}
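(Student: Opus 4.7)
The plan is to follow the template established by Aalipour et al.\ in their proof of Theorem~\ref{thm:unimodal-log-conc}, and used again for Theorem~\ref{thm:unimodal-log-conc-min4pc} above: combine real-rootedness of $\charpoly_P(x)$ with a sign-uniformity argument for its lower-order coefficients, and close via Newton's inequality. Throughout, set $m = 2n-p-1$, so $\charpoly_P(x) = \sum_{k=0}^m a_k x^k$.

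Since $\DD_2(T)$ is symmetric, so is $P$, and hence $\charpoly_P(x)$ is real-rooted. By \cite[Theorem~18]{aliazimi-siva-steiner-2-dist}, $P$ has exactly one positive eigenvalue $\lambda_1$ and $m-1 = 2n-p-2$ negative eigenvalues $\lambda_2, \ldots, \lambda_m$. Setting $\mu_i = -\lambda_i > 0$ for $i \geq 2$ and writing $\prod_{i=2}^m(x+\mu_i) = \sum_{j=0}^{m-1} b_j x^j$, each $b_j$ is an elementary symmetric polynomial in $\mu_2,\ldots,\mu_m$ and is strictly positive. Expanding $\charpoly_P(x) = (x-\lambda_1)\sum_{j=0}^{m-1} b_j x^j$ yields $a_0 = -\lambda_1 b_0 < 0$ and $a_k = b_{k-1} - \lambda_1 b_k$ for $1 \leq k \leq m-1$.

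The central step is to show that $a_k < 0$ for every $0 \leq k \leq m-1$. By Newton's inequalities for elementary symmetric functions of positive reals, $(b_j)_{j=0}^{m-1}$ is log-concave, so the ratios $b_{k-1}/b_k$ are non-decreasing in $k$; their maximum over $1 \leq k \leq m-1$ is then attained at $k = m-1$, giving $b_{m-2}/b_{m-1} = b_{m-2} = \sum_{i=2}^m \mu_i$. Now $a_{m-1} = b_{m-2} - \lambda_1 = -\Tr(P)$, and since the diagonal entry of $\DD_2(T)$ at $\{i,j\}$ equals the positive tree distance $d_{i,j}$, we have $\Tr(P) > 0$, i.e., $\lambda_1 > b_{m-2}$. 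The monotonicity of $b_{k-1}/b_k$ then forces $\lambda_1 > b_{k-1}/b_k$ for every $1 \leq k \leq m-1$, yielding $a_k < 0$ throughout the range.

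With sign-uniformity in hand, Newton's inequality applied to the real-rooted polynomial $\charpoly_P(x)$ gives $(a_k/\binom{m}{k})^2 \geq (a_{k-1}/\binom{m}{k-1})(a_{k+1}/\binom{m}{k+1})$ for $1 \leq k \leq m-1$. Restricting to $1 \leq k \leq m-2$, where $a_{k-1}, a_k, a_{k+1}$ all share the same sign, and combining with the log-concavity of binomial coefficients $\binom{m}{k}^2 \geq \binom{m}{k-1}\binom{m}{k+1}$, one rearranges to obtain $|a_k|^2 \geq |a_{k-1}|\,|a_{k+1}|$. Thus $(|a_k|)_{k=0}^{m-1}$ is log-concave, and being a positive sequence, unimodal as well. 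The main obstacle is the sign-uniformity step; it rests on the inertia result of \cite{aliazimi-siva-steiner-2-dist} (only one positive eigenvalue) together with $\Tr(P) > 0$, the latter being available precisely because, in contrast to the distance-matrix case, the diagonal entries of $\DD_2(T)$ are positive rather than zero.
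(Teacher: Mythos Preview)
Your proof is correct and follows essentially the same route as the paper: cite the inertia result from \cite{aliazimi-siva-steiner-2-dist}, establish that $a_k<0$ for $0\le k\le m-1$, and deduce log-concavity and unimodality from real-rootedness. The paper packages the sign-uniformity step as the general Theorem~\ref{coeff-same-sign}/Corollary~\ref{cor:dist-matrix-unim} (proved via the elementary inequality $g_k\le g_1g_{k-1}$ together with $\lambda_1=g_1+\Tr(P)$), so its proof of Theorem~\ref{thm:unimodal-log-conc-2steiner-dist} is a one-line citation; you instead unpack an equivalent self-contained argument using monotonicity of the ratios $b_{k-1}/b_k$ from Newton's inequalities, which is a minor stylistic variation rather than a different strategy.
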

	
	%We consider two singular matrices in this work.   
	\comment{
		\blue{
			Though we start with singular matrices, by restricting attention 
			to entries from a basis, we consider two full rank $r \times r$ 
			matrices $M$.  Further, all our matrices will have 
			exactly one positive eigenvalue and $(r-1)$ negative eigenvalues.
			\red{Check that the result quoted below is on matrices with all
				diagonal entries being 0.}
			\blue{We give a mild generalization of 
				\cite[Lemma 4.1]{Abiad-Brimkov-Hayat-Khramova-Koolen-dist-char-poly}
				on the $\charpoly_M(x)$ where $M$ 
				has exactly one positive eigenvalue and has non negative 
				entries along its main diagonal.}   This is given as Theorem 
			\ref{coeff-same-sign} in Section \ref{sec:prelims}.  
			%Such a result 
			%is known for distance matrices and typically, all diagonal 
			%entries of distance matrices are zero.  Hence, 
			We need this version for the matrix $\DD_2(T)[\BB,\BB]$,
			as it has non-zero elements on its diagonals.
	}}

	A uniform proof giving bounds on the peak location of the coefficients 
of $\charpoly_{\DD_2(T)[B,B]}(x)$ for all trees $T$ seems hard.  So, we consider three 
	special cases, the star tree, the bi-star tree $S_{1,n-3}$ and the path tree 
and obtain bounds on $|a_t| = \max_{0 \leq k \leq 2n-p-3} |a_k|$,
	the largest coefficient in absolute value in their 
	respective characteristic polynomials.  For the star and 
	the bi-star our bounds are tight and are given as
	Theorem  \ref{peak-steiner-star} and Theorem 
	\ref{peak-location-steiner-bistar} in Subsections 
	\ref{subsec:peak-star} and \ref{subsec-2-steiner-bistar}, respectively.  
	For the path tree, we give an upper bound on the 
	peak location as Theorem \ref{th: peak-steiner-path} 
	in Subsection \ref{subsec:peak-locn-path}, and we 
conjecture the value of the peak location.

%%%%%%%%%%%%%%%%%%%%%%%%%%%%%%%%%%%%%%%%%%%%%%%%%%%%%%%%%%%%%%%
\section{Unimodality and log-concavity}
\label{sec:prelims}
	For unimodality, we will need the idea of {\sl real rootedness} of polynomials 
	with real coefficients.  The following result 
	\cite[Lemma 7.1]{branden-unimodality_log_concavity} is known.

	\begin{lemma}
		\label{unimodal-condition}
		Let $p(x)=\sum_{k=0}^n a_kx^k$ be a real-rooted polynomial with real 
		coefficients.  
		
\begin{enumerate}
\item Then its coefficient sequence $a_0,a_1,\hdots,a_n$ is 
log-concave.
\item If a sequence $a_0,a_1,\hdots,a_n$ is both positive and 
	log-concave, then it is unimodal.
\end{enumerate}    
\end{lemma}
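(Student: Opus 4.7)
The plan is to handle the two parts separately. For part (1), I would factor $p(x) = a_n \prod_{i=1}^n (x-r_i)$ with $r_i \in \RR$, so that $a_k = (-1)^{n-k} a_n\, e_{n-k}(r_1,\ldots,r_n)$, where $e_j$ denotes the $j$-th elementary symmetric polynomial in the $r_i$'s. Because the sign $(-1)^{n-k}$ cancels in both $a_k^2$ and $a_{k-1}a_{k+1}$, log-concavity of the coefficient sequence $a_0,\ldots,a_n$ is equivalent to log-concavity of $\{e_j\}_{j=0}^n$ evaluated at a real tuple.

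To establish log-concavity of the $e_j$'s, I would invoke Newton's inequality
\[
\left(\frac{e_j}{\binom{n}{j}}\right)^2 \;\geq\; \frac{e_{j-1}}{\binom{n}{j-1}}\cdot\frac{e_{j+1}}{\binom{n}{j+1}},
\]
valid whenever the $r_i$'s are real. A short computation shows that $\binom{n}{j}^2 / \bigl(\binom{n}{j-1}\binom{n}{j+1}\bigr) = (n-j+1)(j+1)/(j(n-j)) > 1$, so Newton's inequality is in fact strictly stronger than the plain log-concavity bound $e_j^2 \geq e_{j-1}e_{j+1}$, and it translates directly into $a_k^2 \geq a_{k-1}a_{k+1}$. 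The standard route to Newton's inequality is reduction to a degree-two real-rooted polynomial: by Rolle's theorem, differentiating $p(x)$ preserves real-rootedness, and by combining differentiation with the reversal $x^n p(1/x)$ one can isolate any three consecutive coefficients $a_{j-1}, a_j, a_{j+1}$ into a quadratic that is itself real-rooted. Non-negativity of the discriminant of that quadratic, after tracking the binomial book-keeping, is exactly Newton's inequality.

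For part (2), the positivity hypothesis guarantees that the ratios $r_k := a_k/a_{k-1}$ are well defined. Log-concavity $a_k^2 \geq a_{k-1}a_{k+1}$ rearranges (again using positivity) to $r_k \geq r_{k+1}$, so the ratio sequence is non-increasing. Letting $k^\ast$ be the largest index with $r_{k^\ast} \geq 1$, we have $a_j \geq a_{j-1}$ for $j \leq k^\ast$ and $a_j \leq a_{j-1}$ for $j > k^\ast$, which is precisely the unimodality condition with peak located at $k^\ast$.

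The main obstacle is the quantitative form of Newton's inequality needed in part (1): proving $e_j^2 \geq e_{j-1}e_{j+1}$ in a way that survives the switch back to the raw coefficients $a_k$ requires carrying along the correct binomial weights, and the differentiation-plus-discriminant argument must be executed carefully to close cleanly at each step of the reduction. Part (2), by contrast, is elementary once positivity rules out division-by-zero and degenerate cases in the ratio argument.
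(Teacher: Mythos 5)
The paper does not prove this lemma; it simply cites it as \cite[Lemma~7.1]{branden-unimodality_log_concavity}, so there is no in-paper proof to compare against. Your argument is, however, essentially the standard textbook derivation (Newton's inequalities for part~(1), the ratio argument for part~(2)), and it is correct modulo one imprecise claim. You assert that because $\binom{n}{j}^2/\bigl(\binom{n}{j-1}\binom{n}{j+1}\bigr) = (j+1)(n-j+1)/\bigl(j(n-j)\bigr) > 1$, Newton's inequality is ``strictly stronger'' than plain log-concavity and ``translates directly'' into $a_k^2\geq a_{k-1}a_{k+1}$. That direction of the implication only holds when $a_{k-1}a_{k+1}\geq 0$: if $a_{k-1}a_{k+1}<0$, multiplying by a constant $c>1$ makes the right-hand side \emph{smaller}, so Newton's bound is actually weaker in that regime. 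The missing (one-line) case analysis is harmless because $a_{k-1}a_{k+1}<0$ makes $a_k^2\geq a_{k-1}a_{k+1}$ trivially true, but as written the reduction is not airtight. Part~(2) is clean, including the implicit treatment of the boundary cases where the ratio sequence never drops below $1$ or starts below $1$.

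One more small caveat for a fully rigorous write-up: the differentiation-plus-reversal route to Newton's inequality needs a word about the reversal $x^np(1/x)$ when $a_0=0$ (the reversed polynomial then drops degree), and about repeated roots when invoking Rolle; both are routine but should be acknowledged.
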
 
	
	For any real and symmetric matrix $M$, by the Spectral Theorem, 
	$\charpoly_M(x)$ is real rooted and so the first part of 
	Lemma \ref{unimodal-condition} is  trivially satisfied.
	When all eigenvalues of $M$ are negative, it is easy to see that all
	coefficients of $\charpoly_M(x)$ are positive.  
	
	When $M = (m_{i,j})_{1 \leq i,j \leq n}$ is an $n \times n$ real, symmetric 
	matrix with $m_{i,i} = 0$ for $1 \leq i \leq n$, and if $M$ has 
	exactly one positive eigenvalue then, the 
	proof of Theorem \ref{thm:edelberg-garey-graham} can be extended
	to show that almost all the coefficients of $\charpoly_M(x)$ 
	are negative.  This is the main point of 
	\cite[Lemma 4.1]{Abiad-Brimkov-Hayat-Khramova-Koolen-dist-char-poly}.

	Below, we mildly generalize this to include real, symmetric 
	matrices which have a non negative trace.  Recall 
	the inertia of a real symmetric matrix $M$ is the triple 
	$\Inertia(M) = \big(n_{+}(M),n_{-}(M),n_{0}(M)\big)$.  Here, $n_{+}(M),$ 
	$n_{-}(M)$ and $n_{0}(M)$ denote the number of positive, negative and zero 
	eigenvalues of $M$ respectively.

	\begin{theorem}
\label{coeff-same-sign}
Consider a  real, symmetric matrix $M$ of order $n$ with $\Tr(M)\geq 0$ and $\charpoly_M(x)=\sum_{k=0}^n a_kx^k$. 
%Suppose that the characteristic polynomial $\charpoly_M(x)=\sum_{k=0}^n 
%c_kx^k$ of $M$ is real-rooted and 
Let $\Inertia(M)=(1,r-1,n-r)$, with $2\leq r\leq n$. If 
$\Tr(M)=0$, then $a_k<0$ when $n-r\leq k\leq n-2$ and 
if $\Tr(M)> 0$, then $a_k<0$ when $n-r\leq k\leq n-1$.  
\end{theorem}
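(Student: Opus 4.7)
The plan is to reduce everything to elementary symmetric polynomials of the eigenvalues of $M$ and exploit the sign structure forced by the inertia hypothesis. Since $M$ is real symmetric, I first arrange its eigenvalues as $\mu > 0$ (one copy), $-\nu_1, \ldots, -\nu_{r-1}$ with each $\nu_i > 0$, and $0$ with multiplicity $n - r$. From $\charpoly_M(x) = \prod_i (x - \lambda_i) = \sum_k (-1)^{n-k} e_{n-k}(\lambda)\, x^k$, the zero eigenvalues drop out of every $e_j(\lambda)$, so writing $j = n - k$ and using $e_j(-\nu) = (-1)^j e_j(\nu)$ gives, after a short sign-chase,
\[
a_k \;=\; e_j(\nu_1, \ldots, \nu_{r-1}) \;-\; \mu\, e_{j-1}(\nu_1, \ldots, \nu_{r-1}),
\]
with the conventions $e_0 \equiv 1$ and $e_\ell \equiv 0$ for $\ell > r - 1$. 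In particular $a_k = 0$ for $k < n - r$, consistent with $\rk(M) = r$.

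The core step is the elementary identity
\[
e_1(\nu)\, e_{j-1}(\nu) \;=\; j\, e_j(\nu) \;+\; \sum_{\substack{|S| = j-1 \\ i \in S}} \nu_i^2 \prod_{k \in S \setminus \{i\}} \nu_k,
\]
obtained by expanding $\bigl(\sum_i \nu_i\bigr)\bigl(\sum_{|S|=j-1} \prod_{k \in S} \nu_k\bigr)$ and splitting each term according to whether the $e_1$-index lies in the chosen $(j-1)$-subset. Since every $\nu_i$ is strictly positive, the right-hand side is strictly greater than $e_j(\nu)$ whenever $j \geq 2$, giving $e_1(\nu)\, e_{j-1}(\nu) > e_j(\nu)$. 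The trace hypothesis $\Tr(M) \geq 0$ rewrites as $\mu \geq \sum \nu_i = e_1(\nu)$, so $\mu\, e_{j-1}(\nu) > e_j(\nu)$, and therefore $a_k < 0$, for every $j$ with $2 \leq j \leq r - 1$.

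Two boundary values of $j$ remain. For $j = r$ the formula collapses to $a_k = -\mu \prod_i \nu_i < 0$ unconditionally, which together with the previous step establishes $a_k < 0$ throughout $n - r \leq k \leq n - 2$, independent of the value of $\Tr(M)$. For $j = 1$ (equivalently $k = n - 1$) the formula gives $a_{n-1} = e_1(\nu) - \mu = -\Tr(M)$, which vanishes when $\Tr(M) = 0$ but is strictly negative when $\Tr(M) > 0$; this accounts precisely for extending the range to $n - r \leq k \leq n - 1$ in the positive-trace case. I do not foresee a serious obstacle: the only step that requires care is the sign bookkeeping when passing from $e_{n-k}(\lambda)$ to $e_j(\nu)$, while the combinatorial identity above is a standard Newton-type calculation.
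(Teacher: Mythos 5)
Your proof is correct and follows essentially the same route as the paper's: factor out the $x^{n-r}$ from the zero eigenvalues, write $\charpoly_M(x) = x^{n-r}(x-\mu)\prod_{i}(x+\nu_i)$, and compare $\mu\,e_{j-1}(\nu)$ with $e_j(\nu)$ using the hypothesis $\mu = e_1(\nu) + \Tr(M) \geq e_1(\nu)$. The one place you go further than the paper is in making the inequality $e_1(\nu)\,e_{j-1}(\nu) > e_j(\nu)$ (for $j\geq 2$) explicit via the Newton-type expansion; the paper asserts the equivalent step $g_k - g_1 g_{k-1} < 0$ without justification, so your version actually closes a small gap in their argument.
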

	\begin{proof}
		%Since $M$ is real and symmetric, let its real eigenvalues
		%be $\lambda_1, \lambda_2, \ldots, \lambda_n$.   
		Let the non zero eigenvalues of $M$ be $\lambda_1, -\lambda_2, -\lambda_3,
		\ldots, -\lambda_r$ 
		and let the eigenvalue $0$ occur with multiplicity $n-r$.  Here, we 
		assume that $\lambda_i > 0$ when $1 \leq i \leq r$ and that the 
		$\lambda_i$'s need not be distinct.  Define $g_0 = 1$ and
		when $k \geq 1$, define 
		$g_k$ to be the sum of all $k$-fold products of $\lambda_2, \ldots, 
		\lambda_n$.  Clearly, $g_k > 0$ when $1 \leq k \leq r$.  Further
		\begin{eqnarray}
			\charpoly_M(x) & = & x^{n-r}(x-\lambda_1)\prod_{i=2}^r(x+\lambda_i)  =  x^{n-r}(x - \lambda_1)\Bigg( \sum_{k=0}^{r-1} g_k x^{r-1-k}\Bigg) \nonumber \\
			& = & \Bigg( x^n + \sum_{k=1}^{r-1} \big( g_k - \lambda_1g_{k-1} \big) x^{n-k} -\lambda_1 g_{n-1} x^{n-r} \Bigg) \label{eqn:imp}
		\end{eqnarray}
		Since $\lambda_1=g_1+t$, $g_k - \lambda_1g_{k-1}=g_k - (g_1+t)g_{k-1}
		=(g_k-g_1g_{k-1})-tg_{k-1}<0 $ as we have $t\geq 0$ and 
		$ -\lambda_1 g_{n-1}= -(g_1+t) g_{n-1}<0$. 
		Moreover, $c_{n-1}=-\Tr(M)=-t$.  Hence, 
		when $n-r\leq k\leq n-1$ and  $t>0$, we have $a_k<0$.
		Likewise, when $n-r\leq k\leq n-2$ and $t=0$, we have 
		$a_k<0$, completing the proof.
	\end{proof}
	
	The following corollary of Theorem \ref{coeff-same-sign} can be drawn.
	
\begin{corollary}
\label{cor:dist-matrix-unim}
		Let $M$ be a real and symmetric matrix of order $n$ with $\charpoly_M(x)=\sum_{k=0}^n a_kx^k$ and $\Inertia(M)=(1,n-1,0)$. 
		\begin{enumerate}
			\item If $\Tr(M)=0$, then the sequence $|a_0|, |a_1|, \hdots,
			|a_{n-2}|$ 
of the  absolute values of its coefficients from $\charpoly_M(x)$ 
is log-concave and unimodal.
			\item If $\Tr(M)>0$, then the sequence $|a_0|, |a_1|, \hdots,
			|a_{n-2}|, |a_{n-1}|$ of the  absolute values of its coefficients 
from $\charpoly_M(x)$ is log-concave and unimodal.
		\end{enumerate}  
	\end{corollary}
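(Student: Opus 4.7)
The plan is to combine the sign information from Theorem \ref{coeff-same-sign} with real-rootedness (via Lemma \ref{unimodal-condition}(1)) to get log-concavity of the full signed sequence, then transfer it to the absolute-value sequence by observing that on the truncated range all relevant coefficients share the same sign.

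First, since $M$ is real and symmetric, the Spectral Theorem guarantees that $\charpoly_M(x)$ has only real roots. Lemma \ref{unimodal-condition}(1) then yields that the full sequence $a_0,a_1,\ldots,a_n$ is log-concave, that is, $a_i^2 \geq a_{i-1}a_{i+1}$ for every $1 \leq i \leq n-1$. Next, since $\Inertia(M)=(1,n-1,0)$, Theorem \ref{coeff-same-sign} applies with $r=n$: when $\Tr(M)=0$ it forces $a_k<0$ for $0\leq k \leq n-2$, and when $\Tr(M)>0$ it forces $a_k<0$ for $0\leq k \leq n-1$. In either case, on the claimed range every $|a_k|$ is strictly positive.

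The transfer to absolute values is then the routine step. For any index $i$ at which the three coefficients $a_{i-1},a_i,a_{i+1}$ are all negative, both sides of $a_i^2 \geq a_{i-1}a_{i+1}$ are non-negative and equal to $|a_i|^2$ and $|a_{i-1}||a_{i+1}|$ respectively, so log-concavity passes verbatim from $(a_k)$ to $(|a_k|)$. In Case 1 (where the sequence is $|a_0|,\ldots,|a_{n-2}|$) the required indices are $1 \leq i \leq n-3$; in Case 2 (where the sequence is $|a_0|,\ldots,|a_{n-1}|$) the required indices are $1 \leq i \leq n-2$. In both cases, all three neighbors lie in the negative-coefficient range determined by Theorem \ref{coeff-same-sign}, so the inequality transfers. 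Finally, being positive and log-concave, Lemma \ref{unimodal-condition}(2) delivers unimodality in both parts.

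The only point that needs care, and is really the single obstacle, is index bookkeeping at the right endpoint: one must verify that the truncation stops precisely where the sign of $a_k$ ceases to be guaranteed negative, so that no log-concavity check on the truncated sequence ever involves $a_{n-1}$ (which is $-\Tr(M)$ and hence $0$ in Case 1) or the leading coefficient $a_n = 1$, either of which would break the sign-matching argument. The two cases in the statement are calibrated exactly to respect this boundary.
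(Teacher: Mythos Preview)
Your proof is correct and follows essentially the same approach as the paper: real-rootedness gives log-concavity of $(a_k)$, Theorem \ref{coeff-same-sign} forces the relevant coefficients negative, and then positivity plus log-concavity of the absolute values yields unimodality via Lemma \ref{unimodal-condition}. Your version is in fact a bit more careful than the paper's, which simply asserts that negativity of all the $a_k$ on the truncated range makes $(|a_k|)$ log-concave without spelling out the endpoint bookkeeping you describe.
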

	\begin{proof}
		Since $M$ is a real, symmetric matrix, 
	$\charpoly_M(x)$ is real-rooted and hence by Lemma \ref{unimodal-condition}, it 
follows that the sequence $a_0,a_1,\hdots,a_{n-2},a_{n-1}, a_n$ is log-concave.

1. By Theorem \ref{coeff-same-sign}, we get $a_k<0$ when $0\leq k\leq n-2$. Since 
all terms $a_0,a_1,\hdots,a_{n-2}$ are negative, the sequence comprising their absolute 
values $(|a_k|)_{k=0}^{n-2}$ is log-concave and positive.  By Lemma 
\ref{unimodal-condition},  $|a_0|, |a_1|, \hdots, |a_{n-2}|$ is unimodal.

2. By Theorem \ref{coeff-same-sign}, we have $a_k<0$ for $0\leq k\leq n-1$.  As 
all the terms $a_0,a_1,\hdots,a_{n-1}$ are negative, the sequence 
comprising their absolute values $(|a_k|)_{k=0}^{n-1}$ is log-concave and positive. 
By Lemma \ref{unimodal-condition}, $|a_0|, |a_1|, \hdots,
		|a_{n-1}|$ is unimodal.
		%The proof is clear from Lemma \ref{unimodal-condition} and Theorem \ref{coeff-same-sign}.
		%\blue{Perhaps give a little more detail.  For the third result, since $c_n=1$ and $c_{n-1}=\Tr(M)\geq 1$, unimodality follows.}
	\end{proof}

	% section 3
	
\section{The $\Min_T$ matrix of a tree $T$}
\label{sec:min4pc-results}
Let $T= \big(V,E\big)$ be a tree with $V=\{1,2,\hdots,n\}$.
Further, let $E=\{e_1,e_2,\hdots,e_{n-1}\}$. If $i,j \in V$ with $f=\{i,j\} 
	\not\in E$ be a non-edge of $T$ with $d_{i,j} = d > 1$, 
	then Bapat and Sivasubramanian in \cite{bapat-siva-snf-4PC} showed 
	that $B = E \cup \{f\}$ is a basis of $\Min_T$'s row space. 
	Consider the $n \times n$ matrix $N = \Min_T[B,B]$ obtained by 
	restricting the matrix $\Min_T$ to its rows and columns in $B$. 
We start this section with the following result.
	
	\begin{theorem}\label{thm:inertia-min4pc}
Let $N = \Min_T[B,B]$ be the matrix as described above. Then, $N$ has 
$(n-1)$ negative eigenvalues and  one positive eigenvalue.
	\end{theorem}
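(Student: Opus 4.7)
The plan is to apply Haynsworth's inertia additivity formula to $N$ after explicitly computing its diagonal block on $E$ and its off-diagonal column at $f$.

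First I would pin down $N[E,E]$. For any two distinct edges $e_1, e_2 \in E$, a short case analysis on their relative position (sharing a vertex, or disjoint with nearest endpoints at distance $k \geq 1$) shows that in each case two of the three sums in the relevant set $S_{i,j,k,l}$ equal $2k+2$ and the third equals $2$, so the minimum is always $2$. With $N_{e,e}=0$, this identifies
\begin{equation*}
N[E,E] \;=\; 2\bigl(J_{n-1} - I_{n-1}\bigr),
\end{equation*}
a well-known matrix with inertia $(1, n-2, 0)$ and inverse $\tfrac{1}{2}\bigl(\tfrac{1}{n-2} J_{n-1} - I_{n-1}\bigr)$.

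Next I would determine the column $v := N[E, \{f\}]$ using the unique $i$-$j$ path $P_f$ in $T$, which has exactly $d$ edges. For each edge $e = \{a,b\} \in E$ I would apply the four-point condition to $\{a,b,i,j\}$: if $e$ lies on $P_f$ the computation yields $N_{e,f} = d-1$, and if $e$ is off $P_f$ (either pendant at a vertex of $P_f$ or entirely in a branch) it yields $N_{e,f} = d+1$. Hence $v$ has $d$ coordinates equal to $d-1$ and $(n-1)-d$ coordinates equal to $d+1$, while $N_{f,f}=0$.

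By Haynsworth's additivity, $\Inertia(N) = \Inertia(N[E,E]) + \Inertia\bigl(N/N[E,E]\bigr)$, where the Schur complement reduces to the scalar
\begin{equation*}
N/N[E,E] \;=\; -v^T (N[E,E])^{-1} v \;=\; \tfrac{1}{2}\Bigl(\|v\|^2 - \tfrac{s^2}{n-2}\Bigr), \qquad s := \mathbf{1}^T v.
\end{equation*}
From the explicit form of $v$ one computes $s = (n-3)d + (n-1)$ and $\|v\|^2 = (n-5)d^2 + 2(n-1)d + (n-1)$, which simplify to the clean identity
\begin{equation*}
s^2 - (n-2)\|v\|^2 \;=\; (n-1)(d-1)^2 \;>\; 0,
\end{equation*}
since $d > 1$ and $n \geq 3$. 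Therefore the Schur complement is strictly negative, contributing inertia $(0,1,0)$, and $\Inertia(N) = (1, n-2, 0) + (0,1,0) = (1, n-1, 0)$, as required. The only delicate part of the argument is the tree-metric case analysis identifying the entries of $v$; once that is in hand, the result falls out of the one-line algebraic identity above.
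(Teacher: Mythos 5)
Your proposal takes essentially the same route as the paper: both apply the Haynsworth/Schur-complement inertia additivity formula to the $2\times 2$ block decomposition of $N$ with diagonal $E$-block $K = 2(J - I)$, whose inertia is $(1,n-2,0)$, and then show the scalar Schur complement of $K$ in $N$ is strictly negative. The difference is one of exposition rather than idea: the paper outsources all three ingredients (the identification $N[E,E] = 2(J-I)$, the inverse $K^{-1} = \tfrac12(\tfrac{1}{n-2}J - I)$, and the value of the Schur complement) to citations from Bapat and Sivasubramanian, whereas you derive them from scratch via the four-point case analysis and a direct algebraic computation. Your computation of $v$ (entries $d-1$ on the $d$ path edges and $d+1$ elsewhere), of $s=(n-3)d+(n-1)$ and $\|v\|^2=(n-5)d^2+2(n-1)d+(n-1)$, and the resulting identity $s^2-(n-2)\|v\|^2=(n-1)(d-1)^2$ all check out, giving the explicit Schur complement $-\tfrac{(n-1)(d-1)^2}{2(n-2)} < 0$. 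One small point worth noting: the paper quotes the Schur complement as $-\tfrac{n-1}{2(n-2)}$ with no $(d-1)^2$ factor; your self-contained calculation shows the general value carries the factor $(d-1)^2$, which reduces to the paper's value only when $d=2$. Since only the sign matters for the inertia argument, both versions yield $\Inertia(N)=(1,n-1,0)$, but your derivation is the more transparent one and exposes the dependence on $d$. Your version buys self-containment and an explicit formula; the paper's buys brevity by leaning on prior work.
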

	\begin{proof}
		In our proof, we use the Schur complement formula for inertia.  The matrix $N$ 
		restricted to the 
		rows and columns indexed by $E$ is $K = 2(J-I)$ (see \cite[Lemma 3]{bapat-siva-snf-4PC}) 
		whose inverse is also presented in \cite[Lemma 4]{bapat-siva-snf-4PC}. Clearly, 
$K$ has $(n-2)$ negative eigenvalues and one positive eigenvalue.
		Further, let $x_f$ be an $(n-1)$-dimensional 
		column vector with its columns indexed by $e \in E$ with its 
		$e$-th component $x_f(e) = \Min_T(f,e)$.  Then, the Schur complement of $K$ in 
		$N$ equals $0 - x_f^tK^{-1}x_f$.  By 
		\cite[Corollary 7]{bapat-siva-snf-4PC}, this equals $p = -\frac{n-1}{2(n-2)}$.  Since 
		$\Inertia(N) = \Inertia(K) + \Inertia(p)$, we get that $N$ has only 
		one positive eigenvalue and $n-1$ negative eigenvalues.  
	\end{proof}

	%Theorem \ref{thm:inertia-min4pc} ensures that Corollary \ref{cor:dist-matrix-unim} can be used.  To find bounds on the peak location, one typically needs some coefficients of $\charpoly_M(x)$.  Below, using equitable partitions, we determine $\charpoly_M(x)$ explicitly.\blue{The book by Brouwer and Haemmers \cite[Chapter ???]{brouwers-haemers-spectra} is a good reference for this topic.}}

	To give our proof of Theorem \ref{thm:unimodal-log-conc-min4pc}, we 
	compute $\charpoly_N(x)$  
	%the characteristic polynomial of $N = \Min_T[B,B]$ 
	using equitable partitions.  We first recall the definition 
	of an equitable partition of a matrix $M$.
	
	\begin{definition}[Equitable Partition]
	Let $M$ be an $n \times n$ real, symmetric matrix and index the rows 
	and columns of $M$ by elements of the set $X$. Let
	$\Pi=\{X_1,X_2,\hdots,X_p\}$ be a partition of the set $X$ and let $M$ 
be partitioned according to $\Pi$ as
	\[M=\left( {\begin{array}{cccc}
			M_{11} & M_{12} &\hdots & M_{1p}\\
			M_{21} & M_{22} &\hdots & M_{2p}\\
			\vdots &\vdots & \ddots & \vdots\\
			M_{p1} & M_{p2}& \hdots &M_{pp}\\
	\end{array} } \right).\]
	Here, $M_{ij}$ denotes the block submatrix of $M$ induced  
	by the rows in $X_i$ and the columns in $X_j$. 
	If the row sum of each block $M_{ij}$ is a constant, 
	then the partition $\Pi$ is called an equitable partition.
	Let $q_{ij}$ denote
	the average row sum of $M_{ij}$.   The matrix $Q=(q_{ij})$ 
	is called the quotient matrix of $M$ with respect to $\Pi$. 
	\end{definition}
	
	Next, we state a well-known result (see \cite[Lemma 2.3.1]{brouwers-haemers-spectra})
	connecting the spectrum of a quotient matrix arising from an equitable partition
	to the spectrum of the original matrix.
	
	\begin{lemma}
	\label{lem:quo-spec}
	Let $Q$ be a quotient matrix of any real, symmetric, square matrix $M$ arising from 
	an equitable partition. Then, all eigenvalues of $Q$ are eigenvalues of $M$.
	\end{lemma}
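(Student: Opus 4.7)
The plan is to use the characteristic matrix of the partition to intertwine the actions of $M$ and $Q$. I would first introduce the $n \times p$ characteristic matrix $P$ of $\Pi = \{X_1, X_2, \ldots, X_p\}$, whose $(i,j)$-entry is $1$ if the $i$-th index of $M$ lies in $X_j$ and $0$ otherwise. The columns of $P$ are indicator vectors of pairwise disjoint, non-empty blocks, so they are linearly independent and $P$ has full column rank $p$; in particular $Pv = 0$ forces $v = 0$.

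The next step, which carries essentially all the content, is to establish the matrix identity $MP = PQ$. For this I would compute the $(i,j)$-entry of $MP$ as the sum of the entries of the $i$-th row of $M$ across the columns indexed by $X_j$; this is precisely the $X_j$-row-sum of the block of $M$ containing row $i$. By the equitable partition hypothesis, this row-sum is constant across all $i$ lying in a fixed block $X_k$, and by definition this common value is $q_{kj}$. On the other hand, for $i \in X_k$ the $(i,j)$-entry of $PQ$ equals $q_{kj}$, so $MP$ and $PQ$ agree entrywise.

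With the intertwining identity in hand, the conclusion is immediate. If $\lambda$ is an eigenvalue of $Q$ with eigenvector $v \neq 0$, then
\begin{equation*}
M(Pv) \;=\; (MP)v \;=\; (PQ)v \;=\; P(Qv) \;=\; \lambda (Pv).
\end{equation*}
Since $P$ has full column rank, $Pv$ is nonzero, so $Pv$ is an eigenvector of $M$ for the same eigenvalue $\lambda$. Hence every eigenvalue of $Q$ is an eigenvalue of $M$, as desired.

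The main (and really only) obstacle is the careful bookkeeping in verifying $MP = PQ$; once that identity is established, the lifting of eigenvectors from $Q$ to $M$ is a one-line calculation. Notably, symmetry of $M$ is not actually needed for the direction we are proving, although it plays a role in the sharper statement (which we do not need here) that each eigenvalue of $Q$ inherits its algebraic multiplicity inside the spectrum of $M$.
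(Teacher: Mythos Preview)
Your proof is correct and is exactly the standard argument: define the characteristic matrix $P$ of the partition, verify the intertwining relation $MP = PQ$ from the equitable-partition hypothesis, and lift eigenvectors of $Q$ to eigenvectors of $M$ via $v \mapsto Pv$, using that $P$ has full column rank. The paper does not supply its own proof of this lemma but simply cites it as a well-known result from Brouwer and Haemers, \emph{Spectra of Graphs}, Lemma~2.3.1; the proof there is essentially identical to yours.
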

	
	We next find the spectra of $\Min_T[B,B]$ for any tree $T$ of order $n$.
	
	\begin{theorem}\label{spectra-min4pc}
	For any tree $T$ on $n$ vertices, the eigenvalues of $\Min_T[B,B]$ are $-2$ with 
	multiplicity $n-3$, and the three roots of the cubic polynomial 
	$$g(x)=x^3-(2n-6)x^2-(nd^2-5d^2+2nd-2d+5n-9)x-2(d-1)^2(n-1).$$
	\end{theorem}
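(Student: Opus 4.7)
The approach is via the equitable partition machinery of Lemma \ref{lem:quo-spec}. I would partition $B = E \cup \{f\}$ into three parts: $X_1$, the $d$ edges of $T$ that lie on the unique $i$-to-$j$ path $P$; $X_2$, the remaining $n - 1 - d$ edges of $T$; and $X_3 = \{f\}$. The first step is to record all needed entries of $N$. From $N[E,E] = 2(J-I)$ (by \cite{bapat-siva-snf-4PC}), each pair of distinct edges contributes $2$. For $e = \{a,b\} \in X_1$ with $a$ on the $i$-side and $b$ on the $j$-side of $P$, a short inspection of $S_{i,j,a,b}$ gives $d_{i,a} + d_{j,b} = d - 1$, making $\Min_T(f,e) = d - 1$. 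For $e = \{a,b\} \in X_2$, all three entries of $S_{i,j,a,b}$ are at least $d+1$ (with $d_{i,j} + d_{a,b}$ already equal to $d+1$), so $\Min_T(f,e) = d+1$. Finally $\Min_T(f,f) = 0$.

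These constant block row-sums confirm the partition is equitable, with quotient matrix
\[
Q = \begin{pmatrix} 2(d-1) & 2(n-1-d) & d-1 \\ 2d & 2(n-d-2) & d+1 \\ d(d-1) & (n-1-d)(d+1) & 0 \end{pmatrix}.
\]
By Lemma \ref{lem:quo-spec}, the three roots of $\det(xI - Q)$ are eigenvalues of $N$. The most mechanical step is to expand $\det(xI - Q)$ and verify that it equals $g(x)$; this is a routine but coefficient-heavy bookkeeping computation, and is the step most prone to arithmetic slip.

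The remaining $n - 3$ eigenvalues are produced by vectors orthogonal to the partition. I would take the $(n-3)$-dimensional space of vectors $v = (w, 0)^T \in \RR^n$ whose $E$-part $w$ satisfies $\sum_{e \in X_1} w_e = 0$ and $\sum_{e \in X_2} w_e = 0$. Since then $\sum_{e \in E} w_e = 0$, the action of $K = 2(J - I)$ sends $w$ to $-2 w$, and the $f$-entry of $Nv$ is $(d - 1)\sum_{X_1} w_e + (d+1)\sum_{X_2} w_e = 0$. Thus $Nv = -2v$, so $-2$ is an eigenvalue of $N$ with multiplicity at least $n - 3$.

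Together, the three roots of $g$ and $-2$ of multiplicity $n-3$ account for all $n$ eigenvalues of the order-$n$ matrix $N$. The only subtlety is the degenerate case $d = n-1$ (when $T$ is a path and $f$ joins its endpoints), where $X_2 = \emptyset$ shrinks the orthogonal-complement construction to dimension $n-2$; in that case a direct check confirms $g(-2) = 0$, so the cubic itself supplies the missing $-2$ and the spectral count is preserved.
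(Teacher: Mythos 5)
Your proposal is correct and mirrors the paper's proof precisely: the same partition of $B$ into path edges $E_1$, off-path edges $E_2$, and $\{f\}$; the same quotient matrix; the same cubic $g$ via Lemma \ref{lem:quo-spec}; and the same $(n-3)$-dimensional $(-2)$-eigenspace, which you describe by two linear constraints while the paper exhibits explicit difference vectors $e(j,j+1)$. One thing you do more carefully than the paper: you flag the degenerate case where $T$ is a path and $f$ joins its endpoints (so $d = n-1$, $E_2 = \emptyset$, and $g(-2) = 0$), in which the paper's assertion that $g(-2) \neq 0$ actually fails, though the theorem's conclusion still holds since the cubic itself supplies the extra copy of $-2$.
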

	\begin{proof}
	Let $f = \{i,j\}$ with $d_{i,j} = d$.  By relabelling, we can assume 
	that the edges $e_1, e_2, \ldots, e_d$ are on the $ij$-path in $T$.
	Let $E_1=\{e_1,e_2,\hdots,e_d\}$ and $E_2=\{e_{d+1},\hdots,e_{n-1}\}$. 
	Let $J$ denote a matrix all of whose entries are 1 (of appropriate
	dimension) and $I$ denoting the identity matrix (of appropriate
	dimension), and $\bone$ denote a column vector all of whose components
	are 1 (of appropriate dimension).  With these, $N=\Min_T[B,B]$ can be written as
	\[ N=
	\begin{blockarray}{cccc}
		& E_1 & E_2  & f\\
		\begin{block}{c(ccc)}
			E_1 & 2(J-I)  & 2J & (d-1) \bone \\
			E_2 & 2J & 2(J-I) & (d+1) \bone \\
			f & (d-1)\bone^t & (d+1)\bone^t &  0\\
		\end{block}
	\end{blockarray} ~.\]
	
	Let $e(i,j)$ denote the $n$-dimensional column vector that has its $i$-th component 
	$1$, its $j$-th component $-1$ and all other components as $0$. 
	If $S=\{e(j,j+1):1\leq j\leq d-1\}\cup \{e(j,j+1):d+1\leq j\leq n-2\}$, then for 
	any $\textbf{x}\in S$, we have $N\textbf{x}=-2\textbf{x}$. 
	Note that $|S|=n-3$, and that all vectors in $S$ are linearly independent. 
	Therefore, $-2$ is an eigenvalue of $N$ with multiplicity at least $n-3$.
	
	Recall that $E_1=\{e_1,e_2,\hdots,e_d\}$ and $E_2=\{e_{d+1},\hdots,e_{n-1}\}$. 
	Then it is 
	easy to check that $\Pi_1=E_1 \cup E_2\cup \{f\}$ is an equitable partition of $N$ 
	with quotient matrix 
	\[Q_{\Pi_1}=
	\left ( {\begin{array}{ccc}
			2(d-1) & 2(n-d-1) & d-1\\
			2d & 2(n-d-2) & d+1 \\
			d(d-1) & (d+1)(n-d-1) & 0 \\
	\end{array} } \right).\]
	By a direct calculation, the characteristic polynomial of $Q_{\Pi_1}$ is 
	$$g(x)=x^3-(2n-6)x^2-(nd^2-5d^2+2nd-2d+5n-9)x-2(d-1)^2(n-1).$$
	By Lemma \ref{lem:quo-spec}, all eigenvalues of $Q_{\Pi_1}$ are 
	eigenvalues of $N$ as well. Since $g(-2)\neq 0$, the eigenvalues of $N$ are $-2$ 
	with multiplicity $n-3$, and the roots of $g(x)=0$.  This completes the proof.   
	\end{proof}
	
	We proceed to give our proof of Theorem \ref{thm:unimodal-log-conc-min4pc}.

	\textbf{Proof of Theorem \ref{thm:unimodal-log-conc-min4pc}}: 
	For a tree $T$ of order $n$, by Theorem \ref{thm:inertia-min4pc}, 
	we have $\Inertia(\Min_T[B,B])=(1,n-1,0)$. Hence, by 
	Corollary \ref{cor:dist-matrix-unim}, the sequence $|a_0|, |a_1|, 
	\cdots,|a_{n-2}|$ is unimodal and log-concave.
	
Now, we have to find the peak location of this unimodal sequence. By 
Theorem \ref{spectra-min4pc}, it follows that the characteristic polynomial 
of $\Min_T[B,B]$ is 
	$i\displaystyle f(x)=(x+2)^{n-3}(x^3+b_1x^2+c_1x+d_1),$
	where $b_1=-(2n-6)$, $c_1=-(nd^2-5d^2+2nd-2d+5n-9)$ and $d_1=-2(d-1)^2(n-1)$. 
	%Multiplying the binomial expansion of $(x+2)^{n-3}$ by $(x^3+b_1x^2+c_1x+d_1)$, 
	%and combining the coefficients of the same power, we have 
	%
	%\begin{align*}
	% f(x)&=\bigg[d_1\binom{n-3}{0}2^{n-3}\bigg]x^0+\bigg[2c_1\binom{n-3}{0}+d_1\binom{n-3}{1}\bigg]2^{n-4}x^1\\
	% &+\bigg[4b_1\binom{n-3}{0}+2c_1\binom{n-3}{1}+d_1\binom{n-3}{2}\bigg]2^{n-5}x^2\\
	% &+ \sum_{k=3}^{n-3} \bigg[8\binom{n-3}{k-3}+4b_1\binom{n-3}{k-2}+2c_1\binom{n-3}{k-1}+d_1\binom{n-3}{k}\bigg]2^{n-k-3}x^k\\
	% &+\bigg[4\binom{n-3}{n-5}+2b_1\binom{n-3}{n-4}+c_1\binom{n-3}{n-3}\bigg]x^{n-2}\\
	%  &+\bigg[2\binom{n-3}{n-4}+b_1\binom{n-3}{n-3}\bigg]x^{n-1}+\binom{n-3}{n-3}x^n
	%\end{align*}
	Let $a_k$ be the coefficient of $x^k$ in $f(x)$.  One can check that 
	%Therefore, from the above expression, we have
	\begin{align*}
	a_0&=d_1\binom{n-3}{0}2^{n-3}=-2(d-1)^2(n-1)2^{n-3}, \\ a_1&=\bigg[2c_1\binom{n-3}{0}+d_1\binom{n-3}{1}\bigg]2^{n-4}\\
	&=-\big[2(nd^2-5d^2+2nd-2d+5n-9)+2(d-1)^2(n-1)(n-3)\big]2^{n-4},\\ 
	a_2&=\Big[4b_1\binom{n-3}{0}+2c_1\binom{n-3}{1}+d_1\binom{n-3}{2}\Big]2^{n-5}\\
	&=-\big[8(n-3)+2(nd^2-5d^2+2nd-2d+5n-9)(n-3)\big]2^{n-5} \\
	&~~~+2^{n-5}\big[(d-1)^2(n-1)(n-3)(n-4)\big],\\
	a_{n-2}&=4\binom{n-3}{n-5}+2b_1\binom{n-3}{n-4}+c_1\binom{n-3}{n-3}\\
	&=-\big[2(n-3)(n-4)+(2n-6)(n-3)+(nd^2-5d^2+2nd-2d+5n-9)\big],\\
	a_{n-1}&=2\binom{n-3}{n-4}+b_1\binom{n-3}{n-3}=2(n-3)-2(n-3)=0,~~ a_n=1,
	\end{align*}
	and for $3\leq k\leq n-3$
	\begin{align*}
	a_k &=\bigg[8\binom{n-3}{k-3}+4b_1\binom{n-3}{k-2}+2c_1\binom{n-3}{k-1}+d_1\binom{n-3}{k}\bigg]2^{n-k-3} \\
	%&= \binom{n-3}{k-3}2^{n-k-3}\bigg(8+\frac{4b_1(n-k)}{k-2}+ \frac{2c_1(n-k)(n-k-1)}{(k-2)(k-1)}+\frac{d_1(n-k)(n-k-1)(n-k-2)}{(k-2)(k-1)k}\bigg)\\
	&= \binom{n-3}{k-3}2^{n-k-3} f_1(n,k), ~~ \text{where}\\
	f_1(n,k)&=8+\frac{4b_1(n-k)}{k-2}+ \frac{2c_1(n-k)(n-k-1)}{(k-2)(k-1)}+\frac{d_1(n-k)(n-k-1)(n-k-2)}{(k-2)(k-1)k}\\
	&=8-\frac{8(n-3)(n-k)}{k-2}- \frac{2(nd^2-5d^2+2nd-2d+5n-9)(n-k-1)(n-k)}{(k-1)(k-2)}\\
	&~~~~-\frac{2(d-1)^2(n-1)(n-k-2)(n-k-1)(n-k)}{k(k-1)(k-2)}.
	\end{align*}
	Thus, $|a_k|=\binom{n-3}{k-3}2^{n-k-3} |f_1(n,k)|$. When $n \geq 8$,
	it is easy to check that $|a_0|\leq |a_1|\leq |a_2|$ and $a_{n-3}\geq a_{n-2}$.  
	Further, when $3\leq k\leq n-3$, we have 
	\begin{align*}
	|a_k|- |a_{k-1}|
	&= \binom{n-3}{k-3}2^{n-k-3} |f_1(n,k)|-\binom{n-3}{k-4}2^{n-k-2} |f_1(n,k-1)|\\
	%&= \binom{n-3}{k-4}2^{n-k-3} \bigg[\bigg(\frac{n-k+1}{k-3}\bigg)|f_1(n,k)|-2|f_1(n,k-1)|\bigg]\\
	&= \binom{n-3}{k-4}2^{n-k-3} \bigg[\frac{8(n-2)(n^2-4kn+4n+3k^2-4k-1)}{(k-3)(k-2)}\\
	&~~~~~~~~+\frac{2(nd^2-5d^2+2nd-2d+5n-9)(n-k)(n-k+1)}{(k-2)(k-3)}\cdot \bigg(\frac{n-3k+1}{k-1}\bigg)\\
	&~~~~~~~~+\frac{2(d-1)^2(n-1)(n-k-1)(n-k)(n-k+1)}{(k-1)(k-2)(k-3)}\cdot \bigg(\frac{n-3k-2}{k}\bigg)\bigg].
	\end{align*}
	Hence, when $3\leq k\leq n-3$, one can verify that
	$|a_k|\geq |a_{k-1}|$ if and only if $k\leq \frac{n-2}{3}$ and 
	$|a_k|\leq |a_{k-1}|$ if and only if $k\geq \frac{n+4}{3}$. 
	Thus, when $n \geq 8$, we have  $|a_0|\leq |a_1|\leq |a_2| \leq \hdots 
	\leq |a_{\lfloor \frac{n-2}{3} \rfloor}|$ and 
	$|a_{\lceil \frac{n+4}{3}-1\rceil}| \geq |a_{\lceil \frac{n+4}{3}\rceil}| 
	\geq \hdots \geq |a_{n-3}|\geq |a_{n-2}|$. Hence, if $|a_t|=\max_{0\leq k\leq n-2} 
	|a_k|$, then $ \lfloor \frac{n-2}{3} \rfloor \leq t \leq \lceil \frac{n+1}{3}\rceil$. 
	This completes the proof.

	\section{Isometrically embedding $\Min_T$ in $\ell_1$ space}
	\label{sec:ell1}
	
	For any tree $T$ having $n$ vertices, we show that the $\Min_T$ matrix is
	isometrically embeddable in $\RR^{n-1}$ equipped with the $\ell_1$ norm.
	This gives an alternate proof that the  matrix $\Min_T$ has 
$r-1$ negative eigenvalues and one positive eigenvalue, where $r$ is the
	rank of $\Min_T$.
	
	Identify the $(n-1)$ dimensions of $\RR^{n-1}$ with the edges of $T$.  For $\{i,j\} \in 
	\VV$, the embedding  $\phi_{ \{i,j\} }$  maps $\{i,j\}$ to the 
	incidence vector of the unique path $P_{i,j}$ between $i$ and $j$ in $T$.
	We illustrate by an example.  Let $T$ be the tree given in Figure 
	\ref{fig:tree1} with edge set $E = \{e_1,e_2,e_3, e_4\}$.  
	For brevity, for $\{i,j\} \in \VV$, we omit the comma in the subscript 
	and denote $\phi_{i,j}$ in Figure \ref{fig:tree1} as $\phi_{ij}$.
	Let $f = \{1,4\} \in \VV$.  The set of edges on the path $P_{1,4}$ 
	between the vertices $1$ and $4$ is clearly $P_{1,4} = \{e_1,e_2 \}$ 
	and thus, the column vector $\phi_{14} = (1,1,0,0)^t$.   This column 
	vector $\phi_{1,4}$ is illustrated with a different colour 
	in Figure \ref{fig:tree1}.
	%For concreteness, let $B = E \cup \{f\}$. 
	
	\begin{figure}[h]
		\begin{minipage}{0.38\textwidth}
			\centerline{\includegraphics[scale=0.65]{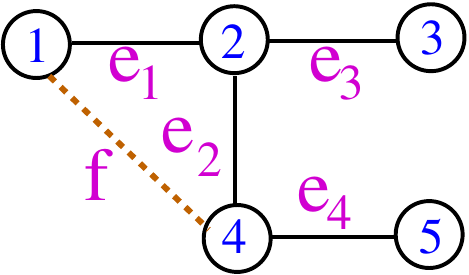}}
		\end{minipage}
		\begin{minipage}{0.6\textwidth}
			$\begin{array}{l|r|r|r|r|r| r|r|r|r|r|}
				& \phi_{12} & \phi_{13} & \phi_{14} & \phi_{15} & \phi_{23} & \phi_{24} & \phi_{25} & 
				\phi_{34} & \phi_{35} & \phi_{45} \\ \hline
				e_1 & 1 & 1 & \fby{1} & 1 &    0 & 0 & 0 &  0 & 0 &  0 \\
				e_2 & 0 & 0 & \fby{1} & 1 &    0 & 1 & 1 &  1 & 1 &  0 \\
				e_3 & 0 & 1 & \fby{0} & 0 &    1 & 0 & 0 &  1 & 1 &  0 \\
				e_4 & 0 & 0 & \fby{0} & 1 &    0 & 0 & 1 &  0 & 1 &  1 \\
			\end{array}$
		\end{minipage}
		\caption{A tree and its embedding.  Column $\phi_{14}$ is illustrated on the left.
		}
		\label{fig:tree1}
	\end{figure}
	
	%in Figure \ref{fig:tree1}.  
	%We now check for all $x,y \in B$, 
	%that $\Min_T(x,y) = \lVert \phi(x)-\phi(y) \rVert_1$.
	%It is clear that $\lvert \phi(e_r)- \phi(e_s) \rVert_1 = 2$
	%when $r \not= s$.  Further, recall $f = \{i,j\}$ with $\{i,j\} 
	%\not\in E$ and let $P_{i,j}$ be the path between $i$ and $j$ in $T$.  
	%It is easy to see that 
	%$\lVert \phi(f) - \phi(e_s)\rVert_1 =
	%\begin{cases}
	%1 \mbox{ if $e_s \in P_{i,j}$}\\
	%3 \mbox{ if $e_s \not\in P_{i,j}$}\\
	%\end{cases}
	%$ 
	
	%Note that in $n-1$ dimensions by tacitly identifying dimensions 
	%with the edges, for $x \in B$, 
	%$\phi(x)$ is the indicator vector of the edges
	%on the path between the end-points of $x$.  
	
	%In \cite[Theorem 5]{ali-jana-nagar-siva_4pc-edge_weighted}, 
	%it is proved that any set $B$ 
	%of the form $B = E \cup \{f\}$ where $f \not \in E$ is a basis
	%for the row space of $\Min_T$.
	
	%\begin{theorem}
	%\label{thm:l1_embedding}
	%Let $T$ be a tree on $n$ vertices. 
	%and let $B = E \cup \{f\}$
	%where $f = \{i,j\}$ and $f \not\in E$.  Let $\Min_T[B,B]$ be 
	%the matrix $\Min_T$ restricted to the rows and columns in $B$. 
	%Its matrix $\Min_T$ is isometrically $\ell_1$-embeddable in 
	%$\RR^{n-1}$.
	%\end{theorem}
	
	After seeing the example in Fig \ref{fig:tree1}, we are 
	now ready for the proof of Theorem \ref{thm:l1_embedding}.
	
	\begin{proof} (Of Theorem \ref{thm:l1_embedding}):
		We identify the $n-1$ dimensions of $\RR^{n-1}$ with 
		the edges of $T$.  
		Consider the embedding $\phi: \VV \rightarrow \RR^{n-1}$ 
		described above.  Thus $\phi_{i,j}$ is the incidence vector 
		of the path $P_{i,j}$.
		%For $x \in B$, let $x = \{i,j\}$ and let 
		%$P_{i,j}$ be the unique path between $i$ and $j$ in $T$.
		%Let $\phi(x)$ be the indicator vector of the edges on $P_{i,j}$.
		For all $i,j, s,t \in V(T)$, we show that 
		$\Min_T(\{i,j\}, \{s,t\}) = 
		\lVert \phi_{i,j} - \phi_{s,t} \rVert_1$.
		
		%It is clear that $\lVert \phi(e_r) - \phi(e_s)\rVert_1 = 
		We consider two cases depending on whether the path
		$P_{i,j}$ intersects the path $P_{s,t}$.
		
		{\bf Case 1, (when $P_{i,j} \cap P_{s,t} = \emptyset$): }  In this case,
		note that 
		$\lVert \phi_{i,j} - \phi_{s,t} \rVert_1 = d_{i,j} + d_{s,t}$.  
		If $\alpha$ is a vertex lying on $P_{i,j}$ and $\beta$ is a vertex on
the path $P_{s,t}$  are chosen such that $d_{\alpha,\beta}$ is the smallest
among choices of vertices $\alpha$ on the path $P_{i,j}$ and $\beta$ on
the path $P_{s,t}$, then as $d_{\alpha,\beta} \geq 0$, we have 
$d_{i,j} + d_{s,t} \leq d_{i,t} + d_{j,s}$ and $d_{i,j} + d_{s,t} 
\leq d_{i,s} + d_{j,t}$.  Thus, 
$\Min_T(\{i,j\}, \{s,t\}) = d_{i,j} + d_{s,t}= 
		\lVert \phi_{i,j} - \phi_{s,t} \rVert_1$.
		
		{\bf Case 2, (when $P_{i,j} \cap P_{s,t} \not= \emptyset$): }  Let 
		$S = P_{i,j} \cap P_{s,t}$.  As $T$ is a tree, it is easy to see 
		that $S$ is a set of edges
		on a path from $\alpha$ to $\beta$, where  $\alpha, \beta \in V(T)$.
		That is $d_{\alpha,\beta} = |S|$.  In this case, it is easy to see that 
		$\lVert \phi_{i,j} - \phi_{s,t} \rVert_1 = d_{i,j} + d_{s,t} - 
		d_{\alpha,\beta}$.   It is now clear that the minimum element 
		of the set $S_{i,j,s,t}$ is
		$d_{i,j} + d_{s,t} - d_{\alpha,\beta}$ completing the proof.
		%$P_{i,j}
		%$\begin{cases}
			%0 & \mbox{if $r = s$},\\
			%2 & \mbox{if $r \not= s$}
			%\end{cases}$.
			%Further, if $f \in B$ with $f \not \in E$, and $f = \{i,j\}$, then
			%it is clear that $\lVert \phi(f) - \phi(e_s)\rVert_1 = 
			%\begin{cases}
			%d_{i,j} - 1 & \mbox{if $e_s \in P_{i,j}$},\\
			%d_{i,j} + 1 & \mbox{if $e_s \not\in P_{i,j}$},
			%\end{cases}$.
		\end{proof}
		
		We make two remarks from the proof of Theorem 
		\ref{thm:l1_embedding}.
		
		\begin{remark}
			\label{rem:hypercube_embedding}
			In the proof of Theorem  \ref{thm:l1_embedding}, note that the 
			images $\phi_{i,j}$ are vectors in $\{0,1\}^{n-1}$.  Thus, for
			any tree $T$ having $n$ vertices, its $\Min_T$ matrix is isometrically
			embeddable in the $(n-1)$ dimensional hypercube equipped with
			the Hamming metric.  This is easily seen to be stronger than 
			being isometrically embeddable in $\ell_1$ space.
		\end{remark}
		
		\begin{remark}
			\label{rem:triangle-ineq}
			Theorem \ref{thm:l1_embedding}
			shows that the $\Min_T$  matrix satisfies triangle inequality.  
		\end{remark}
		
		%A distance matrix $D = (d_{i,j})_{1 \leq i,j \leq n}$ is said 
		%to be a {\sl hypermetric} if $\sum_{1 \leq i < j \leq n} 
		%x_ix_jd_{i,j} \leq 0$ for all $x \in \ZZ^n$ with 
		%$\sum_{i=1}^n x_i = 1$.  If the above inequality holds for 
		%all $x \in \ZZ^n$ with $\sum_{i=1}^n x_i = 0$, then $D$ is 
		%said to be a {\sl negative type metric.}
		
		The following corollary is easily follows from Theorem \ref{thm:l1_embedding}.

		\begin{corollary}
			\label{cor:inertia_min4pc}
			For any tree $T$, the matrix $\Min_T$ is {\sl hypermetric,} is of {\sl negative
				type} and has exactly one positive eigenvalue.
		\end{corollary}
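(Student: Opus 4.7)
The plan is to deduce all three properties directly from Theorem \ref{thm:l1_embedding}. For hypermetricity and negative type I would invoke the classical fact (see \cite[Chapter 6]{deza-laurent-geometry-cuts-metrics}) that every finite $\ell_1$-embeddable metric is a nonnegative combination of cut semimetrics, and each cut semimetric satisfies the defining inequality \eqref{eqn:ineq-for-metr} for every integer $x$ with $\sum x_i = 1$ or $\sum x_i = 0$. Since these inequalities are preserved under nonnegative linear combinations, $\Min_T$ inherits both properties at once. Equivalently, Remark \ref{rem:hypercube_embedding} exhibits $\Min_T$ as the Hamming metric on points of $\{0,1\}^{n-1}$, which is manifestly a $0/1$-coefficient cut decomposition, so either viewpoint suffices.

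For the eigenvalue count, I would use the negative-type property in its real form: the integer inequality extends to all $x \in \RR^{\binom{n}{2}}$ with $\sum x_i = 0$ by clearing denominators and passing to the limit. This says precisely that the restriction of the quadratic form $x^t \Min_T\, x$ to the hyperplane $H = \bone^\perp$ is negative semidefinite; equivalently, the symmetric matrix $\Min_T|_H$ has all of its $\binom{n}{2}-1$ eigenvalues nonpositive. Cauchy interlacing between $\Min_T$ and its restriction to the codimension-one subspace $H$ then forces at most one eigenvalue of $\Min_T$ to be strictly positive.

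To get at least one positive eigenvalue, I would note that every diagonal entry of $\Min_T$ equals $\min S_{i,j,i,j} = d_{i,i}+d_{j,j} = 0$, so $\Tr(\Min_T) = 0$, while the Bapat--Sivasubramanian result quoted just before Theorem \ref{thm:unimodal-log-conc-min4pc} gives $\rk(\Min_T) = n \geq 2$. Hence $\Min_T$ has at least two nonzero eigenvalues summing to zero, which cannot all be negative, so at least one is strictly positive. Combined with the previous paragraph this gives exactly one positive eigenvalue. I do not expect any real obstacle; the only mildly technical point is the real-valued extension of the integer negative-type inequality, which is entirely routine.
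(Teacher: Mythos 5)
Your proposal is correct and follows essentially the same route as the paper, which simply declares that the corollary ``easily follows'' from Theorem \ref{thm:l1_embedding} together with the background in Deza--Laurent. You spell out the details that the paper leaves as citations: the cut-cone decomposition of $\ell_1$-metrics gives hypermetricity and negative type, and for the eigenvalue count you replace the paper's appeal to ``the theory of isometrically $\ell_1$-embeddable finite metric spaces'' with a clean self-contained argument---negative type restricted to $\bone^\perp$ plus Cauchy interlacing bounds the number of positive eigenvalues above by one, while $\Tr(\Min_T)=0$ and $\rk(\Min_T)=n\geq 2$ (from Bapat--Sivasubramanian) force at least one. That last step is a small but genuine improvement in explicitness over the paper's treatment, and all the claims check out: the diagonal entries are indeed $\min S_{i,j,i,j}=\min\{2d_{i,j},0,2d_{i,j}\}=0$, and the passage from the integer negative-type inequality to its real version is standard.
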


	\section{The 2-Steiner distance matrix $\DD_2(T)$ of a tree $T$}
	\label{sec:steiner}
Recall that for a tree $T$ having $n$ vertices and $p$ pendant vertices,  
Azimi and Sivasubramanian in \cite[Theorem 1]{aliazimi-siva-steiner-2-dist} 
showed that its 2-Steiner distance matrix $\DD_2(T)$ has rank 
$r = 2n-p-1$.  They also gave the following basis.

\begin{remark}\cite[Remark 6]{aliazimi-siva-steiner-2-dist}
\label{remark: basis-2-steiner}
For a tree $T$ of order $n$ with $p$ leaves, let $B_1,B_2,\hdots,B_{n-p}$ be the blocks 
of its line graph $\LG(T)$ such that $|B_i|=b_i$ for $ i=1,\ldots,n-p$. If $e_i\in V(\LG(T))$, 
$i=1,\ldots,n-1$ and $f_j$, $j=1,\ldots,n-p$, is the symmetric difference 
of endpoints of edge  $f_j^\prime \in B_j$ in $\LG(T)$, then 
$B=\{e_1,e_2,\hdots,e_{n-1},f_1,\hdots,f_{n-p}\}$ forms a 
basis for the row space of $\DD_2(T)$.
\end{remark}
	
Below, we provide the proof of the first part of Theorem \ref{thm:unimodal-log-conc-2steiner-dist}, followed by Corollary \ref{cor:dist-matrix-unim}, which shows that the sequence $|a_0|, |a_1|, \hdots, |a_{r-1}|$ is unimodal and log-concave.
	
\begin{proof} (Of Theorem \ref{thm:unimodal-log-conc-2steiner-dist} :) 
Let $T$ be a tree of order $n$ with $p$ pendant vertices and $r = 2n-p-1$. 
Azimi and Sivasubramanian in \cite[Theorem 18]{aliazimi-siva-steiner-2-dist}
showed that the matrix $\DD_2(T)[B,B]$ has $r-1$ negative eigenvalues and 
one positive eigenvalue. Hence, by Corollary \ref{cor:dist-matrix-unim}, 
it follows that the sequence $|a_0|, |a_1|, \hdots,|a_{r-1}|$ is unimodal and log-concave,
		completing the first part.
	\end{proof}
	
	For the second part of Theorem \ref{thm:unimodal-log-conc-2steiner-dist}, we
give our bounds on the peak location of the coefficients of 
$\charpoly_{\DD_2(T)[B,B]}(x)$.  As we consider  
	three families of trees, the star $S_n$, the bi-star $S_{1,n-3}$
	and the path $P_n$ on $n$ vertices, we trifurcate our proof into
	three subsections.

	 %%%%%%%%%%%%%%%%%%%%%%%%%%%%%%%%%%%%%%%%%%%%%%%%%%%%%%%%%%%%%%%%%%%%%%%%%%%%%%
\subsection{Peak location for star trees}
\label{subsec:peak-star}
For a star $S_n$ on $n$ vertices, $B=E\cup \{f\}$ is a basis of $\DD_2(S_n)$, where $E$ is the edge set of $S_n$ and $f=\{i,j\}\notin E$ for any two vertices $i,j$ of $S_n$. In the following theorem, we find the spectra of $\DD_2(S_n)[B,B]$.
	  
\begin{theorem}
\label{spectra-steiner-star}
For a star $S_n$ on $n$ vertices,
the eigenvalues of 
$\DD_2(S_n)[B,B]$ are $-1$ with multiplicity $n-3$
and the roots of the cubic polynomial $g(x)=x^3-2(n-1)x^2-7(n-2)x-(n-1).$
\end{theorem}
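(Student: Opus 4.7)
The plan is to mirror the proof of Theorem~\ref{spectra-min4pc}: I would exhibit $n-3$ linearly independent eigenvectors for the eigenvalue $-1$ directly, and then extract the three remaining eigenvalues as the roots of the characteristic polynomial of a small quotient matrix arising from an equitable partition of $\DD_2(S_n)[B,B]$.

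First I would fix notation and write the matrix explicitly. Let $1$ be the central vertex of $S_n$ and $2, \ldots, n$ be its leaves, so that $E = \{e_2, \ldots, e_n\}$ with $e_k = \{1, k\}$. After relabelling leaves we may take $f = \{2,3\}$. A direct computation of the 2-Steiner distance on a star shows that the $E \times E$ block of $\DD_2(S_n)[B,B]$ equals $2J_{n-1} - I_{n-1}$ (diagonal $1$, off-diagonal $2$), the $(f,f)$ entry equals $2$, and the column indexed by $f$ and restricted to $E$ is the vector $v \in \RR^{n-1}$ with $v_{e_2} = v_{e_3} = 2$ and $v_{e_k} = 3$ for $4 \le k \le n$. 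Hence
\[
\DD_2(S_n)[B,B] \;=\; \begin{pmatrix} 2J_{n-1} - I_{n-1} & v \\ v^t & 2 \end{pmatrix}.
\]

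Next I would produce the $-1$ eigenspace. For $x \in \RR^{n-1}$, the block vector $(x,0)^t$ is a $-1$ eigenvector precisely when $(2J_{n-1} - I_{n-1})x = -x$ and $v^t x = 0$, which reduce to the two linear conditions $\bone^t x = 0$ and $x_{e_2} + x_{e_3} = 0$. Their joint solution space is $(n-3)$-dimensional, with an explicit independent set given, in the notation of the proof of Theorem~\ref{spectra-min4pc}, by $\{e(1,2)\} \cup \{e(k,k+1) : 3 \le k \le n-2\}$. This yields $1 + (n-4) = n-3$ eigenvectors for $-1$.

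For the remaining three eigenvalues I would use the equitable partition $\Pi = \{E_1, E_2, \{f\}\}$ with $E_1 = \{e_2, e_3\}$ and $E_2 = \{e_4, \ldots, e_n\}$. A short verification shows that each block has constant row sum and yields the quotient matrix
\[
Q_{\Pi} \;=\; \begin{pmatrix} 3 & 2(n-3) & 2 \\ 4 & 2n-7 & 3 \\ 4 & 3(n-3) & 2 \end{pmatrix}.
\]
Expanding $\det(xI - Q_{\Pi})$ then gives exactly $g(x) = x^3 - 2(n-1)x^2 - 7(n-2)x - (n-1)$. By Lemma~\ref{lem:quo-spec} the three roots of $g$ are eigenvalues of $\DD_2(S_n)[B,B]$; since $g(-1) = 4n - 12 \ne 0$ for $n \ge 4$, they are distinct from $-1$, so together with the $n-3$ copies of $-1$ they account for all $n$ eigenvalues. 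The main obstacle is purely computational: correctly reading off the 2-Steiner distances on the star to populate the matrix, verifying equitability of $\Pi$, and expanding the $3 \times 3$ determinant so that the trace, the sum of principal $2 \times 2$ minors, and the determinant line up to produce the precise cubic $g(x)$ in the statement.
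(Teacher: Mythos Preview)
Your proposal is correct and follows essentially the same approach as the paper: exhibit $n-3$ independent $(-1)$-eigenvectors supported on the edge coordinates, then use the equitable partition $\{E_1,E_2,\{f\}\}$ with $E_1$ the two edges meeting $f$ to obtain the same $3\times3$ quotient matrix and the cubic $g(x)$, and finish by checking $g(-1)\neq 0$. The only differences are cosmetic (your edge labels $e_2,\ldots,e_n$ versus the paper's $e_1,\ldots,e_{n-1}$, and your block presentation of the matrix).
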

	 \begin{proof}
Let $S_n$ have vertex set $V=\{1,2,\hdots,n\}$.  Let $E(T)=\{e_i = 
\{1,i+1\}: 1 \leq i < n\}$ be its edge set. Without loss of generality, assume that 
$f=\{2,3\}\notin E(T)$ and $B=\{e_1,e_2,\hdots,e_{n-1},f\}$.  
		 Clearly, 
		 \[ 
\DD_2(S_n)[B,B] =
\begin{blockarray}{cccccc}
			 & e_1 & e_2 & \hdots & e_{n-1} & f\\
			 \begin{block}{c(ccccc)}
				 e_1 & 1 & 2  &  \hdots & 2 & 2 \\
				 e_2 & 2 & 1  &  \hdots & 2 & 2 \\
%		 			e_3 & 2 & 2  & \hdots & 2 & 3 \\
				 \vdots & \vdots & \vdots  &\vdots &\ddots &\vdots & \vdots \\
				 e_{n-1} & 2 & 2  & \hdots & 1 & 3 \\
				 f & 2 & 2  & \hdots & 3 & 2\\
			 \end{block}
		 \end{blockarray} .\]
		 
Let $e(i,j)$ be the $n$-dimensional column vector with its $i$-th component 
$1$, its $j$-th component $-1$ and all other components being $0$. If 
$X=\{e(1,2)\}\cup \{e(j,j+1):3\leq j\leq n-2\}$, then for 
any $\textbf{x}\in X$, 
we clearly have $\big(\DD_2(S_n)[B,B]\big)\textbf{x}=-\textbf{x}$. 
Note that $|X|=n-3$, 
and all vectors in $S$ are linearly independent. Therefore, $-1$ is an 
eigenvalue of $\DD_2(S_n)[B,B]$ with multiplicity at least $n-3$.
		 
Let $E_1=\{e_1,e_2\}$, $E_2=\{e_3,\hdots,e_{n-1}\}$ and
$\Pi_2: E_1 \cup E_2 \cup \{f\}$.  It is easy to see that 
$\Pi_2$ an equitable 
partition of $\DD_2(S_n)[B,B]$ and gives rise to the quotient matrix 
\[Q_{\Pi_2}=
\left ( {\begin{array}{ccc}
3 & 2(n-3) & 2\\
4 & 2(n-4)+1 & 3 \\
4 & 3(n-3) & 2 \\
\end{array} } \right).
\mbox{ A simple computation gives }
\]
		 \begin{equation}
			 \label{eqn:charpoly_equi_partn}
			 \charpoly_{Q_{\Pi_2}}(x)  =  g(x)=x^3-2(n-1)x^2-7(n-2)x-(n-1).
		 \end{equation}
By Lemma \ref{lem:quo-spec}, all eigenvalues of $Q_{\Pi_2}$ are eigenvalues 
of $\DD_2(S_n)[B, B]$. Since $g(-1)\neq 0$, the eigenvalues of 
$\DD_2(S_n)[B, B]$ are $-1$ with multiplicity $n-3$, and the roots of $g(x)=0$,
completing the proof.
\end{proof}

\noindent
In our next result, we determine the peak location of the coefficients 
of $\charpoly_{\DD_2(S_n)[B,B]}(x)$ up to an interval of constant 
size that is independent of $n$.  
	 
\begin{theorem}
\label{peak-steiner-star}
Let $B$ be the basis of $\DD_2(S_n)$ used
in Theorem \ref{spectra-steiner-star}. If $a_0,a_1,\hdots,a_n$ are the 
coefficients of the characteristic polynomial of $\DD_2(S_n)[B,B]$ and 
$|a_t|=\max |a_k|$, then $\lfloor \frac{n-2}{2} \rfloor \leq t 
\leq \lceil \frac{n}{2}\rceil$.
\end{theorem}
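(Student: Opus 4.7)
The plan is to follow the same computational strategy used in the proof of Theorem \ref{thm:unimodal-log-conc-min4pc}. By Theorem \ref{spectra-steiner-star}, we have the factorization
$$\charpoly_{\DD_2(S_n)[B,B]}(x) = (x+1)^{n-3}\bigl(x^3 + b_1 x^2 + c_1 x + d_1\bigr),$$
where $b_1 = -2(n-1)$, $c_1 = -7(n-2)$ and $d_1 = -(n-1)$. Expanding $(x+1)^{n-3}$ by the binomial theorem and collecting the coefficient of $x^k$ in the product, I would write
$$a_k = \binom{n-3}{k-3} + b_1\binom{n-3}{k-2} + c_1\binom{n-3}{k-1} + d_1\binom{n-3}{k},$$
with the convention $\binom{m}{j}=0$ for $j<0$ or $j>m$. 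Note that the diagonal entries of $\DD_2(S_n)[B,B]$ are all positive, so $\Tr\bigl(\DD_2(S_n)[B,B]\bigr)>0$; combined with $\Inertia = (1,n-1,0)$, part (2) of Corollary \ref{cor:dist-matrix-unim} applies and $a_k<0$ for every $0\leq k\leq n-1$. Hence $|a_k| = -a_k$ and the whole sequence $|a_0|,\dots,|a_{n-1}|$ is already known to be unimodal, so locating the peak reduces to finding the unique switch of sign of the difference $|a_k|-|a_{k-1}| = a_{k-1}-a_k$.

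Next, I would dispose of the small-$k$ boundary cases $k\in\{0,1,2\}$ and the large-$k$ boundary cases $k\in\{n-2,n-1\}$ by direct substitution into the formula above, checking that $|a_0|\leq |a_1|\leq |a_2|$ and that $|a_{n-2}|\leq |a_{n-3}|$ for all $n$ large enough (and handling tiny $n$ by inspection). For the generic range $3\leq k\leq n-3$, I would factor out $\binom{n-3}{k-3}$ to obtain $a_k = \binom{n-3}{k-3}\, f_2(n,k)$, where
$$f_2(n,k) = 1 + \frac{b_1(n-k)}{k-2} + \frac{c_1(n-k)(n-k-1)}{(k-2)(k-1)} + \frac{d_1(n-k)(n-k-1)(n-k-2)}{(k-2)(k-1)k}.$$
This expresses $|a_k| = \binom{n-3}{k-3}|f_2(n,k)|$ in a form directly comparable to $|a_{k-1}|$.

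Finally, I would compute the difference $|a_k|-|a_{k-1}|$, clear the common denominator $(k-3)(k-2)(k-1)k$, and simplify the numerator. The numerator is a polynomial in $n$ and $k$ whose dominant part, after substituting the explicit $b_1,c_1,d_1$, should factor so that the sign is controlled by a single linear expression of the shape $\alpha(n-2k) + \beta$ with $\alpha>0$ (plus lower-order corrections that do not shift the sign change by more than a constant). This would yield $|a_k|\geq |a_{k-1}|$ whenever $k\leq \tfrac{n-2}{2}$ and $|a_k|\leq |a_{k-1}|$ whenever $k\geq \tfrac{n}{2}$, forcing the peak index $t$ to lie in $\bigl[\lfloor (n-2)/2\rfloor,\lceil n/2\rceil\bigr]$. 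The main obstacle is the algebraic simplification of $a_{k-1}-a_k$: four binomial coefficients become eight after shifting, and one must keep careful track of signs and cancellations to isolate the linear factor $n-2k+O(1)$ that governs the crossover. The boundary cases, where the formula for $f_2(n,k)$ is not valid, also require separate verification so that the claimed range for $t$ is not invalidated by an overlooked endpoint.
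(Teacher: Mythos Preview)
Your proposal is correct and follows essentially the same route as the paper: factor $\charpoly_{\DD_2(S_n)[B,B]}(x)=(x+1)^{n-3}g(x)$ using Theorem \ref{spectra-steiner-star}, expand via the binomial theorem, handle the small and large $k$ boundary cases by direct evaluation, and for the generic range compare $|a_k|$ with $|a_{k-1}|$ to locate the crossover near $k=n/2$. The paper carries out exactly this computation (finding the decreasing threshold at $k\geq (n+2)/2$ rather than your tentative $n/2$, which yields the same interval $[\lfloor (n-2)/2\rfloor,\lceil n/2\rceil]$), so your plan matches it step for step.
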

\begin{proof}
By Theorem \ref{spectra-steiner-star} and 
\eqref{eqn:charpoly_equi_partn},  we have 
%if $f(x) = \charpoly_{\DD_2(S_n)[B,B]}(x)$, then 
%	\begin{align*}
$\charpoly_{\DD_2(S_n)[B,B]}(x)=(x+1)^{n-3}\Big(x^3-2(n-1)x^2-7(n-2)x-(n-1)\Big).$
			 %&=-\bigg[(n-1)+(n^2+3n-11)x+\frac{1}{2}(n^3+6n^2-47n+68)x^2 \\
			 %&~~~~~~~~~+\sum_{k=3}^{n-3} \bigg[-\binom{n-3}{k-3}+2(n-1)\binom{n-3}{k-2}+14(n-2)\binom{n-3}{k-1}+(n-1)\binom{n-3}{k}\bigg]x^k\\
			 %&~~~~~~~~~+(3n^2+5n-28)x^{n-2}+(n+1)x^{n-1}\bigg]+x^n.
%		 	\end{align*}
If $a_k$ is the coefficient of $x^k$ in $f(x)$, then it is easy to see that
\begin{align*}
a_0 &=-(n-1),~~ a_1 =-(n^2+3n-11),~~a_2=-\frac{1}{2}(n^3+6n^2-47n+68),\\
a_k &= -\bigg[-\binom{n-3}{k-3}+2(n-1)\binom{n-3}{k-2}+14(n-2)\binom{n-3}{k-1}+(n-1)\binom{n-3}{k}\bigg]\\
&~~~~~\text{when}~ 3\leq k\leq n-3,\\
a_{n-2}&=-(3n^2+5n-28),~~ a_{n-1}=-(n+1), ~~\text{and} ~~ a_n=1.
\end{align*}
It is easy to check that $|a_0|\leq |a_1|\leq |a_2|$ and $|a_{n-2}|\geq 
|a_{n-1}|\geq |a_{n}|$ when $n\geq 6$.  When $4\leq k\leq n-3$, one can check that
\begin{align*}
			 &|a_k|- |a_{k-1}|\\
			 &= \binom{n-3}{k-4} \bigg[\frac{(2n^3+4n^2-6kn^2+4k^2n-3kn-2k^2+4)}{(k-2)(k-3)}+\bigg(\frac{7(n-2)(n-k)(n-k+1)}{(k-1)(k-2)}\bigg)\cdot\\
			 &~~~~~~~~~~~~~~~~~~~~~~\bigg(\frac{n-2k+2}{k-3}\bigg)+\bigg(\frac{(n-1)(n-k-1)(n-k)(n-k+1)}{(k-1)(k-2)(k-3)}\bigg)\cdot \bigg(\frac{n-2k-2}{k}\bigg)\bigg].
\end{align*}
Hence, when $4\leq k\leq n-3$, it is easy to verify that
$|a_k|\geq |a_{k-1}|$ if and only if $k\leq \frac{n-2}{2}$ and 
$|a_k|\leq |a_{k-1}|$ if and only if $k\geq \frac{n+2}{2}$. Thus, 
we have  $|a_0|\leq |a_1|\leq |a_2| \leq \hdots \leq 
|a_{\lfloor \frac{n-2}{2} \rfloor}|$ and 
$|a_{\lceil \frac{n+2}{2}-1\rceil}| \geq |a_{\lceil \frac{n+2}{2}\rceil}| 
\geq \hdots \geq |a_{n-3}|\geq |a_{n-2}|$.  
Hence, if $|a_t|=\max_{0\leq k\leq n-2} |a_k|$, then $\lfloor \frac{n-2}{2} 
\rfloor \leq t \leq \lceil \frac{n}{2}\rceil$, completing our proof.
\end{proof}

\subsection{Peak location for the bi-star $S_{1,n-3}$}
\label{subsec-2-steiner-bistar}
Let $S_{1,n-3}$ be a tree on $n$ vertices obtained from $P_2$ that has
the edge $\{v_1,v_2\}$ by 
attaching a pendant vertex $v_0$ to $v_1$ and $(n-3)$ pendant vertices 
$v_3,v_4,\hdots,v_{n-1}$ to $v_2$. Let $e_1=\{v_0,v_1\}, 
e_2=\{v_1,v_2\}$ and $e_i=\{v_2,v_i\}$ for $3\leq i \leq n-1$. 
Since $S_{1,n-3}$ has $n-2$ pendant vertices, two 
types of basis $B_1$ and $B_2$  are output by  the algorithm given by
Azimi and Sivasubramanian (see Remark \ref{remark: basis-2-steiner}).  
These are 
\begin{align*}
B_1&=\{e_1,e_2,\hdots,e_{n-1},f_1,f_2\}~ \text{where}~ f_1=\{v_0,v_2\},f_2=\{v_1,v_3\}~\text{and}\\
		 B_2&=\{e_1,e_2,\hdots,e_{n-1},f_1,f_2\}~ \text{where}~ f_1=\{v_0,v_2\},f_2=\{v_3,v_4\}.
	 \end{align*}
	 
We find the eigenvalues of both $\DD_2(S_{1,n-3})[B_1,B_1]$ and 
$\DD_2(S_{1,n-3})[B_2,B_2]$.
	 
\begin{theorem}
\label{spectra-steiner-bistar}
Let $B_1$ and $B_2$ be the 
bases of $S_{1,n-3}$ as mentioned above. Then,
\begin{enumerate}
\item the eigenvalues of $\DD_2(S_{1,n-3})[B_1,B_1]$ are $-1$ with multiplicity 
$n-5$ and the roots of the polynomial 
			 $h_1(x)=x^6-2(n-1)x^5-3(7n-12)x^4-18(3n-7)x^3-5(9n-22)x^2-(13n-28)x-(n-1)$.
			 \item the eigenvalues of $\DD_2(S_{1,n-3})[B_2,B_2]$ are $-1$ with multiplicity 
			 $n-5$ and the roots of the polynomial 
			 $h_2(x)=x^6-2(n-1)x^5-(21n-22)x^4-(62n-141)x^3-(53n-133)x^2-(15n-34)x-(n-1)$.
		 \end{enumerate}
	 \end{theorem}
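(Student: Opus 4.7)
The plan is to mirror the proof of Theorem \ref{spectra-steiner-star}: for each basis $B_i$, we first identify enough explicit eigenvectors for the eigenvalue $-1$ (exploiting the symmetry of the pendant edges attached to $v_2$), then locate an equitable partition whose quotient matrix is $6 \times 6$ and whose characteristic polynomial is exactly $h_i(x)$, and finally invoke Lemma \ref{lem:quo-spec} together with the observation $h_i(-1) \neq 0$ to conclude that the remaining six eigenvalues are the roots of $h_i(x)$. Note that $\DD_2(S_{1,n-3})[B_i,B_i]$ is an $(n+1) \times (n+1)$ matrix, since $\rk(\DD_2(S_{1,n-3})) = 2n-p-1 = n+1$ when $p = n-2$, so the eigenvalue count $(n-5) + 6 = n+1$ is consistent.

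For $B_1 = \{e_1, e_2, \ldots, e_{n-1}, f_1, f_2\}$ with $f_1 = \{v_0,v_2\}$ and $f_2 = \{v_1, v_3\}$, the vertices $v_3$ and $v_4, \ldots, v_{n-1}$ play different roles since $f_2$ singles out $v_3$; accordingly I would consider the partition
\[
\Pi_1 : \{e_1\} \cup \{e_2\} \cup \{e_3\} \cup \{e_4, \ldots, e_{n-1}\} \cup \{f_1\} \cup \{f_2\}.
\]
A direct check of $2$-Steiner distances shows each block has constant row sums, so $\Pi_1$ is equitable. Using the column vectors $e(i,j)$ (with $1$ in position $i$, $-1$ in position $j$ and $0$ elsewhere) for pairs $e_i, e_j$ drawn from $\{e_4,\ldots,e_{n-1}\}$, I get $n-5$ linearly independent eigenvectors for $-1$, matching the stated multiplicity. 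Computing the $6\times 6$ quotient matrix $Q_{\Pi_1}$ using the constant row sums, one expands $\det(xI - Q_{\Pi_1})$ and, after simplification, should recover $h_1(x)$; finally one verifies $h_1(-1) \neq 0$ (a single polynomial evaluation) so that the $n-5$ copies of $-1$ together with the six roots of $h_1$ account for all $n+1$ eigenvalues.

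For $B_2 = \{e_1, e_2, \ldots, e_{n-1}, f_1, f_2\}$ with $f_1 = \{v_0,v_2\}$ and $f_2 = \{v_3, v_4\}$, the distinguished pendants are now $v_3$ and $v_4$. Checking all $2$-Steiner distances to the other rows confirms $e_3$ and $e_4$ are interchangeable, so the relevant equitable partition is
\[
\Pi_2 : \{e_1\} \cup \{e_2\} \cup \{e_3, e_4\} \cup \{e_5, \ldots, e_{n-1}\} \cup \{f_1\} \cup \{f_2\}.
\]
This yields one eigenvector $e(3,4)$ and $n-6$ eigenvectors from differences within $\{e_5,\ldots,e_{n-1}\}$, giving $n-5$ linearly independent eigenvectors for $-1$. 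The remaining task is to compute $Q_{\Pi_2}$ and verify that its characteristic polynomial equals $h_2(x)$ and that $h_2(-1) \neq 0$.

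The main obstacle is purely computational: obtaining the two $6 \times 6$ quotient matrices correctly (the entries are small integers depending on $n$), and then expanding their degree-six characteristic polynomials and simplifying to match the specific $h_1$ and $h_2$ stated. Since the entries are simple (tree distances and Steiner distances of at most four vertices in a bi-star), there is no conceptual subtlety; the check $h_i(-1) \neq 0$ at the end is an arithmetic verification on two fixed polynomials in $n$ and ensures the $-1$-eigenspace is exactly captured by the symmetry eigenvectors we wrote down.
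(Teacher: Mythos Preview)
Your proposal is correct and follows essentially the same approach as the paper: the paper uses exactly the equitable partitions you describe (it calls them $\Pi_3$ and $\Pi_4$), exhibits the same difference vectors $e(i,j)$ as eigenvectors for $-1$, computes the $6\times 6$ quotient matrices explicitly, and concludes via Lemma \ref{lem:quo-spec} together with $h_i(-1)\neq 0$. Your accounting of dimensions and multiplicities is in fact cleaner than the paper's, which contains a minor typo claiming $|X|=|Y|=n-3$ rather than $n-5$.
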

	 
\begin{proof}
With the given labelling, we have 
		 
\begin{align*}
\DD_2(S_{1,n-3})[B_1,B_1]&=\begin{blockarray}{cccccccc}
%& e_1 & e_2 & e_3 & e_4 & \hdots & e_{n-1} & f_1 & f_2\\
& e_1 & e_2 & e_3 &  \hdots & e_{n-1} & f_1 & f_2\\
\begin{block}{c(ccccccc)}
e_1 & 1 & 2 & 3 &  \hdots & 3 & 2 & 3 \\
e_2 & 2 & 1 & 2 &  \hdots & 2 & 2 & 2 \\
e_3 & 3 & 2 & 1 &  \hdots & 2 & 3 & 2 \\
%e_4 & 3 & 2 & 2 &  \hdots & 2 & 3 & 3 \\
\vdots & \vdots & \vdots &\vdots &\ddots &\vdots &\vdots & \vdots \\
e_{n-1} & 3 & 3 & 2  &  \hdots & 1 & 3 & 3 \\
f_1 & 2 & 2 & 3 &  \hdots & 3 & 2 & 3 \\
f_2 & 3 & 2 & 2 &  \hdots & 3 & 3 & 2 \\
\end{block}
\end{blockarray}\quad \text{and}\\
\DD_2(S_{1,n-3})[B_2,B_2]&=\begin{blockarray}{ccccccccc}
%& e_1 & e_2 & e_3 & e_4 & e_5 & \hdots & e_{n-1} & f_1 & f_2\\
& e_1 & e_2 & e_3 & e_4 &  \hdots & e_{n-1} & f_1 & f_2\\
\begin{block}{c(cccccccc)}
e_1 & 1 & 2 & 3 & 3 &  \hdots & 3 & 2 & 4 \\
e_2 & 2 & 1 & 2 & 2 &  \hdots & 2 & 2 & 3 \\
e_3 & 3 & 2 & 1 & 2 &  \hdots & 2 & 3 & 2 \\
e_4 & 3 & 2 & 2 & 1 &  \hdots & 2 & 3 & 2 \\
%e_5 & 3 & 2 & 2 & 2 &  \hdots & 2 & 3 & 3 \\
\vdots & \vdots & \vdots &\vdots &\vdots &\ddots &\vdots &\vdots & \vdots \\
e_{n-1} & 3 & 2 & 2  & 2 &  \hdots & 1 & 3 & 3 \\
f_1 & 2 & 2 & 3 & 3 &  \hdots & 3 & 2 & 4 \\
f_2 & 4 & 3 & 2 & 2 &  \hdots & 3 & 4 & 2 \\
\end{block}
\end{blockarray}~.
		 \end{align*}
		 
As before, let $e(i,j)$ be the $n$-dimensional column vector with its 
$i$-th component $1$, its $j$-th component $-1$ and all other 
components being $0$.  If $X= \{e(j,j+1):4\leq j\leq n-2\}$ and  
$Y=\{e(3,4)\}\cup \{e(j,j+1):5\leq j\leq n-2\}$, then for any $\textbf{x}\in X$ 
and $\textbf{y}\in Y$, we have $\big(\DD_2(S_{1,n-3})[B_1,B_1]\big)\textbf{x}
=-\textbf{x}$ and $\big(\DD_2(S_{1,n-3})[B_2,B_2]\big)\textbf{y}=-\textbf{y}$. 
Note that $|X|=|Y|=n-3$, and that all vectors in both $X$ and $Y$ 
are linearly independent.  
%Similarly, it is easy to see that all vectors in $Y$ are also 
%linearly independent.
Therefore, $-1$ is an eigenvalue of both $\DD_2(S_{1,n-3})[B_1,B_1]$ and 
$\DD_2(S_{1,n-3})[B_2,B_2]$ with multiplicity at least $n-3$.

If $E_1=\{e_4,\hdots,e_{n-1}\}$, then it is easy to see that 
$\Pi_3:\{e_1\}\cup \{e_2\}\cup \{e_3\}\cup E_1 \cup \{f_1\}\cup \{f_2\}$ is an 
equitable partition of $\DD_2(S_{1,n-3})[B_1,B_1]$ with the quotient matrix given below.
%\[
%\]

A simple computation gives the characteristic polynomial of $Q_{\Pi_3}$ to be
$h_1(x)=x^6-2(n-1)x^5-3(7n-12)x^4-18(3n-7)x^3-5(9n-22)x^2-(13n-28)x-(n-1).$
By Lemma \ref{lem:quo-spec}, the eigenvalues of $Q_{\Pi_3}$ are eigenvalues 
of $\DD_2(S_{1,n-3})[B_1,B_1]$. Since $h_1(-1)\neq 0$, the eigenvalues of 
$\DD_2(S_{1,n-3})[B_1,B_1]$ are $-1$ with multiplicity $n-5$, and the roots of $h_1(x)=0$.
		 
If $E_2=\{e_3,e_4\}$ and $E_3=\{e_5,\hdots,e_{n-1}\}$, then it is easy to see that 
$\Pi_4:\{e_1\}\cup \{e_2\}\cup E_2 \cup E_3 \cup \{f_1\}\cup \{f_2\}$ is an 
equitable partition of $\DD_2(S_{1,n-3})[B_2,B_2]$ with the quotient matrix 
given below.
\[
Q_{\Pi_3}=
\left ( {\begin{array}{cccccc}
1 & 2 & 3 & 3(n-4) & 2 & 3\\
2 & 1 & 2 & 2(n-4) & 2 & 2\\
3 & 2 & 1 & 2(n-4) & 3 & 2\\
3 & 2 & 2 & 2(n-4)-1 & 3 & 3\\
2 & 2 & 3 & 3(n-4) & 2 & 3\\
3 & 2 & 2 & 3(n-4) & 3 & 2\\
\end{array} } \right)
\mbox{ and }
Q_{\Pi_4}=
		 \left ( {\begin{array}{cccccc}
				 1 & 2 & 6 & 3(n-5) & 2 & 4\\
				 2 & 1 & 4 & 2(n-5) & 2 & 3\\
				 3 & 2 & 3 & 2(n-5) & 3 & 2\\
				 3 & 2 & 4 & 2(n-5)-1 & 3 & 3\\
				 2 & 2 & 6 & 3(n-5) & 2 & 4\\
				 4 & 3 & 4 & 3(n-5) & 4 & 2\\
		 \end{array} } \right)\]
The characteristic polynomial of $Q_{\Pi_4}$ clearly equals 
$h_2(x)=x^6-2(n-1)x^5-(21n-22)x^4-(62n-141)x^3-(53n-133)x^2-
(15n-34)x-(n-1).$  By Lemma \ref{lem:quo-spec}, all eigenvalues of 
$Q_{\Pi_4}$ are eigenvalues of $\DD_2(S_{1,n-3})[B_2,B_2]$. Since 
$h_2(-1)\neq 0$, the eigenvalues of $\DD_2(S_{1,n-3})[B_2,B_2]$ are 
$-1$ with multiplicity $n-5$, and the roots of $h_2(x)=0$.
\end{proof}
	 
%		 We next determine the peak location of coefficients of the
%		 characteristic polynomial for $\DD_2(S_{1,n-3})[B_1,B_1]$ and 
%$\DD_2(S_{1,n-3})[B_2,B_2]$.
	 
For both $\DD_2(S_{1,n-3})[B_1,B_1]$ and 
$\DD_2(S_{1,n-3})[B_2,B_2]$, we determine the peak location of 
coefficients of their characteristic polynomial in the next result. 
	 
\begin{theorem}
\label{peak-location-steiner-bistar}
Let $S_{1,n-3}$ be the tree on $n$ vertices as mentioned above.
\begin{enumerate}
\item Let $a_0,a_1,\hdots,a_n, a_{n+1}$ be the coefficients of 
$\charpoly_{\DD_2(S_{1,n-3})[B_1,B_1]}(x)$  and $|a_t|=\max |a_k|$. 
Then $\lfloor \frac{n-4}{2} \rfloor \leq t \leq \lceil \frac{n+4}{2}\rceil$.

\item Let $b_0,b_1,\hdots,b_n,b_{n+1}$ be the coefficients of %the 
$\charpoly_{\DD_2(S_{1,n-3})[B_2,B_2]}(x)$  and 
$|b_t|=\max |b_k|$. Then $\lfloor \frac{n-4}{2} \rfloor \leq t 
\leq \lceil \frac{n+4}{2}\rceil$.
\end{enumerate}
	 \end{theorem}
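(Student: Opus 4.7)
The plan is to imitate the argument used for the star tree in Theorem \ref{peak-steiner-star}. By Theorem \ref{spectra-steiner-bistar}, we have the factorizations
$$\charpoly_{\DD_2(S_{1,n-3})[B_i,B_i]}(x) \;=\; (x+1)^{n-5}\,h_i(x), \quad i=1,2,$$
where $h_i(x)=\sum_{j=0}^{6}h_{i,j}x^j$ is a degree-$6$ polynomial whose seven coefficients are explicit linear functions of $n$ (read off from Theorem \ref{spectra-steiner-bistar}). Convolving with the binomial expansion of $(x+1)^{n-5}$ immediately gives
$$a_k = \sum_{j=0}^{6} h_{1,j}\binom{n-5}{k-j}, \qquad b_k = \sum_{j=0}^{6} h_{2,j}\binom{n-5}{k-j},$$
with the convention $\binom{n-5}{m}=0$ for $m<0$ or $m>n-5$. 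By the first part of Theorem \ref{thm:unimodal-log-conc-2steiner-dist} (which uses Corollary \ref{cor:dist-matrix-unim}), for $0\le k\le n$ each $a_k$ and $b_k$ is negative, so working with $|a_k|=-a_k$ and $|b_k|=-b_k$ is legitimate across the full range of interest.

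Next I would verify the two boundary intervals by hand. For $k\in\{0,1,\dots,5\}$, only a bounded number of binomials contribute, so each $|a_k|$ (and each $|b_k|$) is a polynomial in $n$ whose leading term is easy to compare with that of $|a_{k-1}|$; a routine check shows $|a_0|\le|a_1|\le\cdots\le|a_5|$ and $|b_0|\le|b_1|\le\cdots\le|b_5|$ for $n$ at least some small constant, with the remaining small-$n$ cases checked directly. The same inspection at the top end $k\in\{n-4,n-3,\dots,n+1\}$ yields $|a_{n-4}|\ge\cdots\ge|a_{n+1}|$ and similarly for $b$.

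For the bulk range $6\le k\le n-5$, I would compute the telescoped differences in the exact style of Theorems \ref{thm:unimodal-log-conc-min4pc} and \ref{peak-steiner-star}. Factoring out the smallest binomial present gives
$$|a_k|-|a_{k-1}| \;=\; \binom{n-5}{k-6}\,C_1(n,k),\qquad |b_k|-|b_{k-1}| \;=\; \binom{n-5}{k-6}\,C_2(n,k),$$
where $C_i(n,k)$ is a rational function. Using the identity $\binom{n-5}{k-j}-\binom{n-5}{k-j-1}=\binom{n-5}{k-j}\frac{n-2k+2j-4}{n-k-j+4}$ for each $j=0,\dots,6$, one rewrites $C_i(n,k)$ as a sum of seven terms each proportional to a factor linear in $n-2k+c$ for a small constant $c$. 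The contribution from the dominant-in-$n$ piece (the $-(n-1)$ constant term of $h_i$) is proportional to $(n-1)(n-k-1)(n-k)(n-k+1)(n-2k-4)$, which determines the sign of $C_i(n,k)$ on a transition window of width $O(1)$ around $k=(n-4)/2$. A careful accounting shows that $C_i(n,k)\ge 0$ whenever $k\le (n-4)/2$ and $C_i(n,k)\le 0$ whenever $k\ge (n+4)/2$, with the middle gap absorbed into the stated bounds. Together with the boundary inspection, this gives $\lfloor (n-4)/2\rfloor\le t\le \lceil (n+4)/2\rceil$ in both parts.

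The main obstacle will be the sign analysis of the bulk difference. Because $h_i(x)$ carries seven nonzero coefficients, $C_i(n,k)$ is a sum of seven binomial ratios, and the lower-order-in-$n$ contributions (arising from the $x, x^2, \dots, x^5$ coefficients of $h_i$) have to be shown not to reverse the sign dictated by the dominant constant-term contribution except within a window of size at most $4$ around $n/2$. This is bookkeeping rather than anything conceptually new, and it is precisely the looseness of the stated bounds $\lfloor (n-4)/2\rfloor\le t\le \lceil (n+4)/2\rceil$ that makes the cruder estimate sufficient.
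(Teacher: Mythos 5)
Your proposal is correct and follows essentially the same route as the paper's proof: factor $\charpoly$ as $(x+1)^{n-5}$ times the degree-$6$ polynomial from Theorem~\ref{spectra-steiner-bistar}, expand via the binomial theorem, verify the sign of coefficients (and hence unimodality) from Corollary~\ref{cor:dist-matrix-unim}, check a few boundary inequalities directly, and then analyze the sign of $|a_k|-|a_{k-1}|$ over the bulk range to locate the turning window around $k\approx n/2$. The only cosmetic difference is in exactly which boundary indices you check by hand versus which you cover in the bulk sum, and that does not change the argument.
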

	 
\begin{proof}
Since our proofs for both parts are very similar, we give details for 
the first part and only sketch details of the second part.
		 
\noindent
\textbf{Proof of item 1.} By Theorem \ref{spectra-steiner-bistar}, 
it follows that the characteristic polynomial of 
$\DD_2(S_{1,n-3})[B_1,B_1]$ is 
%\begin{align*}
$h(x)=(x+1)^{n-5}\big[x^6-2(n-1)x^5-3(7n-12)x^4-18(3n-7)x^3-5(9n-22)x^2+
(13n-28)x-(n-1)\big].$
%h(x)&=(x+1)^{n-5}\big[x^6-2(n-1)x^5-3(7n-12)x^4-18(3n-7)x^3-5(9n-22)x^2\\
%&~~~~~~~~~~~~~~~~~~~~~~~~-(13n-28)x-(n-1)\big].
			 % &=-\Big[(n-1)+(n^2+7n-23)x+\frac{1}{2}(n^3+14n^2-55n+30)x^2+\frac{1}{6}(n^4+20n^3-118n^2\\
			 % &~~~~~~~~~+91n+234)x^3+\frac{1}{24}(n^5+25n^4-231n^3+299n^2+1562n-3504)x^4\\
			 %&~~~~~~~~~+\frac{1}{120}(n^6+29n^5-410n^4+985n^3+5389n^2-28674n+36480)x^5\Big]\\
			 %&~~~-\sum_{k=6}^{n-5} \bigg[(n-1)\binom{n-5}{k}+(13n-28)\binom{n-5}{k-1}+(45n-110)\binom{n-5}{k-2}+(54n-126)\cdot\\
			 %&~~~~~~~~~~~~~~~\binom{n-5}{k-3}+(21n-36)\binom{n-5}{k-4}+2(n-1)\binom{n-5}{k-5}-\binom{n-5}{k-6}\bigg]x^k\\
			 %&~~~-\Big[\frac{1}{120}(9n^5+185n^4-2755n^3+10255n^2-7334n-14640)x^{n-4}+\frac{1}{24}(7n^4+126n^3\\
			 %&~~~~~~~~-1159n^2+2418n-480)x^{n-3}+\frac{1}{6}(5n^3+72n^2-383n+354)x^{n-2}\\
			 %&~~~~~~~~~+\frac{1}{2}(3n^2+29n-41)x^{n-1}+(n+3)x^n\Big]+x^{n+1}
%		 	\end{align*} 
		 
If $a_k$ is the coefficient of $x^k$ in $h(x)$, then, we have
\begin{align*}
a_0 &=-(n-1),~~~a_1 =-(n^2+7n-23),~~~a_2=-\frac{1}{2}(n^3+14n^2-55n+30),\\
a_3 &=-\frac{1}{6}(n^4+20n^3-118n^2+91n+234),\\
a_4&=-\frac{1}{24}(n^5+25n^4-231n^3+299n^2+1562n-3504)\\
%a_5&=-\frac{1}{120}(n^6+29n^5-410n^4+985n^3+5389n^2-28674n+36480),\\
a_k&=-\sum_{k=6}^{n-5} \bigg[(n-1)\binom{n-5}{k}+(13n-28)\binom{n-5}{k-1}+(45n-110)\binom{n-5}{k-2}+(54n-126)\cdot\\
&~~~\binom{n-5}{k-3}+(21n-36)\binom{n-5}{k-4}+2(n-1)\binom{n-5}{k-5}-\binom{n-5}{k-6}\bigg]~ \text{for}~ 6\leq k\leq n-5,\\
%a_{n-4}&=-\frac{1}{120}(9n^5+185n^4-2755n^3+10255n^2-7334n-14640),\\
a_{n-3}&=-\frac{1}{24}(7n^4+126n^3-1159n^2+2418n-480),\\
a_{n-2}&=-\frac{1}{6}(5n^3+72n^2-383n+354),~a_{n-1}=-\frac{1}{2}(3n^2+29n-41),~ a_n=-(n+3),~ a_{n+1}=1.
		 \end{align*}

It is easy to check when $n \geq 6$ that $|a_0|\leq |a_1|\leq |a_2|$ and 
that $|a_{n-2}|\geq |a_{n-1}|\geq |a_{n}|$.  When $7\leq k\leq n-5$, it is 
again easy to see that we have 
\begin{align*}
&~|a_k|-|a_{k-1}|\\  
&=\binom{n-5}{k-1}\bigg(\frac{(n-1)(n-2k-4)}{k}\bigg)+\binom{n-5}{k-2}\bigg(\frac{(13n-28)(n-2k-2)}{k-1}\bigg)\\
&~~~+\binom{n-5}{k-3}\bigg(\frac{(45n-110)(n-2k)}{k-2}\bigg)+\binom{n-5}{k-4}\bigg(\frac{(54n-126)(n-2k+2)}{k-3}\bigg)\\
&~~~+\binom{n-5}{k-5}\bigg(\frac{(21n-36)(n-2k+4)}{k-4}\bigg)+\binom{n-5}{k-6}\bigg(\frac{2(n-1)(n-2k+6)}{k-5}\bigg)\\
&~~~-\binom{n-5}{k-7}\bigg(\frac{(n-2k+8)}{k-6}\bigg).
\end{align*}
Hence, when $7\leq k\leq n-5$, one can check that
$|a_k|\geq |a_{k-1}|$ if and only if $k\leq \frac{n-4}{2}$ and 
$|a_k|\leq |a_{k-1}|$ if and only if $k\geq \frac{n+6}{2}$. Thus, we have  
$|a_0|\leq |a_1|\leq |a_2| \leq \hdots \leq |a_{\lfloor \frac{n-4}{2} 
\rfloor}|$ 
and $|a_{\lceil \frac{n+6}{2}-1\rceil}| \geq |a_{\lceil \frac{n+6}{2}\rceil}| 
\geq \hdots \geq |a_{n-2}|\geq |a_{n-1}|\geq |a_{n}|$. Hence, if 
$|a_t|=\max_{0\leq k\leq n-2} |a_k|$, then $\lfloor \frac{n-4}{2} \rfloor 
\leq t \leq \lceil \frac{n+4}{2}\rceil$. This completes the proof of
the first part.
		 
\noindent \textbf{Proof of item 2.} By Theorem \ref{spectra-steiner-bistar}, 
it follows that the characteristic polynomial of $\DD_2(S_{1,n-3})[B_2,B_2]$ is 
$h(x)=(x+1)^{n-5}\big[x^6-2(n-1)x^5-(21n-22)x^4-(62n-141)x^3-(53n-133)x^2-
(15n-34)x-(n-1)\big]$. As the rest of the proof is similar to the first case, 
we omit its details. 
\end{proof}

\subsection{Bounds on the peak location of the Path}
\label{subsec:peak-locn-path}
	 
\def\DPn{\mathfrak{D}_{P_{n}}}
	 \def\DP#1{\mathfrak{D}_{P_{#1}}}
	 \def\dST{\mathrm{d}_{\mathrm{ST}}}
	 \def\1{\mathbf{1}}
	 \def\0{\mathbf{0}}
	 \def\v{\mathbf{v}}
	 
	 %\blue{Change $\mathfrak{B}$ to $B$ in this subsection. }
	 
For a matrix $M$ and index sets $\alpha$ and $\beta$, the submatrix of $M$ restricted to the rows 
in  $\alpha$  and the columns in  $\beta$ is denoted by 
$M[\alpha,\beta]$.
When $\alpha=\beta$, we use the notation $M[\alpha]$ to denote the principal
submatrix $M[\alpha,\alpha]$ of $M$. 
We also use the notation $M(\alpha|\beta)$ to denote the submatrix of $M$ 
obtained by deleting the rows corresponding to $\alpha$  and the 
columns corresponding to $\beta$. We recall some results 
from \cite{aliazimi-siva-steiner-2-dist}.

\begin{lemma} \cite[Lemma 9, 11]{aliazimi-siva-steiner-2-dist} 
\label{lem:steiner-v}
Suppose $T$ is a  a tree of order $n$ with $p$ leaves. 
Let $B$ be a basis of $\DD_2(T)$ as defined in Remark 
\ref{remark: basis-2-steiner} with 
$B =\{e_1,e_2,\ldots,e_{n-1},f_1,\ldots,f_{n-p}\}$ 
and let $\v$ be the column vector defined as
$v_{e_i} = 1- \sum_{e_i\in f_j'} \left(|B_j|-1\right)$ and 
$v_{f_i} = |B_i|-1, \text{ where $f_i'\in B_i$}.$
Then $\1^t\v =1$ and $\DD_2[B,B]\v=(n-1)\1$. 
\end{lemma}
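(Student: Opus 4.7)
The plan is to prove the two claims separately: $\1^t \v = 1$ reduces to a short double-counting identity, while $\DD_2[B,B]\v=(n-1)\1$ requires an edge-by-edge decomposition of the 2-Steiner distance followed by a cancellation argument. The second assertion is what I expect to be the main obstacle.

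For $\1^t \v = 1$, I would expand
\[
\1^t\v = (n-1) - \sum_{i=1}^{n-1}\sum_{j:\, e_i \in f_j'}(|B_j|-1) + \sum_{j=1}^{n-p}(|B_j|-1),
\]
swap the order of summation in the middle term, and use the fact that each $f_j'$ is an edge of $\LG(T)$ with exactly two endpoints; the double sum collapses to $2\sum_j(|B_j|-1)$. Since the blocks of $\LG(T)$ are in bijection with the non-leaf vertices of $T$, with $|B_j|$ equal to the degree of the associated vertex, the handshake lemma yields $\sum_j |B_j|=2(n-1)-p$, hence $\sum_j(|B_j|-1)=n-2$, and thus $\1^t\v=(n-1)-(n-2)=1$.

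For $\DD_2[B,B]\v=(n-1)\1$, my approach rests on the edge-indicator characterisation
\[
\DD_2(\{i,j\},\{k,l\}) = \sum_{e \in E(T)} \mathbf{1}\bigl[e \text{ separates } \{i,j,k,l\}\bigr],
\]
where an edge $e$ is said to separate a vertex set $S$ if each component of $T\setminus e$ meets $S$. Fixing a row index $b\in B$ and interchanging sums, the $b$-th entry of $\DD_2[B,B]\v$ equals $\sum_{e\in E(T)} S_e(b)$, where $S_e(b)=\sum_{b'\in B}\mathbf{1}[e \text{ separates } b\cup b']\, v_{b'}$. Since $|E(T)|=n-1$, it suffices to show $S_e(b)=1$ for every edge $e$. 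Let $V_1,V_2$ denote the components of $T\setminus e$. If $b$ straddles the cut, then $e$ separates $b\cup b'$ for every $b'\in B$, so $S_e(b)=\1^t\v=1$ by the first part; otherwise, assuming without loss of generality $b\subseteq V_1$, we need the cancellation identity
\[
\sum_{b'\in B,\ b'\subseteq V_1} v_{b'}=0.
\]

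The main obstacle is this cancellation. I would prove it by a block-by-block analysis based on the position of the pivot vertex $w_j$ of each block $B_j$ with respect to $\{V_1,V_2\}$: blocks whose pivot lies in $V_1\setminus\{x\}$ (where $x$ is the $V_1$-endpoint of $e$) contribute through the incident edges $e_k\subseteq V_1$ and through $f_j\subseteq V_1$; the block pivoted at $x$ contributes differently depending on whether the chosen $f_j'$ uses the cut edge $e$; and blocks on the $V_2$ side contribute, at most, certain $f_j$'s that straddle the cut. The defining weights $v_{e_k}=1-\sum_{j:\, e_k\in f_j'}(|B_j|-1)$ and $v_{f_j}=|B_j|-1$ are designed precisely so that within each block the constants coming from the edges $e_k\subseteq V_1$ cancel against the correction terms from the associated $f_j$'s. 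A clean way to finish is by induction on the number of non-leaf vertices of $T$ lying inside $V_1$, peeling off a block pendant to the restricted subtree at each step and using the local handshake identity applied to that block, with the base case $V_1=\{x\}$ being vacuous.
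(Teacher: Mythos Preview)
The paper does not give a proof of this lemma at all; it is quoted from \cite[Lemmas~9 and 11]{aliazimi-siva-steiner-2-dist} and used as a black box. So there is nothing in the present paper to compare your argument against.

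That said, your plan is sound. The computation of $\1^t\v=1$ is complete and correct. For $\DD_2[B,B]\v=(n-1)\1$, the edge--separation decomposition and the reduction to the cancellation identity
\[
\sum_{b'\in B,\ b'\subseteq V_1} v_{b'}=0
\]
are exactly right. You do not actually need the induction you propose for this last step: a single handshake count on the subtree $T[V_1]$ does it directly. Writing $E_1$ for the edges inside $V_1$ and summing degrees over the non-leaf vertices of $T$ lying in $V_1$ gives $\sum_{j:\,w_j\in V_1}|B_j|=2|E_1|+1-p_1$ (the ``$+1$'' comes from the cut edge $e$ at $x$), hence $\sum_{j:\,w_j\in V_1}(|B_j|-1)=|E_1|$. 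Plugging this into your block-by-block split, both sub-cases (whether or not $f_{j_x}'$ uses the cut edge $e$) collapse to $|E_1|-|E_1|=0$. This replaces the inductive peeling and makes the last paragraph a three-line computation.
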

	 
	 \begin{remark}
		 \label{remark:v-for-path}
		 When $T=P_n$, a path on $n$ vertices, the vector $\v$ 
		 defined in Lemma \ref{lem:steiner-v} is given by 
		 $
		 v_{f_j} =1$ for $1\leq j\leq n-p$, and for $1\leq i\leq n-1$, $v_{e_i}=\begin{cases}
			 0 & \text{if $e_i$ is a pendant edge},\\
			 -1 & \text{otherwise;}
		 \end{cases}
		 $
	 \end{remark}

\begin{remark}
\label{remark: basis Pn}
Let $P_n$ be the path on $n$ vertices with edges $e_i=\{i,i+1\}$ 
for $i=1,\ldots, n-1$. Let  $B=(e_1,f_1,e_2,f_2,\ldots,
e_{n-2},f_{n-2},e_{n-1})$ be the ordered basis for the 
row space of $\DD_2(P_n)$ where  $f_j=\{j,j+2\}$,  
for $j=1,\ldots, n-2$.  We follow this particular ordering 
of $B$ to order the rows and columns of $\DD_2(P_n)[B,B]$.
\end{remark}
	 
We denote by $\DPn$ the matrix 
$\DD_2(P_n)[B, B]$, that is, $\DPn:=\DD_2(P_n)[B, B]$.

\begin{remark}\label{remark:laplacian-like}
By the definition of the Laplacian-type matrix outlined 
in \cite[Page 77]{aliazimi-siva-steiner-2-dist}, we define the
matrix $L$ whose rows and columns are indexed by the elements of 
$B$ with entries as follows:
the entries $L(e_i,e_j)$ and $L(f_i,f_j)$ are zero if $i\neq j$. 
Further, define
$$
L(x,x) =\begin{cases}
2 & \text{if  $x\in B\setminus\{e_1,e_{n-1}\}$},\\
1 & \text{if $x\in \{e_1,e_{n-1}\}$}, 
\end{cases}, \quad \text{and} \quad L(e_i,f_k) = \begin{cases}
-1 & \text{if $i\in\{k-1,k\}$},\\
0 & \text{otherwise}.
\end{cases}$$
Note that $L$ is a symmetric tridiagonal matrix 
$$
L=\left(\begin{array}{rrrrrrr}
1  & -1 & 0  & 0  & \cdots  & 0    \\
-1 & 2  & -1 & 0  & \cdots  & 0    \\
0  & -1 & 2  & -1 & \cdots  & 0    \\
\vdots  & \vdots   &\ddots  & \ddots  &\ddots   & \vdots    \\
0  & 0  & \cdots & -1  & 2  & -1 \\
0  & 0  &    \cdots  & 0   & -1 & 1 
\end{array}\right).
$$
\end{remark}
	 
The following result is a special case of  
\cite[Theorem 1 and  2]{aliazimi-siva-steiner-2-dist} and provides 
the inverse of  $\DPn$.  
	 
\begin{theorem}\label{th:det-inv-st-path}
Let $P_n$ is the path on $n$ vertices  and  
let $B=(e_1,f_1,e_2,f_2,\ldots,e_{n-2},f_{n-2},e_{n-1})$ 
be the ordered basis  of $\DD_2(P_n)$ as defined in Remark 
\ref{remark: basis Pn}.  Then  $\det\DPn = (n-1)$ and 
$ \DPn^{-1} = -L +\frac{1}{n-1} \v \v^t.$
\end{theorem}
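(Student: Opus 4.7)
The plan is to verify the proposed inverse formula by direct multiplication and then deduce the determinant as an immediate consequence.

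Set $M := -L + \frac{1}{n-1}\v\v^t$. By Lemma \ref{lem:steiner-v}, $\DPn\v=(n-1)\1$ and $\1^t\v=1$, so
\[\DPn M = -\DPn L + \frac{1}{n-1}(\DPn\v)\v^t = -\DPn L + \1\v^t.\]
Thus $\DPn M = I$ is equivalent to the single matrix identity $\DPn L = \1\v^t - I$, i.e.\ $(\DPn L)_{xy} = v_y - \delta_{xy}$ for every $x,y\in B$.

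The key structural input is that $L$ is symmetric tridiagonal in the ordering $B=(e_1,f_1,e_2,f_2,\ldots,e_{n-2},f_{n-2},e_{n-1})$ from Remark \ref{remark:laplacian-like}, so each column of $\DPn L$ is a $\pm 1$-combination of at most three adjacent columns of $\DPn$. I would verify the identity column-by-column, splitting on whether $y=e_j$ or $y=f_j$, handling the boundary entries $y\in\{e_1,e_{n-1}\}$ separately since they carry a different diagonal entry of $L$. In each case the required equation collapses to a short arithmetic statement relating three 2-Steiner distances in $P_n$ to the value $v_y-\delta_{xy}$, with $v_y$ read off from Remark \ref{remark:v-for-path}.

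For the determinant, once $\DPn^{-1}=M$ is established, $\det(\DPn)=1/\det(M)$. Note that $L$ is the Laplacian of the path $P_{2n-3}$, so $\det(L)=0$ and the Matrix-Tree theorem gives $\mathrm{adj}(L)=J$ (since $P_{2n-3}$ has a unique spanning tree). Applying the matrix determinant lemma to the rank-one update $M = -L + \frac{1}{n-1}\v\v^t$:
\[\det(M)=\det(-L)+\frac{1}{n-1}\v^t\,\mathrm{adj}(-L)\,\v = \frac{(-1)^{2n-4}}{n-1}(\v^t\1)^2 = \frac{1}{n-1},\]
where we used $\mathrm{adj}(-L)=(-1)^{2n-4}\mathrm{adj}(L)=J$ and $\v^t\1=1$. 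Hence $\det(\DPn)=n-1$.

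The main obstacle is the column-by-column verification of $\DPn L=\1\v^t-I$. Each column is short thanks to the tridiagonality of $L$, but the case analysis---edge vs.\ pair, interior vs.\ boundary, pendant vs.\ non-pendant edge---demands careful bookkeeping with 2-Steiner distances in $P_n$ alongside the explicit values $v_{e_j},v_{f_j}$ from Remark \ref{remark:v-for-path}.
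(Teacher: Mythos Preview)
The paper does not prove this theorem; it simply records it as a special case of \cite[Theorems 1 and 2]{aliazimi-siva-steiner-2-dist}. Your proposal, by contrast, is a self-contained argument, so there is nothing to compare against except the cited reference.

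Your overall strategy is sound. The reduction $\DPn M = I \iff \DPn L = \1\v^t - I$ is correct, using Lemma \ref{lem:steiner-v} exactly as you do. Your determinant computation is complete and rather nice: recognizing $L$ as the combinatorial Laplacian of the path $P_{2n-3}$ lets you invoke the Matrix-Tree theorem to get $\mathrm{adj}(L)=J$, and the singular version of the matrix determinant lemma, $\det(A+uw^t)=\det(A)+w^t\,\mathrm{adj}(A)\,u$, then gives $\det(M)=\frac{1}{n-1}$ immediately. This part stands on its own and is arguably cleaner than appealing to the general determinant formula in \cite{aliazimi-siva-steiner-2-dist}.

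The only gap is that the heart of the argument---the identity $\DPn L = \1\v^t - I$---is planned but not executed. Your description of the case split is accurate: tridiagonality of $L$ means each column of $\DPn L$ is $2\DPn(\cdot,y)-\DPn(\cdot,y^-)-\DPn(\cdot,y^+)$ for the two neighbours $y^\pm$ of $y$ in the ordering (with the obvious modification at $y\in\{e_1,e_{n-1}\}$), and one must check this equals the constant $v_y$ except at the diagonal entry. For $P_n$ the 2-Steiner distances are explicit, so each case is a short arithmetic check, but there are several of them (the row index $x$ also splits into edge/pair and into ``left of $y$'', ``overlapping $y$'', ``right of $y$''). None of this is conceptually difficult, but as written you have a correct outline rather than a proof; to turn it into one you would need to actually carry out those verifications.
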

	 
We need the following result (see  Horn and Johnson \cite[Page 18]
{horn-johnson-matrix-analysis}), about the blocks in the inverse of a 
	 partitioned nonsingular matrix $M$.
	 \begin{lemma}\label{lem:partition-inverse}
		 Let $M$ be a nonsingular matrix and $\alpha$ be a subset of the 
		 index set of $M$'s rows and columns.  Let $\alpha^c$ denote 
		 the complement set of $\alpha$ and suppose $M^{-1}[\alpha]$ 
		 and $M[\alpha^c]$ are invertible. Then,
		 $$\left(M^{-1}[\alpha]\right)^{-1}= M[\alpha]-M\left[\alpha, \alpha^c\right]M\left[\alpha^c\right]^{-1} M\left[\alpha^c, \alpha\right].$$
	 \end{lemma}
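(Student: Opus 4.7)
The plan is to derive the identity by applying the standard block LDU (Schur) factorization to $M$. First, I would write $M$ in block form relative to the partition $(\alpha,\alpha^c)$ as
\[
M = \begin{pmatrix} A & B \\ C & D \end{pmatrix},
\]
with $A = M[\alpha]$, $B = M[\alpha,\alpha^c]$, $C = M[\alpha^c,\alpha]$, and $D = M[\alpha^c]$. Since $D$ is invertible by hypothesis, setting $S = A - B D^{-1} C$ (the Schur complement of $D$ in $M$), a direct multiplication verifies the factorization
\[
M = \begin{pmatrix} I & B D^{-1} \\ 0 & I \end{pmatrix}
    \begin{pmatrix} S & 0 \\ 0 & D \end{pmatrix}
    \begin{pmatrix} I & 0 \\ D^{-1} C & I \end{pmatrix}.
\]

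Next, because $M$ is nonsingular and $D$ is nonsingular, $S$ must also be nonsingular, so I can invert the factorization; the outer block-triangular factors have inverses obtained simply by negating the off-diagonal blocks. This yields
\[
M^{-1} = \begin{pmatrix} I & 0 \\ -D^{-1} C & I \end{pmatrix}
         \begin{pmatrix} S^{-1} & 0 \\ 0 & D^{-1} \end{pmatrix}
         \begin{pmatrix} I & -B D^{-1} \\ 0 & I \end{pmatrix},
\]
and reading off the $(\alpha,\alpha)$ block gives $M^{-1}[\alpha] = S^{-1}$. Since $M^{-1}[\alpha]$ is invertible by hypothesis, inverting both sides produces exactly
\[
\bigl(M^{-1}[\alpha]\bigr)^{-1} = S = M[\alpha] - M[\alpha,\alpha^c]\, M[\alpha^c]^{-1}\, M[\alpha^c,\alpha],
\]
as claimed.

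There is no real obstacle here; the only item to check is the validity of the block factorization and its inverse, both of which reduce to routine $2\times 2$ block multiplication. The only subtlety is bookkeeping the role of each hypothesis: invertibility of $M[\alpha^c]$ is what makes $S$ well-defined, nonsingularity of $M$ forces $S$ to be invertible (so that the factorization can itself be inverted), and invertibility of $M^{-1}[\alpha]$ legitimizes the final inversion step. An alternative route would avoid the factorization altogether and instead verify directly, from the block form of $M M^{-1} = I$, that the stated right-hand side multiplied by $M^{-1}[\alpha]$ equals the identity; I prefer the factorization approach because it exposes the Schur-complement structure most transparently.
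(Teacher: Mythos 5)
Your proof is correct. Note that the paper does not supply its own proof of this lemma; it is quoted directly from Horn and Johnson, and your block LDU / Schur-complement factorization is precisely the standard argument one finds there. As a small observation, your derivation shows the hypothesis that $M^{-1}[\alpha]$ be invertible is actually redundant: the factorization gives $\det M = \det S \cdot \det M[\alpha^c]$, so $S$ (and hence $M^{-1}[\alpha] = S^{-1}$) is automatically nonsingular once $M$ and $M[\alpha^c]$ are.
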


In our next result, we find the principal minors of $\DPn$ of size $r-1$. 
	 
\begin{theorem}\label{th:principal-monors}
Let $P_n$ be a path on $n\geq 3$ vertices and $\DD_2(P_n)$ be 
its $2$-Steiner distance matrix. If $B$ is a basis of $\DD_2(P_n)$'s row space and $\alpha \in B$, then
$$
\det \DPn(\alpha | \alpha) = 
		 \begin{cases}
			 -(n-1) & \text{ if $\alpha$ is a pendant edge in $P_n$},\\
			 -(2n-3) & \text{otherwise}.
		 \end{cases}
		 $$
	 \end{theorem}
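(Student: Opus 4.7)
The plan is to exploit the explicit inverse of $\DPn$ given in Theorem \ref{th:det-inv-st-path}, combined with the classical cofactor identity: for any nonsingular matrix $M$ and single-element index $\alpha$,
$$\det M(\alpha \mid \alpha) \;=\; \det(M)\cdot (M^{-1})_{\alpha\alpha}.$$
Applying this with $M = \DPn$ and substituting $\det \DPn = n-1$ together with $\DPn^{-1} = -L + \tfrac{1}{n-1}\v\v^t$ immediately reduces the problem to reading off diagonal entries:
$$\det \DPn(\alpha \mid \alpha) \;=\; (n-1)\!\left(-L_{\alpha\alpha} + \frac{v_\alpha^2}{n-1}\right) \;=\; -(n-1)\,L_{\alpha\alpha} + v_\alpha^2.$$

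After this reduction, I would finish the proof with a short case analysis using Remarks \ref{remark:v-for-path} and \ref{remark:laplacian-like}. If $\alpha\in\{e_1,e_{n-1}\}$, i.e. one of the two pendant edges, then $L_{\alpha\alpha}=1$ and $v_\alpha=0$, and the expression evaluates to $-(n-1)$, as required. For every other basis element, namely any interior edge $e_i$ with $2\leq i\leq n-2$ or any $f_j$ with $1\leq j\leq n-2$, Remark \ref{remark:laplacian-like} gives $L_{\alpha\alpha}=2$ and Remark \ref{remark:v-for-path} gives $v_\alpha\in\{-1,1\}$, so $v_\alpha^2=1$ and the expression becomes $-2(n-1)+1 = -(2n-3)$. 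Both branches match the claimed formula.

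I do not anticipate a substantive obstacle: once the explicit inverse is in hand, the computation is forced by the definitions of $L$ and $\v$, and the whole argument fits in a few lines. The only point requiring mild care is the interpretation of ``pendant edge'' within the basis $B$, since $B$ contains both the edges $e_i$ of $P_n$ and the auxiliary elements $f_j$. The $f_j$'s are not edges of $P_n$ and therefore fall into the ``otherwise'' branch of the statement; this is consistent with $L_{f_j,f_j}=2$ and $v_{f_j}=1$, which produce the value $-(2n-3)$ in the formula above.
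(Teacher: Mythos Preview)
Your proof is correct and considerably more direct than the paper's. The paper proceeds by induction on $n$: after verifying the base cases $n=3,4$ explicitly, it performs elementary row and column operations on $\DPn(\alpha\mid\alpha)$ to expose a block structure involving $\DP{n-1}$ or $\DP{n-1}(\alpha\mid\alpha)$, then applies Schur complement determinantal formulas together with Lemma~\ref{lem:partition-inverse} (the partitioned-inverse identity) and a nontrivial amount of algebra to close the induction. Your argument bypasses all of this by invoking the single-entry cofactor identity $\det M(\alpha\mid\alpha)=\det(M)\,(M^{-1})_{\alpha\alpha}$ and reading off the diagonal of the explicit inverse already supplied by Theorem~\ref{th:det-inv-st-path}; the case split then drops out of Remarks~\ref{remark:v-for-path} and~\ref{remark:laplacian-like} in two lines. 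Since both approaches rest on the same prior result (Theorem~\ref{th:det-inv-st-path}), yours is simply a shorter path to the same destination; the paper's inductive machinery does not seem to yield any extra information that would justify the longer route.
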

	 
	 \begin{proof} 
		 Our proof is by induction on $n$. Let $B=\{e_1,f_1,e_2\}$ be a 
		 basis of $\DD_2(P_3)$. Note that $\DP3=\left(\begin{array}{rrr}
			 1 & 2 & 2 \\
			 2 & 2 & 2 \\
			 2 & 2 & 1
		 \end{array}\right)$. Clearly $\DP3(f_1 | f_1)=-3$ and  
		 $\DP3(e_i | e_i)=-2$ when $i=1,2$. Hence, the result holds 
		 when $n=3$. Further, note that 
		 $\DP4 = \left(\begin{array}{rrrrr}
			 1 & 2 & 2 & 3 & 3 \\
			 2 & 2 & 2 & 3 & 3 \\
			 2 & 2 & 1 & 2 & 2 \\
			 3 & 3 & 2 & 2 & 2 \\
			 3 & 3 & 2 & 2 & 1
		 \end{array}\right).$ 
		 One can verify that $\det \DP4(e_1|e_1) =\det \DP4(e_3|e_3)=-3$ 
		 and that $\det\DP4(\alpha|\alpha)=-5$ for $\alpha\in \{f_1,e_2,f_2\}$. 
		 Hence, our result is also true when $n=4$.

		 Assume that the statement is true for all path on $k$ vertices, where $k\leq n-1$. 
		 By $P_n$ we mean the  path on $n>4$ vertices with  $e_i=\{i,i+1\}$ 
		 for $i=1,\ldots, n-1$ and let $f_j=\{j,j+2\}$ for $j=1,\ldots, n-2$. 
		 Let $B_n=(e_1,f_1,e_2,f_2,\ldots,e_{n-2},f_{n-2},e_{n-1})$ be an 
		 ordered basis of $\DD_2(P_n)$'s row space and 
		 $\DPn= \DD_2(P_n)[B_n,B_n]$. 
		 Further note that $\dST(e_1,b_i) =\dST(f_1,b_i)$ for each 
		 $b_i\in B_n\setminus \{e_1\}$ and  $\dST(f_1,b_i) =\dST(e_2,b_i)$ 
		 for each $b_i\in B_n\setminus \{e_1,f_1\}$.  
		 For $x\in B_n$, we write $r_x$ (respectively $c_x$) to denote 
		 the row (respectively column)  corresponding to $x$. By 
		 performing the elementary row operations 
		 $r_{f_1}=r_{f_1}-r_{e_2}$ and $c_{f_1}=c_{f_1}-c_{e_2}$ 
		 on the matrix $\DPn(e_1|e_1)$ we get 
		 $
		 \left( \begin{array}{r|c}
			 -1 & \1^t\\ \hline 
			 \1 & \DP{n-1}
		 \end{array}\right),
		 $
		 where $\DP{n-1}=\DD_2(P_{n-1})[B_{n-1},B_{n-1}]$.  By 
		 Lemma \ref{lem:steiner-v}, there exist $v$ such that 
		 $\1^tv=1$ and $\DP{n-1}v=(n-2)\1$.   By Schur complements and the 
		 determinantal formula \cite[sec. 0.8.5]{horn-johnson-matrix-analysis} 
		 and Theorem \ref{th:det-inv-st-path}, we get 
		 \begin{align*}
			 \det \DPn(e_1|e_1) = \det(\DP{n-1}) \left(-1- \1^t \DP{n-1}^{-1}\1\right) = (n-2) \left(-1-  \frac{\1^t v}{n-2}\right) =-(n-1).
		 \end{align*}
		 
		 Analogously, we have $ \det \DPn(e_{n-1}|e_{n-1}) =-(n-1)$.  
		 Let $\alpha\in \{f_1,e_{2},\ldots, e_{n-2}, f_{n-2}\}$. Since $n>4$, 
		 without loss of generality, we may assume that $\{e_1,f_1,e_2\} 
		 \subset \alpha^c$. By performing the row operations 
		 $r_{f_1}=r_{f_1}-r_{e_2}$ and $c_{f_1}=c_{f_1}-c_{e_2}$ on 
		 the matrix $\DPn(\alpha|\alpha)$ we get 
		 \begin{align*}
			 \DPn(\alpha|\alpha) \sim   \left(\begin{array}{rr|c}
				 -2 & 1 & \1^t  \\
				 1 & -1  & \0^t \\  \hline 
				 \1 & \0 & \DP{n-1}(\alpha|\alpha) 
			 \end{array}\right).
		 \end{align*}
		 
		 Again, by applying Schur complements and the determinantal formula, 
		 we get 
		 \begin{equation}
			 \det \DPn(\alpha|\alpha)  =\det  \DP{n-1}(\alpha|\alpha)  \det \left[
			 \left(\begin{array}{cc}
				 -2 & 1 \\
				 1 & -1   
			 \end{array}\right)
			 - X^t  \DP{n-1}(\alpha|\alpha)^{-1} X
			 \right], \label{eq:schur-complement-minor}
		 \end{equation}
		 where $X= \begin{pmatrix}	\1 & \0 \end{pmatrix} $.  By Lemma \ref{lem:partition-inverse}, we get 
		 \begin{equation}\label{eq:inverse-D-partition}
			 \DP{n-1}[\alpha^c]^{-1} =  \DP{n-1}^{-1}[\alpha^c] - \DP{n-1}^{-1}[\alpha^c,\alpha] \left(\DP{n-1}^{-1}[\alpha]\right)^{-1}\DP{n-1}^{-1}[\alpha,\alpha^c].
		 \end{equation}
		 Let $L$ be the Laplacian-like matrix for the tree $P_{n-1}$, as described in Remark \ref{remark:laplacian-like}. Suppose $v[\alpha] =t$. Clearly $t\in\{-1,1\}$. By Theorem \ref{th:det-inv-st-path}, we get 
		 \begin{equation}\label{eq:m11}
			 \DP{n-1}^{-1}[\alpha] = -L[\alpha] + \frac{1}{n-2} v[\alpha](v[\alpha])^t = -2+\frac{1}{n-2} =-  \frac{2n-5}{n-2}.
		 \end{equation}
		 Note that $\1^tv[\alpha^c]\1 +v[\alpha] =1$. Since $L\1=\0$, it follows that $\1^tL[\alpha^c]\1  = 2$. Hence, by Theorem \ref{th:det-inv-st-path}, we get 
		 \begin{equation} \label{eq:m12}
			 X^t  \DP{n-1}^{-1}[\alpha^c] X =  - \begin{pmatrix}
				 \1^t L[\alpha^c]\1 & 0 \\
				 0 & 0
			 \end{pmatrix} +
			 \frac{1}{n-2}
			 \begin{pmatrix}
				 \1^t v[\alpha^c] (v[\alpha^c])^t\1 & 0 \\
				 0 & 0
			 \end{pmatrix}  = \begin{pmatrix}
				 -2 + \dfrac{(1-t)^2}{n-2} & 0\\0&0
			 \end{pmatrix}.
		 \end{equation}
		 Further, note that 
		 \begin{equation} \label{eq:m13}
			 X^t  \DP{n-1}^{-1}[\alpha^c,\alpha]  = X^t
			 \left[ -L[\alpha^c,\alpha] + \frac{1}{n-2} v[\alpha^c](v[\alpha])^t\right]
			 = \begin{pmatrix}
				 2 + \dfrac{t(1-t)}{n-2} \\0
			 \end{pmatrix}.
		 \end{equation}
		 \begin{equation} \label{eq:m14}
			 \DP{n-1}^{-1}[\alpha,\alpha^c] X =
			 \left[ -L[\alpha] + \frac{1}{n-2} v[\alpha](v[\alpha])^t\right]X
			 = \begin{pmatrix}
				 2 + \dfrac{t(1-t)}{n-2}  & 0
			 \end{pmatrix}.
		 \end{equation}
		 By \eqref{eq:inverse-D-partition}, \eqref{eq:m11}, \eqref{eq:m12}, \eqref{eq:m13}, and \eqref{eq:m14}, we get 
		 \begin{equation*}  
			 X^t  \DP{n-1}(\alpha|\alpha)^{-1} X = \begin{pmatrix}
				 -2 + \dfrac{(1-t)^2}{n-2} & 0\\0&0
			 \end{pmatrix} +\frac{n-2}{2n-5} \begin{pmatrix}
				 \left[2 + \dfrac{t(1-t)}{n-2} \right]^2 & 0\\0&0
			 \end{pmatrix} .
		 \end{equation*}
		 On simplification we get
		 \begin{equation} \label{eq:m15}
			 X^t  \DP{n-1}(\alpha|\alpha)^{-1} X =  \begin{pmatrix}
				 \frac{2}{2n-5}+f(t)& 0\\0&0
			 \end{pmatrix},
		 \end{equation}
		 where   $f(t) = \frac{(1-t)^2}{n-2}+\frac{4t(1-t)}{2n-5}+\frac{t^2(1-t)^2}{(n-2)(2n-5)}
		 $. It is easy to note that $f(\pm 1)=0$. Hence, it follows from \eqref{eq:m15} that
		 \begin{equation} \label{eq:m16}
			 X^t  \DP{n-1}(\alpha|\alpha)^{-1} X  = \begin{pmatrix}
				 \frac{2}{2n-5}& 0\\0&0
			 \end{pmatrix}.
		 \end{equation}
		 By \eqref{eq:schur-complement-minor} and \eqref{eq:m16} we get 
		 $$
		 \det \DPn(\alpha|\alpha)  =  \frac{2n-3}{2n-5}\ \det \DP{n-1}(\alpha|\alpha)  .
		 $$
		 Thus, the result follows by induction and our proof
		 is complete.
	 \end{proof}

	 %%%%%%%%%%%%%%%%%%%%%%%%%%%%%%%%%%%%%%%%%%%%%%%%%%%%%%%%%%%%%%%%%%%%%%%%%%%%%%

	 We next present our upper bound on the peak location for coefficients
	 in the characteristic polynomial of 
	 $\DPn=\DD_2(P_n)[B,B]$, where the basis $B$ is defined as in Remark \ref{remark: basis Pn}.   .

	 \begin{theorem}
		 \label{th: peak-steiner-path}
		 Let $P_n$ be a path on $n> 2$ vertices and $\charpoly_{\DPn}(x) = 
		 \sum_{i=0}^{2n-3}a_ix^i$.  If $|a_\ell| = \max\{|a_0|,|a_1|, \ldots, |a_{2n-4}|\}$, then $\ell \leq \left\lfloor\dfrac{7n}{5}\right\rfloor$.
	 \end{theorem}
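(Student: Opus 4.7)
Plan. The plan combines the log-concavity of $(|a_k|)_{k=0}^{2n-4}$ proved in Theorem \ref{thm:unimodal-log-conc-2steiner-dist} with Jacobi's identity and the explicit inverse furnished by Theorem \ref{th:det-inv-st-path}. Because $(|a_k|)$ is positive and log-concave, the ratios $r_k := |a_k|/|a_{k-1}|$ are non-increasing in $k$; hence $\ell$ is the largest $k$ with $r_k \geq 1$, and it suffices to exhibit a single index $\ell_0 \leq \lfloor 7n/5 \rfloor + 1$ at which $r_{\ell_0} \leq 1$. To compute $|a_k|$ I would use Jacobi's identity: since $\det \DPn = n-1 \neq 0$, we have $E_{N-k}(\DPn) = (n-1)\,E_k(\DPn^{-1})$ with $N := 2n-3$, and therefore $|a_k| = (n-1)\,|E_k(\DPn^{-1})|$.

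Substituting $\DPn^{-1} = -L + \frac{1}{n-1}\v\v^t$ from Theorem \ref{th:det-inv-st-path} and applying the matrix-determinant lemma to each principal submatrix gives
\[
E_k(\DPn^{-1}) \;=\; (-1)^k E_k(L) \;+\; \frac{1}{n-1}\sum_{|S|=k}\v_S^t\,\mathrm{adj}\!\left(-L[S,S]\right)\v_S.
\]
Since $L$ is the Laplacian of the path $P_N$ (Remark \ref{remark:laplacian-like}), the matrix-tree theorem identifies each $\det(L[S,S])$ with the product of the gap-sizes that $\bar{S}$ cuts out of the ordered basis $B$; accordingly $E_k(L)$ equals a sum over compositions of $N$ into $N+1-k$ parts. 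A parallel expansion, weighted by the entries of $\v$, which by Remark \ref{remark:v-for-path} are $0$ on pendant edges, $-1$ on internal edges, and $+1$ on every chord $f_j$, rewrites the rank-one correction as a signed sum over the same compositions. For the adjugate terms that require inverting $L[S,S]$, I would invoke Lemma \ref{lem:partition-inverse} in the same spirit as the proof of Theorem \ref{th:principal-monors}.

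With this combinatorial description of $E_k(\DPn^{-1})$, the target inequality $|a_{\ell_0}| \leq |a_{\ell_0-1}|$ at $\ell_0 := \lfloor 7n/5 \rfloor + 1$ reduces to a direct comparison of two signed sums of products of gap-sizes. I would verify it by partitioning compositions according to the types of their summands (pendant edge, internal edge, chord) and estimating each class with elementary binomial inequalities. The main obstacle is controlling the rank-one correction $\frac{1}{n-1}\sum_{|S|=k}\v_S^t\,\mathrm{adj}(-L[S,S])\v_S$ sharply enough to extract the specific threshold $7n/5$ rather than a coarser bound such as $3n/2$: a naive trace- or Cauchy--Schwarz-type estimate loses a constant factor, and obtaining the stated exponent requires tracking the partial cancellations between the $+1$-entries of $\v$ at chord positions and the $-1$-entries at internal edges within each gap of the underlying composition. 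Once that bookkeeping is complete, the final ratio inequality is a routine binomial estimate valid for all $n > 2$, and the conclusion $\ell \leq \lfloor 7n/5 \rfloor$ follows from the monotonicity of $(r_k)$.
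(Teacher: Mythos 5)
Your proposal is a plan, not a proof, and you say so yourself: the decisive step---bounding the rank-one correction $\frac{1}{n-1}\sum_{|S|=k}\mathbf{v}_S^t\,\mathrm{adj}(-L[S,S])\mathbf{v}_S$ tightly enough to extract the specific constant $7/5$---is left undone and explicitly flagged as ``the main obstacle.'' Everything before that point (Jacobi's identity relating $|a_k|$ to the sum of $k\times k$ principal minors of the inverse, the substitution $\DD_2(P_n)[B,B]^{-1} = -L + \frac{1}{n-1}\mathbf{v}\mathbf{v}^t$, the per-submatrix matrix-determinant lemma) is correct but only sets up the calculation; without the promised bookkeeping there is no proof, and you even acknowledge that naive estimates would only give a coarser bound like $3n/2$.

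There is also a structural mismatch with the paper's argument. You use only ordinary log-concavity and therefore must verify $|a_{\ell_0}| \le |a_{\ell_0-1}|$ at an index $\ell_0$ near $7n/5$, which demands sharp control of coefficients deep inside the sequence. The paper avoids this by invoking \cite[Lemma 3.2(1)]{Abiad-Brimkov-Hayat-Khramova-Koolen-dist-char-poly}, which is a Newton-inequality (ultra-log-concavity of the normalized coefficients $a_k/\binom{N}{k}$, $N=2n-3$) bound: the peak lies at or before $j$ as soon as $\frac{N-j}{N(j+1)}\cdot\frac{|a_1|}{|a_0|}<1$, so only $|a_0|$ and $|a_1|$ are needed. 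The paper reads $|a_0|=n-1$ off Theorem \ref{th:det-inv-st-path} and computes $|a_1|=4n^2-14n+13$ by summing the size-$(2n-4)$ principal minors from Theorem \ref{th:principal-monors}---a result your proposal never uses, even though it is the machinery built precisely for this step. So the gap is twofold: the key estimate is not carried out, and the route chosen bypasses both the stronger Newton-type inequality and the purpose-built minor computation that make the paper's proof short.
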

	 
	 \begin{proof}
To prove the result we will use Lemma 3.2(1) of\cite{Abiad-Brimkov-Hayat-Khramova-Koolen-dist-char-poly}. Since $\det (\DPn) = (n-1)$, we have $|a_0| = n-1$. Furthermore, by Theorem \ref{th:principal-monors}, the sum of all principal minors of $\det \DD_2(P_n)[B,B]$ of size $2n-4$ is given by
		 $$
		 -(2n-5)(2n-3) -2(n-1) = -(4n^2-14n+13).
		 $$
		 It follows that $|a_1|= 4n^2-14n+13$.  Now note that 
		 $$
		 \dfrac{(2n-3)-j}{(2n-3)(j+1)} \dfrac{4n^2-14n+13}{n-1} <1  \iff  j > \dfrac{(2n-3)(4n^2-15n+14)}{3(2n-3)(n-2)+2(n-1)} =f(n)
		 $$
		 Suppose $g(n) = \frac{7n}{5}$. Note that $g'(n) - f'(n) > 0$ for $n > 2$. Hence, by \cite[Lemma 3.2(1)]{Abiad-Brimkov-Hayat-Khramova-Koolen-dist-char-poly}, it follows that $\ell \leq  \left\lfloor\dfrac{7n}{5}\right\rfloor$.
	 \end{proof}

	 Note that Theorem \ref{th: peak-steiner-path} only provides an 
	 upper bound for the peak location of the unimodal sequence 
	 ${|a_0|,\ldots, |a_{2n-4}|}$ associated to a path $P_n$. One can 
	 use the approach mentioned in 
	 \cite[Lemma 3.2(2)]{Abiad-Brimkov-Hayat-Khramova-Koolen-dist-char-poly} 
	 to get a lower bound on the peak location.  However, to use 
	 \cite[Lemma 3.2(2)]{Abiad-Brimkov-Hayat-Khramova-Koolen-dist-char-poly}, 
	 a suitable estimate of $a_{2n-4}$ and $a_{2n-5}$ is required. 
	 In the case of $P_n$, even if $a_{2n-4}$ and $a_{2n-5}$ are 
	 known exactly, 
	 \cite[Lemma 3.2(2)]{Abiad-Brimkov-Hayat-Khramova-Koolen-dist-char-poly} 
	 does not seem to provide a lower bound on the peak location, and 
	 so we do not discuss this aspect in this paper.  Using 
	 SageMath \cite{sage},  when  $5<n<15$, the actual peak location 
	 for $P_n$ seems to be $n-1$.  We record this as a conjecture.

	 \begin{conjecture}
		 For a path $P_n$ on $n> 5$ vertices, if $\charpoly_{\DPn}(x) = \sum_{i=0}^{2n-3}a_ix^i$ and $|a_\ell| = \max\{|a_0|,|a_1|, \ldots, |a_{2n-4}|\}$, then $\ell  =n-1$.
	 \end{conjecture}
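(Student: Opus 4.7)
The plan is to combine the unimodality of $(|a_0|,\ldots,|a_{2n-4}|)$ already established in Theorem \ref{thm:unimodal-log-conc-2steiner-dist} with explicit, tight estimates on the three coefficients $|a_{n-2}|,|a_{n-1}|,|a_n|$. Once the strict inequalities $|a_{n-1}|>|a_{n-2}|$ and $|a_{n-1}|>|a_n|$ are proved, unimodality immediately forces the peak to sit at $\ell=n-1$.

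The principal tool is the explicit inverse from Theorem \ref{th:det-inv-st-path}, namely $\DPn^{-1}=-L+\tfrac{1}{n-1}\v\v^t$.  By Theorem \ref{coeff-same-sign} all coefficients $a_0,\ldots,a_{2n-4}$ are negative (the trace of $\DPn$ equals $3n-5>0$), so $|a_k|$ is $(-1)^{2n-3-k}$ times the sum $S_{2n-3-k}$ of $\DPn$'s principal minors of size $2n-3-k$.  The Jacobi complementary-minor identity then yields
\[
|a_k| \;=\; (n-1)\left|\sum_{\substack{\gamma\subseteq B\\|\gamma|=k}}\det\bigl(\DPn^{-1}[\gamma]\bigr)\right|,
\]
and the matrix-determinant lemma, applied to the rank-one perturbation $\DPn^{-1}[\gamma]=-L[\gamma]+\tfrac{1}{n-1}\v[\gamma]\v[\gamma]^t$, gives
\[
\det\bigl(\DPn^{-1}[\gamma]\bigr)=(-1)^{|\gamma|}\det\bigl(L[\gamma]\bigr)+\frac{(-1)^{|\gamma|-1}}{n-1}\,\v[\gamma]^t\operatorname{adj}\bigl(L[\gamma]\bigr)\,\v[\gamma].
\]
As noted in Remark \ref{remark:laplacian-like}, $L$ is the graph Laplacian of an auxiliary path on $2n-3$ vertices labelled by the basis elements of $B$ in their natural order, so $\det(L[\gamma])$ admits the forest-matrix-theorem description $\prod_p (j_{p+1}-j_p)$, where $\{j_1<\cdots<j_{2n-3-|\gamma|}\}=\gamma^c$. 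Summing over $k$-subsets $\gamma$ therefore reduces to a tractable generating-function computation from which $|a_{n-2}|,|a_{n-1}|,|a_n|$ can be extracted as explicit polynomials in $n$ and the two strict inequalities verified by direct algebra.

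The main obstacle will be controlling the rank-one correction $\v[\gamma]^t\operatorname{adj}(L[\gamma])\v[\gamma]$ after summing over $\gamma$. By Remark \ref{remark:v-for-path}, $\v$ is $+1$ on every $f_j$, $-1$ on every interior $e_i$, and $0$ on the two pendant edges $e_1,e_{n-1}$, so the sum is a signed combinatorial expression whose cancellation pattern is intricate precisely at the critical size $k=n-1$; the cleanest route is likely to find an involution pairing contributions of adjacent $e_i$'s and $f_i$'s that collapses the sum to a closed form. A secondary difficulty is that the margin $|a_{n-1}|-|a_{n-2}|$ is expected to be of lower order in $n$ than $|a_{n-1}|$ itself, so the asymptotic expansions must be pushed to sufficient precision, well beyond what the Newton-type estimate of \cite[Lemma 3.2]{Abiad-Brimkov-Hayat-Khramova-Koolen-dist-char-poly} used in Theorem \ref{th: peak-steiner-path} can deliver.
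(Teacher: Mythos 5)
The statement you are attempting to prove is stated in the paper only as a \emph{conjecture}; the paper offers no proof at all, merely numerical evidence from SageMath for $5 < n < 15$. So there is no argument in the paper to compare against, and what is needed from you is an actual complete proof.

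Your structural reductions are correct as far as they go. Theorem \ref{coeff-same-sign} (with $\Tr(\DPn)=3n-5>0$ and $\Inertia(\DPn)=(1,2n-4,0)$) does give $a_k<0$ for $0\le k\le 2n-4$, so $|a_k|=-a_k$; Jacobi's complementary-minor identity combined with $\det\DPn=n-1$ does convert the problem to principal minors of $\DPn^{-1}$; the matrix-determinant lemma applied to the rank-one form $\DPn^{-1}=-L+\tfrac{1}{n-1}\v\v^t$ is legitimate; and $L$ is indeed the Laplacian of a path on $2n-3$ vertices, so $\det(L[\gamma])=\prod_{p=1}^{r-1}(j_{p+1}-j_p)$ with $\gamma^c=\{j_1<\cdots<j_r\}$ by the all-minors matrix-tree theorem. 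And the overall logic — unimodality reduces the conjecture to the two strict inequalities $|a_{n-1}|>|a_{n-2}|$ and $|a_{n-1}|>|a_n|$ — is sound.

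What you have, however, is a plan and not a proof. The entire burden of the argument lies in evaluating
$\sum_{|\gamma|=k}\big[(-1)^{|\gamma|}\det(L[\gamma])+\tfrac{(-1)^{|\gamma|-1}}{n-1}\v[\gamma]^t\operatorname{adj}(L[\gamma])\v[\gamma]\big]$
for $k=n-2,n-1,n$, i.e.\ subsets of size roughly half of $2n-3$, and you do not carry this out: you explicitly flag the adjugate term as the ``main obstacle'' and only speculate that an involution ``is likely'' to collapse it. Without closed-form (or at least sufficiently sharp two-sided) expressions for $|a_{n-2}|,|a_{n-1}|,|a_n|$, the two strict inequalities are unverified and the conjecture remains exactly as open as it was. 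Your own secondary observation — that the margin $|a_{n-1}|-|a_{n-2}|$ is expected to be of lower order than $|a_{n-1}|$ itself, so crude estimates (in particular the Newton-type bound of \cite[Lemma 3.2]{Abiad-Brimkov-Hayat-Khramova-Koolen-dist-char-poly}) cannot resolve it — is an honest admission that the part of the argument you have not done is precisely the hard part. In short: the reduction is fine, but nothing in the proposal actually establishes $\ell=n-1$.
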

	 
	 We further note that $|a_{2n-4}|$ is the trace of $\DPn$, and 
	 hence $|a_{2n-4}|=2(n-2) +(n-1) =3n-5$.  One needs to find principal 
	 minors of a suitable size to estimate $a_{2n-5}$. Again, by looking
	 at the data from SageMath, we make the following conjecture 
	 that provides an estimate for $a_{2n-5}$.

	 \begin{conjecture}
		 For a path $P_n$ on $n> 5$ vertices, if $\charpoly_{\DPn}(x) = \sum_{i=0}^{2n-3}a_ix^i$, then 
		 $a_{2n-5}  =- \frac{1}{6} (n - 1) (n - 2) (2n^2 + 6n - 15).$
	 \end{conjecture}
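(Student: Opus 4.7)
The plan is to prove the conjecture by reducing the coefficient $a_{2n-5}$ to a combinatorial sum over $\DPn$, and then evaluating that sum using the special structure of the path $P_n$. Since $\DPn$ has size $r=2n-3$, the classical identity relating coefficients of the characteristic polynomial to principal minors gives
\[
a_{2n-5} \;=\; (-1)^2\, p_2(\DPn) \;=\; p_2(\DPn),
\]
where $p_2(\DPn)$ is the sum of all $2 \times 2$ principal minors of $\DPn$. Expanding each minor as $M_{xx}M_{yy}-M_{xy}^2$ and using symmetry of $\DPn$ yields the identity $p_2(M)=\tfrac{1}{2}\bigl[(\Tr M)^2-\Tr(M^2)\bigr]$, so the task reduces to computing these two traces. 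The first is immediate: since $\DD_2(P_n)[e_i,e_i]=1$ and $\DD_2(P_n)[f_j,f_j]=2$, we obtain $\Tr(\DPn)=(n-1)+2(n-2)=3n-5$, which agrees with the value used in Theorem \ref{th: peak-steiner-path}.

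The substantive step is computing $\Tr(\DPn^2)=\sum_{x,y\in B}\DPn[x,y]^2$. The idea is to exploit a feature special to paths: for any two pairs $x,y$ of vertices of $P_n$, the $2$-Steiner distance $\DPn[x,y]$ equals the length of the shortest sub-path of $P_n$ containing all four vertices. Identifying $e_i$ with the interval $[i,i+1]$ and $f_j$ with $[j,j+2]$, and writing $\ell(\cdot)$ and $r(\cdot)$ for the left and right endpoints, this gives the closed form $\DPn[x,y]=\max(r(x),r(y))-\min(\ell(x),\ell(y))$. Partitioning $B=E\cup F$ into the unit-length and length-two intervals, I split $\Tr(\DPn^2)=S_{EE}+2S_{EF}+S_{FF}$, where direct substitution gives
\[
S_{EE}=\sum_{i,j=1}^{n-1}(|i-j|+1)^2,\qquad S_{FF}=\sum_{i,j=1}^{n-2}(|i-j|+2)^2,
\]
and $S_{EF}$ requires a three-case analysis on whether $i\le j$, $i=j+1$, or $i\ge j+2$ (with respective entries $j+2-i$, $2$, and $i+1-j$). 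Each of these three sums reduces to standard power-sum identities such as $\sum_{k=1}^{m-1} k(m-k)=\tfrac{m(m^2-1)}{6}$ and $\sum_{k=1}^{m-1} k^2(m-k)=\tfrac{m^2(m^2-1)}{12}$, and consolidating the pieces should give $\Tr(\DPn^2)=\tfrac{1}{3}(2n^4-2n^2-33n+45)$.

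Substituting both traces into the identity for $p_2$ then yields
\[
a_{2n-5} \;=\; \tfrac{1}{2}\Bigl[(3n-5)^2-\tfrac{1}{3}(2n^4-2n^2-33n+45)\Bigr] \;=\; -\tfrac{1}{6}\bigl(2n^4-29n^2+57n-30\bigr),
\]
and the factorization $2n^4-29n^2+57n-30=(n-1)(n-2)(2n^2+6n-15)$, verified by observing that $n=1,2$ are roots and performing polynomial division, recovers the conjectured formula. The main obstacle is the bookkeeping for $S_{EF}$: the piecewise formula for $\DPn[e_i,f_j]$ splits the summation into three ranges whose contributions must be collected carefully, and a single off-by-one error in the boundary case $i=j+1$ would spoil the final coefficient. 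All other steps are mechanical algebra.
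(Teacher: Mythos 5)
Your proposal actually \emph{proves} a statement that the paper leaves as an open conjecture. The paper supports the formula for $a_{2n-5}$ only with SageMath data for $5<n<15$ and a remark that ``one needs to find principal minors of a suitable size to estimate $a_{2n-5}$''; it supplies no argument. Your plan carries out exactly the idea the authors gesture at, and I have checked that it closes.

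The reduction is sound: since $\DPn$ has order $r=2n-3$, the coefficient $a_{r-2}=a_{2n-5}$ of $\charpoly_{\DPn}(x)=\det(xI-\DPn)$ equals the sum $p_2(\DPn)$ of all $2\times2$ principal minors, and by symmetry $p_2=\tfrac12[(\Tr\DPn)^2-\Tr(\DPn^2)]$. The trace is $(n-1)\cdot1+(n-2)\cdot2=3n-5$, matching the value the paper uses. Your closed form $\DPn[x,y]=\max(r(x),r(y))-\min(\ell(x),\ell(y))$ is correct because on a path the minimal subtree spanning a set is the interval from its minimum to its maximum; this gives $|i-j|+1$ for $e_i,e_j$, $|i-j|+2$ for $f_i,f_j$, and the three-case formula $j+2-i$, $2$, $i+1-j$ for $e_i,f_j$. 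Evaluating the three sums with the power-sum identities you cite, one finds
\[
S_{EE}=\frac{n^2(n^2-1)}{6}-(n-1),\qquad
S_{EF}=\frac{n^2(n^2-1)}{6}-2(n-1),\qquad
S_{FF}=\frac{n^4-n^2-36n+60}{6},
\]
so $\Tr(\DPn^2)=S_{EE}+2S_{EF}+S_{FF}=\tfrac13(2n^4-2n^2-33n+45)$, confirming your intermediate claim. Substituting into $p_2$ and factoring gives $a_{2n-5}=-\tfrac16(n-1)(n-2)(2n^2+6n-15)$ exactly as conjectured, and I verified the result numerically against $\DP3$ and $\DP4$. The only caveat is that you phrase the $S_{EF}$ evaluation and the final factorization as ``should give'' and ``verified by observing''; written up as a proof these steps must be carried out explicitly, but they do go through without surprises. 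You should present this as a theorem, not as a verification of a known result, since the paper proves nothing here.
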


	%section 2

		%%%%%%%%%%%%%%%%%%%%%%%%
		% Bibliography
		%%%%%%%%%%%%%%%%%%%%%%%%
\bibliographystyle{abbrv}		 
		 \newcommand{\etalchar}[1]{$^{#1}$}

	%\section*{References}
	
	%\bibliographystyle{alpha}
	%%\bibliographystyle{apalike}%
%	\bibliography{bibdatabase}

\begin{thebibliography}{AAB{\etalchar{+}}18}
		 	
		 	\bibitem[AAB{\etalchar{+}}18]{aalipour-hogben-etal-conjecture-graham-lovasz}
		 	G.~Aalipour, A.~Abiad, Z.~Berikkyzy, L.~Hogben, F.~H.~J. Kenter, J.~C.-H. Lin,
		 	and M~Tait.
		 	\newblock Proof of a {C}onjecture of {G}raham and {L}ov{\'a}sz concerning
		 	{U}nimodality of {C}oefficients of the {D}istance {C}haracteristic
		 	{P}olynomial of a {T}ree.
		 	\newblock {\em Electronic Journal of Linear Algebra}, 34:373--380, 2018.
		 	
		 	\bibitem[ABS{\etalchar{+}}23]{Abiad-Brimkov-Hayat-Khramova-Koolen-dist-char-poly}
		 	A. Abiad, B. Brimkov, S. Hayat, A.~P. Khramova, and J.~H. Koolen.
		 	\newblock Extending a conjecture of {G}raham and {L}ov{\'a}sz on the distance
		 	characteristic polynomial.
		 	\newblock {\em Linear Algebra and its Applications}, To
		 	appear:https://doi.org/10.1016/j.laa.2023.03.027, 2023.
		 	
		 	\bibitem[AS22]{aliazimi-siva-steiner-2-dist}
		 	A. Azimi and S.~Sivasubramanian.
		 	\newblock The $2$-Steiner distance matrix of a tree.
		 	\newblock {\em Linear Algebra and its Applications}, 655:65--86, 2022.
		 	
		 	\bibitem[BS20]{bapat-siva-snf-4PC}
		 	R.~B. Bapat and S.~Sivasubramanian.
		 	\newblock Smith {N}ormal {F}orm of a distance matrix inspired by the four-point
		 	condition.
		 	\newblock {\em Linear Algebra and its Applications}, 603:301--312, 2020.
		 	
		 	\bibitem[BH12]{brouwers-haemers-spectra}
		 	A.~E. Brouwers and W.~H. Haemers.
		 	\newblock {\em {Spectra of Graphs}}.
		 	\newblock Universitext, Springer Verlag, 2012.
		 	
		 	\bibitem[Br{\"a}15]{branden-unimodality_log_concavity}
		 	P. Br{\"a}nd{\'e}n.
		 	\newblock Unimodality, {L}og-concavity, {R}eal-rootedness and {B}eyond.
		 	\newblock In M~Bona, editor, {\em Handbook of Enumerative Combinatorics},
		 	chapter~7. Chapman \& Hall \/ CRC Press, 2015.
		 	
		 	\bibitem[Bun74]{buneman-4PC}
		 	P. Buneman.
		 	\newblock A {N}ote on the {M}etric {P}roperties of {T}rees.
		 	\newblock {\em Journal of Combinatorial Theory Series B}, 17:48--50, 1974.
		 	
		 	\bibitem[Col89]{collins-disproof-graham-lovasz-peak}
		 	K.~L. Collins.
		 	\newblock On a conjecture of {G}raham and {L}ov{\'a}sz about distance matrices.
		 	\newblock {\em Discrete Applied Mathematics}, 25:27--35, 1989.
		 	
	
		 	\bibitem[DL97]{deza-laurent-geometry-cuts-metrics}
		 	M.~Deza and M.~Laurent.
		 	\newblock {\em Geometry of Cuts and Metrics}.
		 	\newblock Springer Verlag, volume 15 edition, 1997.
		 	
		 	\bibitem[EGG76]{edelberg-garey-graham}
		 	M.~Edelberg, M.~R. Garey, and R.~L Graham.
		 	\newblock On the distance matrix of a tree.
		 	\newblock {\em Discrete Mathematics}, 14:23--39, 1976.
		 	
		 	\bibitem[GL78]{graham-lovasz}
		 	R.~L. Graham and L.~Lovasz.
		 	\newblock Distance matrix polynomials of trees.
		 	\newblock {\em Advances in Mathematics}, 29(1):60--88, 1978.
		 	
		 	\bibitem[HJ12]{horn-johnson-matrix-analysis}
		 	R.~A. Horn and C.~R. Johnson.
		 	\newblock {\em Matrix Analysis}.
		 	\newblock Cambridge University Press, 2012.
		 	
\bibitem[Pre71]{prekopa1971logarithmic} A. Prekopa.  Logarithmic concave measures and related topics. In: Proceedings of the Fifth Berkeley Symposium on Mathematical Statistics and Probability, Volume 1: Statistics, University of California Press, 513–525, 1971.
		 	
		 	\bibitem[Sta89]{stanley1989log} R.P. Stanley. Log-concave and unimodal sequences in algebra, combinatorics, and geometry. Annals of the New York Academy of Sciences, 576, 500–535, 1989.
		 	
		 		\bibitem[Sage21]{sage}
		 	The~Sage Developers.
		 	\newblock {\em SageMath, the {S}age {M}athematics {S}oftware {S}ystem
		 		({V}ersion 9.4)}, 2021.
		 	\newblock {\tt http://www.sagemath.org}.
		 	
		 \end{thebibliography}

\end{document}